\documentclass{amsart}
\usepackage{amssymb}
\usepackage{mathrsfs}
\usepackage{stmaryrd}
\usepackage{bbm}
\usepackage{color}
\usepackage{oldgerm}
\usepackage[english]{babel}
\usepackage[T1]{fontenc}
\usepackage[latin1]{inputenc}
\usepackage[all]{xy}
\usepackage{hyperref}
\usepackage[all]{xy}
\usepackage{extarrows}

\newtheorem{thm}{Theorem} [section]
\newtheorem{cor}[thm]{Corollary}
\newtheorem{lem}[thm]{Lemma}

\newtheorem{prop}[thm]{Proposition}

\theoremstyle{definition}

\theoremstyle{remark}
\newtheorem{rmk}[thm]{Remark}

\numberwithin{equation}{section}
\numberwithin{equation}{section}
\DeclareMathOperator{\ord}{ord}

\DeclareMathOperator{\Frob}{Frob}
\DeclareMathOperator{\tr}{Tr}
\DeclareMathOperator{\Sym}{Sym}
\DeclareMathOperator{\Fil}{Fil}

\usepackage{amsmath}
\usepackage{amsthm}
\usepackage{mathrsfs}
\usepackage{amsfonts}

\allowdisplaybreaks

\begin{document}
	
	\title {Symmetric power $L$-functions of a weighted hyper-Kloosterman family}
	
	\author[B.L. Wei]{Bolun Wei}
	\address{Institute for Math \& AI, Wuhan, Wuhan University, Wuhan 430072, P.R.China}
	\email{bolunwei@whu.edu.cn}

	\date{\today}
	\keywords{Symmetric power, Dwork theory, $L$-function.}
	\subjclass[2010]{Primary 11T23, 11S40. }

	\maketitle
	\begin{abstract}
		As a natural generalization of the classical hyper-Kloosterman sums, we study the \(k\)-th symmetric-power \(L\)-function associated with the weighted hyper-Kloosterman family
		\[
		x_1^m+x_2+\cdots+x_n+\frac{t}{x_1\cdots x_n}
		\]over a finite field of characteristic \(p\). Under the assumptions $p>2$, \(p\nmid m(mn+1)\) and \(d_k(n,m,p)=0\), we compute the cohomology of the corresponding symmetric-power Dwork complex and obtain an explicit uniform lower bound for its $q$-adic Newton polygon. The comparison polygon has slopes \(i/m\), with multiplicities determined by coefficients of a polynomial
		\[
		\frac{(1-T^m)R(T)}{1-T^{mn+1}}.
		\]The resulting symmetric power \(L\)-function is a polynomial of degree at most
		\[
		\frac{m}{mn+1}\binom{mn+k}{mn}.
		\]
	\end{abstract}

	\section{Introduction}
	The hyper-Kloosterman sums are exponential sums associated to the $n$-variable Laurent polynomial
	$$Kl_n(t,\mathbf{x})=x_1+\cdots+x_n+\frac{t}{x_1\cdots x_n},$$
	where $t$ is a parameter and $\mathbf{x}=(x_1 ,\cdots, x_n)$.
	Using the $p$-adic method, Robba \cite{Ro86} first studied its symmetric power $L$-function for the case $n=1$.  As is well known, the $k$-th symmetric power $L$-function plays a crucial role in Wan's proof of the meromorphy of the unit root $L$-function. This has motivated further study of such $L$-functions for more general families.
	
	As a generalization of the classical hyper-Kloosterman family, Haessig and Sperber \cite{HS172} introduced a  generalized family constructed by replacing $x_1+\cdots+x_n$ with $f(x)$ and $\frac{t}{x_1\cdots x_n}$ with $tx^u$.
	They showed that the symmetric power $L$-functions of this family are rational and provided estimates of their degrees and total degrees. In this paper, we give a more explicit description of $L$-functions for the following weighted hyper-kloosterman family
	\begin{equation}
		Kl_{n,m}(t;\mathbf{x})=x_{1}^{m}+x_{2}+\cdot\cdot\cdot+x_{n}+\frac{t}{x_{1}x_{2}\cdot\cdot\cdot x_{n}}
	\end{equation}
	where $t$ is a parameter and $\mathbf{x}=(x_1,\ldots,x_n)$. We describe its symmetric power $L$-function precisely.
	
	Let $p>2$ be a prime and $\mathbb{F}_q$ be the finite field with $q=p^a$ elements. Let $\overline{\mathbb{F}}_q$ be the algebraic closure of $\mathbb{F}_q$. 
	Fix $\bar{t}\in\overline{\mathbb{F}}_{q}^{*}$. Denote by $\mathbb{F}_{q_{\bar{t}}}$ the extension of $\mathbb{F}_{q}$ by adjoining $\bar{t}$.
	Then let $\deg(\bar{t})=[\mathbb{F}_{q_{\bar{t}}}:\mathbb{F}_{q}]$ and  $q_{\bar{t}}:=q^{\deg(\bar{t})}$.
	For positive integer $s$, we denote $\mathbb{F}_{q_{\bar{t}}^{s}}$ the degree $s$ extension field of $\mathbb{F}_{q_{\bar{t}}}$, and $\tr_{\mathbb{F}_{q_{\bar{t}}^{s}}/\mathbb{F}_{q}}$ the trace map of $\mathbb{F}_{q_{\bar{t}}^{s}}$ over $\mathbb{F}_{q}$.
	Let $\zeta_{p}$ be a primitive $p$-th root of unity in $\bar{\mathbb{Q}}_p$.
	Let $\Theta$ be a non-trivial additive character on $\mathbb F_q$ defined by
	$$\Theta(\bar a)=\zeta_p^{\text{Tr}_{\mathbb F_q/\mathbb F_p}(\bar a)}$$
	Define the toric exponential sum associated with $Kl_{n,m}(\bar{t},\mathbf{x})$ by
	$$S(Kl_{n,m},s,\bar{t}):=\sum_{\mathbf{x}\in(\mathbb{F}_{q_{\bar{t}}^{s}}^{*})^{n}}\Theta\circ{\rm tr}_{\mathbb{F}_{q_{\bar{t}}^{s}}/\mathbb{F}_{q}}(x_{1}^{m}+x_{2}+\cdot\cdot\cdot+x_{n}+\frac{\bar{t}}{x_{1}x_{2}\cdot\cdot\cdot x_{n}})$$
	and the associated $L$-function
	$$L(Kl_{n,m},\bar{t},T):=\exp(\sum_{s=1}^{\infty}S(Kl_{n,m},s,\bar{t})\frac{T^{s}}{s}).$$
	
	When $p\nmid m$, it follows from \cite{{AS1}} that the Laurent polynomial $Kl_{n,m}$ is nondegenerate with dimension of its Newton polytope $\triangle=n$, volume of which $Vol(\triangle)=(mn+1)/n!$. Thus by [\cite{AS1}, Corollary 3.14] we can write the $L$-function as:
	$$L(Kl_{n,m},\bar{t},T)^{(-1)^{n+1}}=(1-\pi_{0}(\bar{t})T)(1-\pi_{1}(\bar{t})T)\cdot\cdot\cdot(1-\pi_{mn}(\bar{t})T)\in\mathbb{Z}[\zeta_{p}][T].$$
	Moreover in the case where $p\equiv 1\ \mathrm{mod}\ m$, by [\cite{WY}, Theorem 1.5], we can rearrange the order of those reciprocal roots so that $\ord_{q_{\bar{t}}} \pi_{i}(\bar{t})=i/m$ for $0\leq i\leq nm$, here $\ord_{q_{\bar{t}}}$ is the normalized valuation with $\ord_{q_{\bar{t}}}(q_{\bar{t}})=1$. Using these reciprocal roots, we define the symmetric power $L$-function as follows.
	Let $k$ be a positive integer, and $p\nmid m$. The $k$-th symmetric power $L$-function of $Kl_{n,m}$ is defined by
	\begin{equation}
		L({\rm Sym}^{k}Kl_{n,m}/\mathbb{F}_{q},T):=\prod_{\bar{t}\in|\mathbb{G}_{m}/\mathbb{F}_{q}|}\prod_{\begin{subarray}{c}
				i_{0}+i_{1}+\cdot\cdot\cdot+i_{mn}=k \\
				i_{l}\geq 0\ \mathrm{for}\ 0\leq l\leq mn
		\end{subarray}}\frac{1}{1-\pi_{0}(\bar{t})^{i_{0}}\pi_{1}(\bar{t})^{i_{1}}\cdot\cdot\cdot\pi_{mn}(\bar{t})^{i_{mn}}T^{\deg(\bar{t})}},
	\end{equation}
	where the first product runs over all closed points of the algebraic torus over $\mathbb{F}_{q}$. 
	
	Using the $p$-adic method, Robba \cite{Ro86} showed the
	$k$-th symmetric power $L$-functions of $Kl_{1,1}$ are always polynomials and conjectured their degrees. Using $\ell$-adic methods,
	Fu and Wan \cite{FW1,FW2,FW3} generalized Robba's result to $Kl_{n,1}$. They proved that in most cases the $L$-function is a polynomial with integer coefficients and obtained an explicit formula of its degree. In \cite{FW1}, they asked whether a uniform lower bound exists for the Newton polygon of these $L$-functions. Haessig \cite{H17} answered this question by using Dwork theory for the case $Kl_{1,1}$. Fres\'{a}n, Sabbah and Yu \cite{FSY} subsequently gave a Hodge-theoretic interpretation in the classical setting by constructing the relevant motives and computing their Hodge numbers via irregular Hodge theory.
	For the classical hyper-Kloosterman family $Kl_{n,1}$, Haessig and Sperber \cite{HS24} proved that this $L$-function also has a uniform lower bound.
	
	For the $k$-th symmetric power $L$-functions of this weighted hyper-kloosterman family $Kl_{n,m}(t,\mathbf{x})$, we will determine their degrees  and show their $q$-adic Newton polygons have uniform lower bounds under certain restrictions, which can be viewed as a continuation of `Newton above Hodge' principle.
	To state our results explicitly, we first introduce some notations.
	
	Suppose $p\nmid mn+1$, let $\bar{\zeta}_{mn+1}$ be a primitive $(mn+1)$-th root of unity in $\overline{\mathbb{F}}_{p}$ and define
	\begin{equation} \label{(1.3)}
		\begin{array}{c}
			I_{k}:=\{\underline{i}=(i_{0},\cdot\cdot\cdot,i_{mn})\in\mathbb{Z}_{\geq0}^{mn+1}|i_{0}+i_{1}+\cdot\cdot\cdot+i_{mn}=k\}  \\ \\
			d_{k}(n,m,p):=
			\#\{\underline{i}=(i_{0},\cdot\cdot\cdot,i_{mn})\in I_{k}|\sum_{j=0}^{mn}i_{j}\bar{\zeta}_{mn+1}^{j}= 0\ \mathrm{in}\ \overline{\mathbb{F}}_{p}\}.
		\end{array}
	\end{equation}
	Next, set
	$$R(T):=\sum_{\underline{i}\in I_{k}}T^{\sum_{j=0}^{mn}j\cdot i_{j}}$$
	write
	$$\frac{R(T)(1-T^{m})}{1-T^{mn+1}}=\sum_{i\geq 0}h_{i}(n,m,k)T^{i}.$$
	Under the assumption $d_{k}(n,m,p)=0$, one note that $h_{i}(n,m,k)$ are actually nonnegative integers and infinitely many of which equals zero by the remark \ref{rmk4.5}. Let $q^{1/m}$ be a fixed $m$-th root of $q$. We then write down our main result.
	
	\begin{thm} \label{thm011}
		Let $p>2$ be a prime number and $p\nmid m(mn+1)$, $d_{k}(n,m,p)=0$. Then $L({\rm Sym}^{k}Kl_{n,m}/\mathbb{F}_{q},T)$ is a polynomial in $1+T\mathbb{Z}[\zeta_{p}][T]$ of degree at most $\frac{m}{mn+1}\binom{mn+k}{mn}$, and its $q$-adic Newton polygon lies on or above the $q$-adic Newton polygon of $\prod(1-q^{i/m}T)^{h_{i}(n,m,k)}$.
	\end{thm}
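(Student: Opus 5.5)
Our approach follows the Dwork-theoretic strategy of Haessig \cite{H17} and Haessig--Sperber \cite{HS172,HS24}. We build a $p$-adic Dwork complex for the family and realize the symmetric power $L$-function through a Frobenius acting on a symmetric-power module. The first step is to fix a Dwork splitting function $\theta$ and take the spaces $B$ of overconvergent series supported on the cone of the Newton polytope of $x_1^m+x_2+\cdots+x_n+t/(x_1\cdots x_n)$, weighted so that $x_1$ has weight $1/m$. We form the relative Frobenius $\alpha_{\bar t}$ on the fibres. The relative cohomology $H^n$ over the $t$-line is then a free module of rank $mn+1$, with a basis $\{\gamma^{j/m}x^{u_j}\}$, $0\le j\le mn$. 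We take the weights $j/m$ from the weight filtration, consistent with the slopes $\ord\pi_i=i/m$ quoted from \cite{WY}. Next we set $\mathcal{S}_k=\Sym^k H^n$, with basis indexed by $\underline i\in I_k$ and weight $\sum j i_j/m$. The Frobenius $\alpha$ induces $\Sym^k\alpha$, a completely continuous operator on $\mathcal{S}_k\otimes\mathcal{C}(t)$, where $\mathcal{C}(t)$ is a space of overconvergent series in $t$. Dwork's trace formula then gives
\[L(\Sym^k Kl_{n,m},T)=\det(1-\Sym^k\alpha\,T)^{\delta},\]
with $\delta$ the operator $g(T)\mapsto g(T)/g(qT)$ for the one-dimensional torus. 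So $L$ is the alternating product over the two-term complex $\mathcal{S}_k\otimes\mathcal{C}(t)\xrightarrow{D}\mathcal{S}_k\otimes\mathcal{C}(t)$, where $D$ is built from the Gauss--Manin connection $t\,d/dt+\pi t\,\cdots$.

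The central step is computing the cohomology of this symmetric-power Dwork complex. We show $H^0=0$, and that $H^1$ is free of finite rank with a monomial basis $t^{e}\,\underline{\gamma}^{\underline i}$ chosen by a weight filtration. The connection matrix on $H^n$ is essentially a companion-type (cyclic) matrix: $t\,d/dt$ shifts $j\mapsto j+1$, and on the top element the coefficient $t$ brings it back to $j=0$. This shift accounts for the factor $T^{mn+1}$ in $R(T)/(1-T^{mn+1})$. Passing to the associated graded, the connection reduces mod $p$ to multiplication by the ``leading form'' $\sum_j i_j\bar\zeta_{mn+1}^j$ after diagonalizing the cyclic matrix over $\mathbb F_p(\bar\zeta_{mn+1})$; this diagonalization is where we use $p\nmid mn+1$. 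The hypothesis $d_k(n,m,p)=0$ is exactly what makes this graded map injective with cokernel of the predicted size. Its Hilbert series is then $R(T)(1-T^m)/(1-T^{mn+1})=\sum h_iT^i$, with $T$ recording weight times $m$. The $(1-T^m)$ factor encodes the quotient by the image of $D$, which raises weight by $1$, i.e.\ by $m$ in $T$-units. A lifting argument, with $p$-adic estimates on the homotopy, should pass from the graded statement to $H^1$ itself. This is the step we expect to be the main obstacle: controlling the reduction mod $p$ of the deformed connection uniformly in $k$, and showing the weight filtration is strictly compatible. We would first try it for $k=1$ and then pass to $\Sym^k$ by tensor products.

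With this in hand, $\delta$ collapses the alternating product to a single determinant, so $L=\det(1-\bar\alpha T\mid H^1)$ up to the sign convention. It is therefore a polynomial whose degree is $\operatorname{rank}H^1=\sum_i h_i$. Evaluating $R(T)(1-T^m)/(1-T^{mn+1})$ at $T\to1$ gives $\frac{m}{mn+1}\binom{mn+k}{mn}$ as an upper bound. The degree is only claimed as ``at most'', since $H^1$ may be computed over a slightly larger space. Integrality in $\mathbb Z[\zeta_p]$ follows because the reciprocal roots $\pi_i$ are algebraic integers and the product is Galois-stable, together with the Euler product definition. Finally, the Frobenius $\bar\alpha$ maps the weight-$w$ part into $q^{w}$-divisible pieces relative to the chosen basis: the standard Dwork estimate $\ord\,\theta$-coefficients $\ge$ weight$\cdot(p-1)/p$, iterated through $a$ steps. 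The usual Hodge-polygon argument then shows the Newton polygon lies above that of $\prod(1-q^{i/m}T)^{h_i}$; the hypothesis $p>2$ is used in the splitting-function estimates.
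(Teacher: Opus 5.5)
\textbf{Verdict.} Your outline follows the paper's architecture, and the algebraic core is right, but the Newton polygon half has a genuine gap: the two $p$-adic estimates that produce the bound are missing or misstated.

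\textbf{What works.} For the weight filtration, the associated graded of $\overline{\nabla}^{(1)}=t\frac{d}{dt}+\phi_{\overline G}$ is the Higgs map $\phi_{\overline G}$. On an eigenbasis it acts on $v^{\underline i}$ by $\bigl(\sum_j i_j\bar\zeta_{mn+1}^j\bigr)m^{1/(mn+1)}t^{m/(mn+1)}$. So $d_k(n,m,p)=0$ gives injectivity, and the cokernel has Hilbert series $(1-T^m)R(T)/(1-T^{mn+1})$. A standard lifting then gives $H^0=0$ and $H^1$ free of rank $\frac{m}{mn+1}\binom{mn+k}{k}$, hence the degree bound. Three small corrections:
\begin{itemize}
\item $\overline G$ moves the index by $m$ (the weight by $1$), not $j\mapsto j+1$.
\item Because of the factor $t^{m/(mn+1)}$, you must diagonalize over $\overline{\mathbb F}_q(t^{1/(mn+1)})$, or specialize $t=1$, not over $\mathbb F_p(\bar\zeta_{mn+1})$.
\item $\det(1-\bar\beta_{k,a}T\mid H^1)$ has degree equal to the rank only if $\bar\beta_{k,a}$ is invertible on $H^1$, which is not known. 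That, not ``a slightly larger space'', is why only an upper bound is claimed.
\end{itemize}

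\textbf{First gap: the estimates you would feed in.} An estimate of the shape $\ge\frac{p-1}{p}\cdot(\text{weight})$, as you state it, would only put the Newton polygon above the one with slopes $\frac{p-1}{p}\cdot\frac im$. The sharp input is Lemma~\ref{Lemma 3.9}, $\widetilde A^{(k)}(\underline j,\underline i)\in\mathcal O_p(\frac{mn+1}{mp};\omega(\underline j))$. It comes from the Artin--Hasse splitting $\theta(T)=E(\gamma T)$, with $\ord_p\theta_j\ge j/(p-1)$.

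With that choice the connection is not $t\frac{d}{dt}+\pi t/(x_1\cdots x_n)$ but $t\frac{d}{dt}+\sum_{j\ge0}\gamma_jp^jt^{p^j}/(x_1\cdots x_n)^{p^j}$. So on $H^n$ it differs from $t\frac{d}{dt}+G_0$. Lemma~\ref{Lemma 2.11} shows the discrepancy on $e_l$ is a sum of terms $p^{\rho(\mathbf r)}b(j,l;\mathbf r)e_j$ of weight at most $\rho(\mathbf r)+1+l/m$. Pushing this through the Leibniz rule (Lemma~\ref{Lemma 3.7}) and iterating gives the estimate your ``main obstacle'' leaves open, Theorem~\ref{Theorem 3.8}. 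For $\xi\in\Fil^N\mathcal S_k$, the coordinates on the $H^1$-basis satisfy $\ord_pC(r,\underline i;\xi)\ge W(r;\underline i)-N$, i.e.\ one power of $p$ per unit of excess weight. Doing $k=1$ and then tensoring does not produce this; the argument has to be run on $\mathcal S_k$ itself.

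\textbf{Second gap: passing from $p$ to $q$.} ``Iterated through $a$ steps'' does not yield $q^{W}$-divisibility for $\bar\beta_{k,a}$. A product of $a$ matrices with row bound $\ord_p\ge W(\text{row})$ is only guaranteed the same $p$-adic row bound. The paper instead works with the $p$-Frobenius in two steps.
\begin{itemize}
\item It proves $\ord_pB^{(k)}((s,\underline j),(r,\underline i))\ge W(s;\underline j)$ for $\bar\beta_{k,1}$ (Theorem~\ref{Theorem 3.10}). The coefficient of $\gamma^{(mn+1)l/m}t^le^{\underline i'}$ in $\beta_{k,1}(\gamma^{(mn+1)r/m}t^re^{\underline i})$ has valuation at least $W(l;\underline i')$. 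Adding the reduction bound $W(s;\underline j)-W(l;\underline i')$ from Theorem~\ref{Theorem 3.8} gives $W(s;\underline j)$.
\item It then uses linearity, $\bar\beta_{k,a}=\bar\beta_{k,1}^a$ (Lemma~\ref{Lemma 3.1}), and $\det(1-\bar\beta_{k,1}^aT^a)=\prod_{\zeta^a=1}\det(1-\zeta\bar\beta_{k,1}T)$. Hence the $q$-adic slopes of $\bar\beta_{k,a}$ equal the $p$-adic slopes of $\bar\beta_{k,1}$.
\end{itemize}
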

	
	Define the combinatoric Hodge polygon as the lower convex hull of the points $(0,0)$ and 
	\begin{equation} \label{Hodge}
		(\sum_{i=0}^N h_i(n,m,k),\sum_{i=0}^N \frac{i}{m}h_i(n,m,k))\ {\rm for}\ N=0,1,2,....   
	\end{equation}
	It can be seen that this coincides with the $q$-adic Newton polygon of the product 
	$$\prod(1-q^{i/m}T)^{h_{i}(n,m,k)}.$$
	Thus, our result can be interpreted as an instance of `Newton over Hodge' principle.
	
	This paper is organized as follows. In Section 2, we will construct the $p$-adic relative cohomology associated to the weighted hyper-Kloosterman sum.
	In Section 3, we construct a $p$-adic relative cohomology to express the $k$-th symmetric power $L$-function.
	In Section 4, we give the proof of Theorem \ref{thm011}.

	\section{Relative cohomology\label{Section 2}}
	\subsection{Relative cohomology\label{Subsection 2.1}}
	
	In this section we review the construction of the relative cohomology that has been used to express the classical $L$-function $L(Kl_{n,m},\bar{t},T)$. Throughout this paper, unless otherwise specified, we always have $p>2$ and $p\nmid m(mn+1)$.
	
	For $q=p^{a}$, let $\mathbb{Q}_{q}$ be the unramified extension of $\mathbb{Q}_{p}$ of degree $a$ and $\mathbb{Z}_{q}$ denote its ring of integers.
	Let
	$$E(T):=\exp(\sum_{j\geq 0}T^{p^{j}}/p^{j})\in(\mathbb{Q}\cap\mathbb{Z}_{p})[[T]]$$
	be the Artin-Hasse series and let $\gamma\in\overline{\mathbb{Q}}_{p}$ be a root of $\sum_{j\geq 0}T^{p^{j}}/p^{j}=0$ such that $\ord_{p}\gamma=1/(p-1)$. We fix $\gamma^{1/mq}$, a $mq$-th root of $\gamma$. Let $\mathfrak{D}_{q}$ be the ring of integers of $\mathbb{Q}_{q}(\gamma^{1/mq})$. 
	Recall the Newton polytope of $Kl_{n,m}$, $\Delta\in\mathbb{R}^n$, is the convex hull spanned by the points $(m,0,\cdot\cdot\cdot,0)$, $(0,1,0,\cdot\cdot\cdot,0)$,..., $(0,0,\cdot\cdot\cdot,0,1)$ and $(-1,-1,\cdot\cdot\cdot,-1)$.

	For $u=(u_{1},\cdot\cdot\cdot,u_{n})\in\mathbb{Z}^{n}$, define
	$$
	M(u)=\max\{0,-u_{1},\cdot\cdot\cdot,-u_{n}\} $$
	and
	\begin{equation} \label{weight}
		\omega(u)=\frac{u_{1}}{m}+u_{2}+\cdot\cdot\cdot+u_{n}+\frac{mn+1}{m}M(u).
	\end{equation}
	One can see that $\omega$ is actually the weight function associated with $\Delta$. In another words, for any $u\in\mathbb{Z}^{n}$, $\omega(u)$ is the smallest nonnegative real number such that $u$ lies inside the dilated polytope $\omega(u)\triangle$ including its boundary. Then the following properties hold:
	\begin{prop} \label{weight}
		For any $u,v\in\mathbb{Z}^{n}$, and $r\in\mathbb{Z}_{\geq 0}$, we have\\
		(1) $\omega(u)\ge 0$, and the equality holds if and only if $u=0$.\\
		(2) $\omega(ru)=r\omega(u)$.\\
		(3) $\omega(u+v)\leq \omega(u)+\omega(v)$ with equality if and only if $u,v$ are cofacial with respect to $\Delta$. \\
		(4) (2) and (3) hold for $M(u)$.
	\end{prop}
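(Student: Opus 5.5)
The plan is to reduce every assertion to an explicit piecewise-linear description of $\omega$ on the cones of the fan of $\Delta$. The polytope $\Delta$ is a simplex with vertices $e_0:=(-1,\dots,-1)$, $me_1$, $e_2,\dots,e_n$, and it contains the origin in its interior. The cone over the facet opposite a vertex consists of the points $u$ that can be written as a nonnegative combination of the remaining vertices.

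First I will compute $\omega$ on each such cone. On the cone omitting $e_0$, every $u_i\ge0$ and $M(u)=0$, so
\[
\omega(u)=\frac{u_1}{m}+u_2+\cdots+u_n .
\]
This is the coefficient sum when $u$ is written in the vertices $me_1,e_2,\dots,e_n$. On the cone omitting the vertex $e_j$ (read as $me_1$ when $j=1$), we have $M(u)=-u_j\ge -u_i$ for all $i$. Writing $u=M(u)e_0+\sum_{i\ne j}c_i v_i$, the coefficients are nonnegative exactly on that cone, and their sum reproduces the formula \eqref{weight}. This confirms the claim in the text that $\omega(u)$ is the least $r\ge 0$ with $u\in r\Delta$. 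Concretely, $\omega$ is the maximum of the linear functionals $\ell_F$ attached to the facets $F$ (those with $\ell_F=1$ on $F$), namely
\[
\omega(u)=\max_F \ell_F(u),
\]
with the maximum attained exactly on the cone over $F$.

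The four properties then follow from this description. For (1), the origin is an interior point, so $\omega\ge0$. Moreover, $\omega(u)=0$ would force every coefficient in the cone decomposition to vanish, which gives $u=0$. One can also see this directly: if $M(u)=0$, then each $u_i\ge0$ and the positive sum forces $u=0$. For (2), both $M$ and the linear part are positively homogeneous in $r\ge0$. For (3), write
\[
\omega(u+v)=\ell_F(u+v)=\ell_F(u)+\ell_F(v)\le \omega(u)+\omega(v),
\]
where $F$ is a facet whose cone contains $u+v$. For (4), $M$ is a maximum of the linear functionals $0,-u_1,\dots,-u_n$, so it is homogeneous and subadditive in the same way.

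The only delicate step is the equality case in (3). Cofacial means that $u$ and $v$ lie in a common cone. If they do, one linear functional $\ell_F$ computes $\omega$ at $u$, at $v$, and at $u+v$, so equality holds. Conversely, suppose equality holds, and take $F$ with $u+v$ in its cone. Then $\ell_F(u)+\ell_F(v)=\omega(u)+\omega(v)$ together with $\ell_F\le\omega$ forces $\ell_F(u)=\omega(u)$ and $\ell_F(v)=\omega(v)$. This places both $u$ and $v$ in the cone over $F$, so they are cofacial. The equality case for $M$ in (4) is not asserted, so nothing further is needed there.
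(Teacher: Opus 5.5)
Your argument is correct, and it is essentially the standard argument the paper relies on implicitly. The paper gives no proof of this proposition beyond the remark that $\omega$ is the weight (gauge) function of $\Delta$. Your description $\omega=\max_F \ell_F$, linear exactly on the cones over the facets, is precisely what makes (1)--(3) immediate.

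The one thing to fix is your dismissal of the equality case in (4). Statement (4) says that (3) holds for $M$, and (3) includes the cofaciality criterion for equality. This costs only one line: $\omega-\frac{mn+1}{m}M$ is linear, so
\[
\omega(u)+\omega(v)-\omega(u+v)=\frac{mn+1}{m}\bigl(M(u)+M(v)-M(u+v)\bigr).
\]
Hence the equality cases for $\omega$ and $M$ coincide. Equivalently, the regions where $M$ is given by $0$ or by $-u_j$ are exactly your cones.
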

	\par Let $\theta(T):=E(\gamma T)=\sum_{j\geq 0}\theta_{j}T^{j}$ be Dwork's splitting function satisfying $\ord_{p}\theta_{j}\geq j/(p-1)$ and $\theta(1)=\zeta_{p}$ for the fixed primitive $p$-th root of unity. We then define
	\begin{equation}
		F(t,x):=\theta(x_{1}^{m})\theta(x_{2})\cdot\cdot\cdot\theta(x_{n})\theta(\frac{t}{x_{1}\cdot\cdot\cdot x_{n}}).
	\end{equation}
	We write $F(t,x)=\sum_{u\in\mathbb{Z}^{n}}B(u,t)x^{u}$ with
	$$B(u,t)=\sum_{(k_{i},l)\in I(u)}\theta_{k_{1}}\theta_{k_{2}}\cdot\cdot\cdot\theta_{k_{n}}\theta_{l}t^{l}$$
	where
	$$I(u)=\{(k_{i},l)\in\mathbb{Z}_{\geq 0}^{n+1}|mk_{1}-l=u_{1},\ k_{i}-l=u_{i}\ \mathrm{for}\ 2\leq i\leq n\}.$$
	If we write $B(u,t)=\sum_{l\geq M(u)}B(u;l)t^{l}$ with
	\begin{equation}
		B(u;l):=\sum_{(k_{i},l)\in I(u)}\theta_{k_{1}}\theta_{k_{2}}\cdot\cdot\cdot\theta_{k_{n}}\theta_{l},
	\end{equation}
	then we have the following estimates.
	
	\begin{prop} \label{Proposition 2.2}
		For $u\in\mathbb{Z}^{n}$ and $l\geq M(u)$, we have $${\rm ord}_{p}B(u;l)\geq\frac{\omega(u)}{p-1}+\frac{(mn+1)(l-M(u))}{m(p-1)}.$$
	\end{prop}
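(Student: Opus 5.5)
The plan is to estimate each term of $B(u;l)$ separately and take the minimum. Since $\ord_p\theta_j\ge j/(p-1)$, every summand $\theta_{k_1}\cdots\theta_{k_n}\theta_l$ with $(k_i,l)\in I(u)$ has valuation at least $(k_1+k_2+\cdots+k_n+l)/(p-1)$. It therefore suffices to show that every such $(k_i,l)$ satisfies
\[
k_1+k_2+\cdots+k_n+l \;\ge\; \omega(u)+\frac{(mn+1)(l-M(u))}{m}.
\]
In fact equality will hold, because $I(u)$ determines the $k_i$ in terms of $u$ and $l$: $k_1=(u_1+l)/m$ and $k_i=u_i+l$ for $2\le i\le n$.

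The computation then goes as follows. Substituting,
\[
k_1+\cdots+k_n+l=\frac{u_1}{m}+u_2+\cdots+u_n+\frac{l}{m}+(n-1)l+l=\frac{u_1}{m}+u_2+\cdots+u_n+\frac{mn+1}{m}\,l .
\]
Writing $l=M(u)+(l-M(u))$ and using the definition of $\omega$ in \eqref{weight}, this equals
\[
\omega(u)+\frac{(mn+1)(l-M(u))}{m}.
\]
Hence each summand has valuation at least the claimed bound. The ultrametric inequality transfers the bound to the finite sum $B(u;l)$. The set $I(u)$ has at most one element for each $l$, and it is empty unless $m\mid u_1+l$, in which case $B(u;l)=0$ and the inequality holds trivially.

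The only point needing care is the range of $l$. Nonnegativity of the $k_i$ forces $l\ge -u_i$ for $i\ge2$ and $l\ge -u_1$, and together with $l\ge0$ this gives $l\ge M(u)$. This matches the hypothesis and shows that the expansion $B(u,t)=\sum_{l\ge M(u)}B(u;l)t^l$ is correct.

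I expect no real obstacle; the only thing to verify is that the coefficient of $l$, namely $1/m+(n-1)+1=(mn+1)/m$, matches the coefficient of $M(u)$ in $\omega$, and it does.
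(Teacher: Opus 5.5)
Your proof is correct and is essentially the paper's argument: bound the (at most one) summand of $B(u;l)$ using $\ord_p\theta_j\ge j/(p-1)$, then express $k_1+\cdots+k_n+l$ in terms of the weight of $u$. The paper only cites Proposition 2.1 for the second step, whereas you compute it explicitly and obtain the exact identity $k_1+\cdots+k_n+l=\omega(u)+\frac{(mn+1)(l-M(u))}{m}$; this identity is what produces the $l-M(u)$ term, since subadditivity of $\omega$ alone would give only the $\omega(u)/(p-1)$ part.
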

	\textit{Proof.} This result follows from Proposition \ref{weight} and the fact $\ord_{p}\theta_{j}\geq j/(p-1).$  $\hfill\square$

	Let $s$ be an integer and $0\leq s\leq a$. Define the spaces
	$$\mathcal{O}_{p^{s}}=\{\zeta:=\sum_{r\geq 0}\zeta(r)\gamma^{\frac{(mn+1)r}{mp^{s}}}t^{r}|\zeta(r)\in\mathfrak{D}_q,|\zeta(r)|_{p}\rightarrow 0\ \mathrm{as}\ r\rightarrow\infty\}$$
	and $$\mathcal{C}_{p^{s}}=\{\xi:=\sum_{u\in\mathbb{Z}^{n}}\xi(u)\gamma^{\omega(u)}t^{p^{s}M(u)}x^{u}|\xi(u)\in\mathcal{O}_{p^{s}},\|\xi(u)\|\rightarrow 0\ \mathrm{as}\ \omega(u)\rightarrow\infty\}.$$
	The norm on $\mathcal{O}_{p^{s}}$ is defined as $\|\zeta\|:=\sup_{r\geq 0}|\zeta(r)|_{p}$, and  the norm on $\mathcal{C}_{p^{s}}$ is $\|\xi\|:=\sup_{u\in\mathbb{Z}^{n}}\|\xi(u)\|$. The term $\gamma^{1/mp^{s}}$ is well-defined viewing as $(\gamma^{1/mq})^{p^{a-s}}$ for the fixed uniformizer $\gamma^{1/mq}$. We abbreviate $\mathcal{O}$ and $\mathcal{C}$ for the case $s=0$, respectively.
	
	For $i\geq 0$, let $\gamma_{i}:=\sum_{j=0}^{i}\gamma^{p^{j}}/p^{j}=-\sum_{j\geq i+1}\gamma^{p^{j}}/p^{j}$. The latter shows that
	\begin{equation} \label{(2.4)}
		\ord_{p}\gamma_{i}=\frac{p^{i+1}}{p-1}-i-1.
	\end{equation}
	Let
	\begin{equation} \label{H}
		H(t,x):=\sum_{j=0}^{\infty}\gamma_{j}Kl_{n,m}(t^{p^{j}},x^{p^{j}})
	\end{equation}
	and observe that $\exp(H(t,x))=\prod_{j=0}^{\infty}F(t^{p^{j}},x^{p^{j}}).$ For each $1\leq l\leq n$, we define the differential operators
	$$D_{t^{p^{s}},l}:=\exp(-H(t^{p^{s}},x))\circ x_{l}\frac{\partial}{\partial x_{l}}\circ\exp(H(t^{p^{s}},x))=x_{l}\frac{\partial}{\partial x_{l}}+x_{l}\frac{\partial H(t^{p^{s}},x)}{\partial x_{l}}.$$
	We have the following explicit expression:
	\begin{equation}  \label{differential}
		D_{t^{p^{s}},l}=\left\{\begin{array}{cc}
			x_{1}\frac{\partial}{\partial x_{1}}+\sum\limits_{j=0}^{\infty}\gamma_{j}p^{j}(mx_{1}^{mp^{j}}-\frac{t^{p^{s+j}}}{x_{1}^{p^{j}}\cdot\cdot\cdot x_{n}^{p^{j}}})   &  \mathrm{for}\ l=1\\
			x_{l}\frac{\partial}{\partial x_{l}}+\sum\limits_{j=0}^{\infty}\gamma_{j}p^{j}(x_{l}^{p^{j}}-\frac{t^{p^{s+j}}}{x_{1}^{p^{j}}\cdot\cdot\cdot x_{n}^{p^{j}}})  &  \mathrm{for}\ 2\leq l\leq n.
		\end{array}\right.
	\end{equation}
	Note that $D_{t^{p^{s}},l}$ are endomorphisms of $\mathcal{C}_{p^{s}}$ over $\mathcal{O}_{p^{s}}$ and commute with each other. Then we construct a Koszul complex $\Omega^{\bullet}(\mathcal{C}_{p^{s}},D_{t^{p^{s}}})$ by letting
	\begin{equation*}
		\Omega^{i}(\mathcal{C}_{p^{s}},D_{t^{p^{s}}}):=\left\{\begin{array}{cc}
			\mathcal{C}_{p^{s}}  &  \mathrm{for}\ i=0\\
			\bigoplus\limits_{1\leq j_{1}<j_{2}\cdot\cdot\cdot<j_{i}\leq n}\mathcal{C}_{p^{s}}\frac{dx_{j_{1}}}{x_{j_{1}}}\wedge\cdot\cdot\cdot\wedge\frac{dx_{j_{i}}}{x_{j_{i}}}  &  \mathrm{for}\ 1\leq i\leq n
		\end{array}\right.
	\end{equation*}
	with the boundary map defined by $D_{t^{p^{s}}}:\Omega^{i}\rightarrow\Omega^{i+1}$
	\begin{equation*}
		D_{t^{p^{s}}}(\xi\frac{dx_{j_{1}}}{x_{j_{1}}}\wedge\cdot\cdot\cdot\wedge\frac{dx_{j_{i}}}{x_{j_{i}}}):=\Big(\sum_{l=1}^{n}D_{t^{p^{s}},l}(\xi)\frac{dx_{l}}{x_{l}}\Big)\wedge\frac{dx_{j_{1}}}{x_{j_{1}}}\wedge\cdot\cdot\cdot\wedge\frac{dx_{j_{i}}}{x_{j_{i}}} .
	\end{equation*}
	Denote by $H^{i}(\mathcal{C}_{p^{s}},D_{t^{p^{s}}})$ the $i$-th cohomology space of $\Omega^{\bullet}(\mathcal{C}_{p^{s}},D_{t^{p^{s}}})$. 
	It follows from \cite[Theorem 3.1]{HS172} that when $p\nmid m$, the cohomology is acyclic except in the top dimension and $H^{n}(\mathcal{C}_{p^{s}},D_{t^{p^{s}}})$ is a free $\mathcal{O}_{p^s}$-module of rank $mn+1$.
	It follows from \cite[Theorem 1.2]{WY} that
	$$\{1,\gamma^{i_1/m}x_1^{i_1},\gamma^{(i_1+m)/m}x_1^{i_1}x_2,\gamma^{(i_{1}+2m)/m}x_{1}^{i_{1}}x_{2}x_{3},\cdots \gamma^{(i_1+nm-m)/m}x_1^{i_1}x_2\cdots, x_n\}_{1\le i_1\le m}$$ 
	forms a basis of $H^{n}(\mathcal{C}_{p^{s}},D_{t^{p^{s}}})$.
	We order the basis by
	\begin{equation} \label{basis}
		e_i =\begin{cases}
			1   & \mathrm{when}\ i=0, \\
			\gamma^{\frac{i}{m}} x_1^{i_1}x_2\cdots x_{\alpha+1} &\mathrm{when}\ i=i_1 +\alpha m\ \mathrm{for}\ 1\leq i_1 \leq m,\ 0\leq \alpha\leq n-1 ,
		\end{cases}    
	\end{equation}
	and denote $\bar{e}_i=\gamma^{-i/m} e_i$. For $u\in\mathbb{Z}^{n}$, abuse the notation, we sometimes write $\omega(x^u)$ for the weight of its exponent vector. By the definition of the weight function (\ref{weight}), we readily compute $\omega(\bar{e}_{i})=i/m$ for $0\leq i\leq mn$.

	
	Define $\psi_{\mathbf x}$ acting on power series ring involving $t$ and $\mathbf x$ by
	$$\psi_{\mathbf x}\big(\sum_{\mathbf{v}\in\mathbb{Z}^{n},\ r\geq 0} a(r, \mathbf v)t^r\mathbf x^{\mathbf v}\big)=\sum_{\mathbf{v}\in\mathbb{Z}^{n},\ r\geq 0} a(r, p\mathbf v)t^r\mathbf x^{\mathbf v}.$$
	Define the relative Frobenius map $\alpha_1(t^{p^s}):\mathcal{C}_{p^s}\rightarrow \mathcal{C}_{p^{s+1}}$ as
	$$
	\alpha_{1}(t^{p^{s}}):=\exp(-H(t^{p^{s+1}},x))\circ\psi_{x}\circ\exp(H(t^{p^{s}},x))=\psi_{x}\circ F(t^{p^{s}},x) $$
	and define $\alpha_a(t):\mathcal{C}\rightarrow \mathcal{C}_{q}$ as
	$$\alpha_{a}(t):=\alpha_{1}(t^{p^{a-1}})\cdot\cdot\cdot\alpha_{1}(t^{p})\alpha_{1}(t)=\exp(-H(t^{q},x))\circ\psi_{x}^{a}\circ\exp(H(t,x)).
	$$
	We also have $pD_{t^{p^{s+1}},l}\circ\alpha_{1}(t^{p^s})=\alpha_{1}(t^{p^s})\circ D_{t^{p^s},l}$ for $1\leq l\leq n$. 
	Then we can define a chain map $\Frob^{\bullet}(\alpha_{1}(t^{p^s})):\Omega^{\bullet}(\mathcal{C}_{p^s},D_{t^{p^s}})\rightarrow\Omega^{\bullet}(\mathcal{C}_{p^{s+1}},D_{t^{p^{s+1}}})$ by
	$$\Frob^{i}(\alpha_{1}(t^{p^s}))(\zeta\frac{dx_{j_{1}}}{x_{j_{1}}}\wedge\cdot\cdot\cdot\wedge\frac{dx_{j_{i}}}{x_{j_{i}}}):=p^{n-i}\alpha_{1}(t^{p^s})(\zeta)\frac{dx_{j_{1}}}{x_{j_{1}}}\wedge\cdot\cdot\cdot\wedge\frac{dx_{j_{i}}}{x_{j_{i}}}.$$
	Furthermore, we have
	$qD_{t^{q},l}\circ\alpha_{a}=\alpha_{a}\circ D_{t,l}$ for $1\leq l\leq n$.
	Then we can similarly define a chain map $\Frob^{\bullet}(\alpha_{a}):\Omega^{\bullet}(\mathcal{C},D_{t})\rightarrow\Omega^{\bullet}(\mathcal{C}_{q},D_{t^{q}})$.
	Note that $H^i=0$ for each $i\neq n$. We only consider the induced Frobenius  $\bar{\alpha}_{a}(t):=\Frob^{n}(\alpha_{a}):H^{n}(\mathcal{C},D_{t})\rightarrow H^{n}(\mathcal{C}_{q},D_{t^{q}})$.
	
	For $\bar{t}\in\overline{\mathbb{F}}_{q}^{*}$ of degree $d$, let $\hat{t}$ be its Teichm\"uller lifting of $\bar{t}$ in $\overline{\mathbb Q}_p$.
	We regard the specialization $\mathcal{C}(\hat{t})$ as an $\mathcal{O}$-algebra via the homomorphism
	$$\mathcal{O}\to \mathcal{C}(\hat{t}),\ \ \ \ t\to \hat{t}.$$
	The specialization of the induced Frobenius is $\bar{\alpha}_{a}(\hat{t})$, note that $$\bar\alpha_{ad}(\hat{t}):=
	\bar\alpha_a(\hat t^{\,q^{d-1}})
	\circ\cdots\circ
	\bar\alpha_a(\hat t^{\,q})
	\circ
	\bar\alpha_a(\hat t)$$
	acts on $H^n\otimes_{\mathcal{O}}\mathcal{C}(\hat{t})$ linearly since $\hat{t}^{q^{d}}=\hat{t}$, here $H^n$ is the abbreviation of the specialization of $H^n (\mathcal{C}_q ,D_{t^{q}})$ via $t\to \hat{t}$. By the Dwork trace formula \cite[Theorem 2.2]{AS1}, we have
	$$L(Kl_{n,m},\bar{t},T)^{(-1)^{n+1}}=\det(1-\bar{\alpha}_{a d}T|H^{n} \otimes_{\mathcal{O}}\mathcal{C}(\hat{t})).$$
	Given $p\nmid m$, it follows from \cite{{AS1}} that
	$$L(Kl_{n,m},\bar{t},T)^{(-1)^{n+1}}=(1-\pi_{0}(\bar{t})T)(1-\pi_{1}(\bar{t})T)\cdot\cdot\cdot(1-\pi_{mn}(\bar{t})T)\in\mathbb{Z}[\zeta_{p}][T].$$
	
	\subsection{$k$-th symmetric power cohomology}\label{Section 3.1}

	Define
	$$\nabla_{t^{p^{s}}}:=\exp(-H(t^{p^{s}},x))\circ t\frac{\partial}{\partial t}\circ\exp(H(t^{p^{s}},x))=t\frac{\partial}{\partial t}+t\frac{\partial H(t^{p^s},x)}{\partial t}.$$
	Observe that $\nabla_{t^{p^{s}}}$ commutes with $D_{t^{p^{s}},l}$ for $1\leq l\leq n$. So it induces a connection map on $H^{n}(\mathcal{C}_{p^{s}},D_{t^{p^{s}}})$, for which we still denote by $\nabla_{t^{p^{s}}}$. For the case $s=0$ we denote $\nabla_t$ by $\nabla$.

	Define $\mathcal{S}_{k,t^{p^{s}}}:={\rm Sym}^{k}_{\mathcal{O}_{p^{s}}}H^{n}(\mathcal{C}_{p^{s}},D_{t^{p^{s}}})$ to be the $k$-th symmetric power of the free module $H^{n}(\mathcal{C}_{p^{s}},D_{t^{p^{s}}})$. It is a free $\mathcal{O}_{p^{s}}$-module with basis
	$${\rm Sym}^{k}(\mathcal{B}):=\{e^{\underline{i}}=e_{0}^{i_{0}}e_{1}^{i_{1}}\cdot\cdot\cdot e_{mn}^{i_{mn}}:|\underline{i}|=k\},$$
	where $\underline{i}=(i_{0},\cdot\cdot\cdot,i_{mn})\in \mathbb{Z}_{\ge 0}^{mn+1}$ and $|\underline{i}|=i_0+\cdots+i_{mn}$.
	Every element of \(S_{k,t^{p^s}}\) is a finite \(\mathcal O_{p^s}\)-linear combination of the monomials \(e^{\underline{i}}\). We extend the connection by the Leibniz rule:
	$$\nabla_{t^{p^{s}}}(\zeta_{\underline{i}} e_{0}^{i_0} \cdots e_{mn}^{i_{mn}}):=t\frac{d \zeta_{\underline{i}}}{dt}e^{\underline{i}}+\sum_{j=0}^{mn} i_j\zeta_{\underline{i}} e_0^{i_0}\cdots e_j^{i_j-1}\cdots e_{nm}^{i_{nm}}\cdot \nabla_{t^{p^{s}}}(e_j).$$
	We define a complex $\Omega^{\bullet}(\mathcal{S}_{k,t^{p^{s}}},\nabla_{t^{p^{s}}})$ by
	$$\Omega^{0}:=\mathcal{S}_{k,t^{p^{s}}} \ {\rm and}\  \Omega^{1}:=\mathcal{S}_{k,t^{p^{s}}}\frac{dt}{t}$$
	with the boundary map $\nabla_{t^{p^{s}}}\xi=\nabla_{t^{p^{s}}}(\xi)\frac{dt}{t}$.
	Denote its cohomology by $H^{i}(\mathcal{S}_{k,t^{p^{s}}},\nabla_{t^{p^{s}}})$ for $i=0,1$.
	
	Define ${\rm Sym}^{k}\bar{\alpha}_{1}(t^{p^{s}}):\mathcal{S}_{k,t^{p^s}}\rightarrow \mathcal{S}_{k,t^{p^{s+1}}}$ by
	$${\rm Sym}^{k}\bar{\alpha}_{1}(t^{p^{s}})(\zeta_{\underline{i}} e_0^{i_0}e_1^{i_1}\cdots e_{mn}^{i_{mn}})=\zeta_{\underline{i}} \bar{\alpha}_{1}(t^{p^{s}})(e_0)^{i_0}\cdots \bar{\alpha}_{1}(t^{p^{s}})(e_{mn})^{i_{mn}}.$$
	Similarly we can define ${\rm Sym}^{k}(\bar{\alpha}_{a}):\mathcal{S}_{k,t}\rightarrow\mathcal{S}_{k,t^{q}}$. For $1\le s \le a$, we define $\psi_{t}:\mathcal{O}_{p^{s}}\rightarrow\mathcal{O}_{p^{s-1}}$ by $\psi_{t}: \sum a(r)t^{r}\mapsto\sum a(pr)t^{r}$. Thus we can define $\mathfrak{D}_q$-linear endomorphisms of $\mathcal{S}_{k,t}$ by
	$$\beta_{k,1}:=\psi_{t}\circ{\rm Sym}^{k}(\bar{\alpha}_{1}(t))\ {\rm and} \ \beta_{k,a}:=\psi_{t}^{a}\circ{\rm Sym}^{k}(\bar{\alpha}_{a}).$$

	\begin{lem} \label{Lemma 3.1}
		We have that $\beta_{k,a}=\beta_{k,1}^{a}$.
	\end{lem}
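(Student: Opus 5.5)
The plan is to reduce the identity $\beta_{k,a}=\beta_{k,1}^{a}$ to two facts. The first is the factorization $\bar\alpha_a(t)=\bar\alpha_1(t^{p^{a-1}})\circ\cdots\circ\bar\alpha_1(t)$ on cohomology. The second is a commutation rule between $\psi_t$ and the Frobenius maps $\bar\alpha_1(t^{p^s})$: pulling $\psi_t$ past $\bar\alpha_1$ must turn $\bar\alpha_1(t^{p^{s}})$ into $\bar\alpha_1(t^{p^{s-1}})$.

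The first fact holds at chain level by definition, since $\alpha_a(t)=\alpha_1(t^{p^{a-1}})\cdots\alpha_1(t)$. The chain maps $\Frob^n$ carry the factor $p^{n-n}=1$ in top degree, so the composite descends to $H^n$. Applying $\mathrm{Sym}^k$, which is functorial, gives
\[
\mathrm{Sym}^k\bar\alpha_a(t)=\mathrm{Sym}^k\bar\alpha_1(t^{p^{a-1}})\circ\cdots\circ\mathrm{Sym}^k\bar\alpha_1(t).
\]
This uses that each $\bar\alpha_1(t^{p^s})$ is $\mathcal O_{p^s}$-semilinear with respect to $t\mapsto t^p$. This is how $\mathrm{Sym}^k\bar\alpha_1$ is defined on coefficients, namely $\zeta(t)\mapsto\zeta(t)$ viewed in $\mathcal O_{p^{s+1}}$ after $\psi_x$. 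We need to check the coefficient convention: since $\psi_x$ does not touch $t$, $\alpha_1(t^{p^s})$ is $\mathcal O_{p^s}$-linear with the natural inclusion $\mathcal O_{p^s}\subset\mathcal O_{p^{s+1}}$.

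The key step is the commutation lemma. For $\xi\in\mathcal S_{k,t^{p^{s}}}$ with $s\ge1$, we want
\[
\psi_t\circ\mathrm{Sym}^k\bar\alpha_1(t^{p^{s}})(\xi)=\mathrm{Sym}^k\bar\alpha_1(t^{p^{s-1}})(\psi_t\xi),
\]
where $\psi_t$ acts on coefficients in the basis $e^{\underline i}$. The argument runs as follows. In the basis $\{e_i\}$, write the Frobenius matrix as $A(t)$, so $\bar\alpha_1(t)(e_j)=\sum_i A_{ij}(t)e_i$. Then $\bar\alpha_1(t^{p^s})$ has matrix $A(t^{p^s})$, because $\alpha_1(t^{p^s})$ is obtained from $\alpha_1(t)$ by the substitution $t\mapsto t^{p^s}$. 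The same substitution carries the reduction of cohomology classes to the basis $e_i$, since $D_{t^{p^s},l}$ is $D_{t,l}$ with $t$ replaced by $t^{p^s}$; this requires the reduction formulas to be compatible with the substitution, which follows from uniqueness of the decomposition in the free module. Consequently $\mathrm{Sym}^k\bar\alpha_1(t^{p^s})$ has matrix $\mathrm{Sym}^kA(t^{p^s})$, whose entries are power series in $t^{p^s}=(t^{p^{s-1}})^p$. For any $\zeta$ one has $\psi_t(\zeta(t)\,g(t^{p}))=\psi_t(\zeta)(t)\,g(t)$. Applying this with $g$ equal to $\mathrm{Sym}^kA$ evaluated at $t^{p^{s-1}}$ gives the commutation.

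Given the lemma, we move each $\psi_t$ in $\psi_t^a\circ\mathrm{Sym}^k\bar\alpha_1(t^{p^{a-1}})\circ\cdots\circ\mathrm{Sym}^k\bar\alpha_1(t)$ rightward one step at a time. Each pass lowers the exponents by one, and the expression regroups into $(\psi_t\circ\mathrm{Sym}^k\bar\alpha_1(t))^a=\beta_{k,1}^a$. This is a straightforward induction on $a$.

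The main obstacle is justifying that the cohomological matrix of $\bar\alpha_1(t^{p^s})$ is exactly $A(t^{p^s})$ and that $\psi_t$ lands in the correct spaces $\mathcal O_{p^{s-1}}$. The growth conditions $\gamma^{(mn+1)r/mp^s}$ are designed so that $\psi_t$ maps $\mathcal O_{p^s}\to\mathcal O_{p^{s-1}}$. I will check this directly from the definitions: $\psi_t$ sends the exponent $p r$ to $r$, and $\gamma^{(mn+1)pr/mp^{s}}=\gamma^{(mn+1)r/mp^{s-1}}$.
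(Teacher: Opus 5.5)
Your argument is correct and is the same as the paper's, which simply rewrites $\beta_{k,1}^{a}=\psi_t^{a}\circ\mathrm{Sym}^k\bigl(\bar\alpha_1(t^{p^{a-1}})\cdots\bar\alpha_1(t)\bigr)=\beta_{k,a}$. That rewriting tacitly uses the commutation $\psi_t\circ\mathrm{Sym}^k\bar\alpha_1(t^{p^{s}})=\mathrm{Sym}^k\bar\alpha_1(t^{p^{s-1}})\circ\psi_t$, which you justify explicitly via the Frobenius matrix of $\bar\alpha_1(t^{p^s})$ being $A(t^{p^s})$. The only blemish is your passing description of $\bar\alpha_1(t^{p^s})$ as semilinear; as you then note yourself, it is $\mathcal{O}_{p^s}$-linear, which is exactly what your commutation computation needs.
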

	\begin{proof}
		It immediately follows from the observation that
		\begin{align*}
			\beta_{k,1}^{a}&=(\psi_{t}\circ{\rm Sym}^{k}(\bar{\alpha}_{1}(t)))\circ\cdots\circ(\psi_{t}\circ{\rm Sym}^{k}(\bar{\alpha}_{1}(t))) \\ &=\psi_{t}^{a}\circ{\rm Sym}^{k}(\bar{\alpha}_{1}(t^{p^{a-1}})\cdots\bar{\alpha}_{1}(t^{p})\circ\bar{\alpha}_{1}(t))\\
			&=\beta_{k,a}.
		\end{align*}
	\end{proof}
	
	Observe that $\beta_{k,a}\circ\nabla=q\nabla\circ\beta_{k,a}$. Therefore $\beta_{k,a}$ induces an endomorphism $\bar{\beta}_{k,a}$ on $H^{i}(\mathcal{S}_{k,t},\nabla)$ for $i=0,1$. By the same argument as \cite[Theorem 3.1]{HS24}, or see \cite{H14,H17} for further details, we conclude that $\beta_{k,a}$ and $\bar{\beta}_{k,a}$ are completely continuous operators together with the following cohomological expression for the $k$-th symmetric power $L$-function
	\begin{equation} \label{(3.2)}
		L({\rm Sym}^{k}Kl_{n,m}/\mathbb{F}_{q},T)=\frac{\det(1-\bar{\beta}_{k,a}T|H^{1}(\mathcal{S}_{k,t},\nabla))}{\det(1-q\bar{\beta}_{k,a}T|H^{0}(\mathcal{S}_{k,t},\nabla))}  .
	\end{equation}
	In the next section, we will give the explicit description of the cohomology space $H^{0}(\mathcal{S}_{k,t},\nabla)$ and $H^{1}(\mathcal{S}_{k,t},\nabla)$.
	
	\section{Symmetric power cohomology}\label{Section 3}
	
	\subsection{Relative cohomology} \label{Subsetion 2.2}
	Let $\overline{\mathcal{O}}_{p^{s}}:=\mathbb{F}_{q}[t]$ and $\overline{\mathcal{C}}_{p^{s}}$ the reduced semigroup algebra
	\begin{equation*}
		\overline{\mathcal{C}}_{p^{s}}:=\bigoplus_{u\in\mathbb{Z}^{n}}\overline{\mathcal{O}}_{p^s}t^{p^s M(u)}x^u \subset \mathbb{F}_{q}[t,x_1^{\pm},\cdots, x_n^{\pm}]
	\end{equation*}
	with the usual multiplication:
	\begin{equation*}
		\bigl(t^r t^{p^sM(u)}x^u\bigr)
		\bigl(t^{r'}t^{p^sM(v)}x^v\bigr)
		=t^{r+r'+p^{s}\delta'(u,v)}t^{p^sM(u+v)}x^{u+v}
	\end{equation*}
	where $\delta'(u,v)=M(u)+M(v)-M(u+v)$. We define the reduction maps
	$\mathrm{Pr}:\mathcal{O}_{p^{s}}\rightarrow\overline{\mathcal{O}}_{p^{s}}$ by
	$$\mathrm{Pr}(\zeta(r)\gamma^{\frac{(mn+1)r}{mp^{s}}}t^{r})=\bar{\zeta}(r)t^{r},$$
	where $\bar{\zeta}(r)\in\mathbb{F}_{q}$, and
	$\mathrm{Pr}:\mathcal{C}_{p^{s}}\rightarrow\overline{\mathcal{C}}_{p^{s}}$
	by
	$${\rm Pr}(\xi(u)\gamma^{\omega(u)}t^{p^{s}M(u)}x^{u})= \bar{\xi}(u)t^{p^{s}M(u)}x^{u}$$
	with $\xi(u)\in \mathcal{O}_{p^s}$ and $\bar{\xi}(u)\in \overline{\mathcal{O}}_{p^s}$.
	Above reduction maps are both ring homomorphisms under their corresponding multiplications. The latter one gives an isomorphism $\mathcal{C}_{p^{s}}/\gamma^{1/mq}\mathcal{C}_{p^{s}}\simeq\overline{\mathcal{C}}_{p^{s}}$.
	Hence we can view the above two maps as $\mathrm{mod}\ \gamma^{1/mq}$.
	For $t^{r+p^sM(u)}x^u\in \mathcal{C}_{p^s}$, we define its total weight function by
	\begin{equation}
		W_{p^{s}}(r;u):=\frac{(mn+1)r}{mp^{s}}+\omega(u).   \label{total}
	\end{equation}
	
	On $\overline{\mathcal{O}}_{p^{s}}$, we define an increasing weighted filtration
	$$\mathrm{Fil}^{d}\overline{\mathcal{O}}_{p^{s}}:=\{\mathbb{F}_{q}\text{-}\mathrm{vector}\ \mathrm{space}\ \mathrm{spanned}\ \mathrm{by}\ t^{r}\ {\rm such\ that}\ \frac{(mn+1)r}{mp^{s}}\leq d\},$$
	and on $\overline{\mathcal{C}}_{p^{s}}$ we define
	$$\mathrm{Fil}^{N}\overline{\mathcal{C}}_{p^{s}}:=\{\mathbb{F}_{q}\text{-}\mathrm{vector}\ \mathrm{space}\ \mathrm{spanned}\ \mathrm{by}\ t^{r+p^{s}M(u)}x^{u}\ \mathrm{such\ that}\ W_{p^{s}}(r;u)\leq N\}.$$
	We also define the weighted filtrations on $\mathcal{O}_{p^{s}}$ and
	$\mathcal{C}_{p^{s}}$ as follows:
	$$\mathrm{Fil}^{d}\mathcal{O}_{p^{s}}:=\{\mathfrak{D}_q\text{-}\mathrm{module}\  \mathrm{generated}\ \mathrm{by}\ \gamma^{\frac{(mn+1)r}{mp^{s}}}t^{r}\ \mathrm{such\ that}\ \frac{(mn+1)r}{mp^{s}}\leq d\}$$
	and
	$$\mathrm{Fil}^{N}\mathcal{C}_{p^{s}}:=\{\mathfrak{D}_q\text{-}\mathrm{module}\  \mathrm{generated}\ \mathrm{by}\ \gamma^{\frac{(mn+1)r}{mp^{s}}+\omega(u)}t^{r+p^{s}M(u)}x^{u}\ \mathrm{such\ that}\ W_{p^{s}}(r;u)\leq N\}.$$
	Since triangle inequality in Proposition \ref{weight} also holds for the total weight function $W_{p^{s}}(r;u)$, we can easily obtain the following lemma:
	\begin{lem} \label{Lemma 2.5}
		For any $N_{1},N_{2}\in\frac{1}{mp^{s}}\mathrm{Z}_{\geq 0}$, we have $\mathrm{Fil}^{N_{1}}\mathcal{C}_{p^{s}}\cdot\mathrm{Fil}^{N_{2}}\mathcal{C}_{p^{s}}\subset\mathrm{Fil}^{N_{1}+N_{2}}\mathcal{C}_{p^{s}}$ and $Fil^{N_1}\overline{\mathcal{C}}_{p^{s}}\cdot Fil^{N_2}\overline{\mathcal{C}}_{p^{s}}\subseteq Fil^{N_1+N_2}\overline{\mathcal{C}}_{p^{s}}$.
	\end{lem}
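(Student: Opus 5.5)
The plan is to reduce both inclusions to statements about single basis monomials, using bilinearity of multiplication and the triangle inequality for the total weight $W_{p^s}$.

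\emph{The integral filtration.} By definition $\mathrm{Fil}^{N}\mathcal{C}_{p^{s}}$ is the $\mathfrak{D}_q$-module generated by the elements $g(r;u):=\gamma^{W_{p^s}(r;u)}t^{r+p^sM(u)}x^u$ with $W_{p^s}(r;u)\le N$. So it suffices to treat a product $g(r;u)\,g(r';v)$ with $W_{p^s}(r;u)\le N_1$ and $W_{p^s}(r';v)\le N_2$. I expect to pass from generators to the full module (allowing convergent combinations) by continuity of multiplication, since both filtration pieces are closed under the sup norm.

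Write $\delta'(u,v)=M(u)+M(v)-M(u+v)$, which is a nonnegative integer by Proposition \ref{weight}(4). Then
$$g(r;u)\,g(r';v)=\gamma^{W_{p^s}(r;u)+W_{p^s}(r';v)}\,t^{R+p^sM(u+v)}x^{u+v},\qquad R:=r+r'+p^s\delta'(u,v)\ge 0.$$
A direct computation gives
$$W_{p^s}(R;u+v)=\frac{(mn+1)(r+r')}{mp^s}+\frac{mn+1}{m}\delta'(u,v)+\omega(u+v).$$
From the definition (\ref{weight}), $\omega(u)+\omega(v)-\omega(u+v)=\frac{mn+1}{m}\delta'(u,v)$, because the linear parts of $\omega$ cancel. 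Hence
$$W_{p^s}(R;u+v)=W_{p^s}(r;u)+W_{p^s}(r';v)\le N_1+N_2.$$
Consequently the product equals exactly $g(R;u+v)$, a generator of $\mathrm{Fil}^{N_1+N_2}\mathcal{C}_{p^s}$, with a unit coefficient. This exact additivity of total weight is the key point. It is where one must notice that the $\delta'$ shift in the $t$-exponent is precisely compensated in weight. It also confirms that the power of $\gamma$ matches the generator normalization.

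\emph{The reduced filtration.} The statement for $\overline{\mathcal{C}}_{p^s}$ follows from the same computation using the reduced multiplication rule. Alternatively, one can apply the ring homomorphism $\mathrm{Pr}$, which maps $\mathrm{Fil}^N\mathcal{C}_{p^s}$ onto $\mathrm{Fil}^N\overline{\mathcal{C}}_{p^s}$.

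The only mild obstacle is the passage to infinite sums in $\mathcal{C}_{p^s}$. It is handled by noting that coefficients tend to zero and that $\mathrm{Fil}^{N_1+N_2}$ is closed.
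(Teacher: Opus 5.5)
Your proof is correct and follows the same route as the paper, which justifies the lemma in one line by the triangle inequality for the total weight $W_{p^s}$. Your computation on generators makes this explicit and in fact shows equality, $W_{p^s}(R;u+v)=W_{p^s}(r;u)+W_{p^s}(r';v)$, because the $\delta'(u,v)$ shift in the $t$-exponent exactly compensates the defect in $\omega$; this is also why $\mathrm{Pr}$ is a ring homomorphism. Your worry about infinite sums is unnecessary: only finitely many $(r,u)$ satisfy $W_{p^s}(r;u)\le N$, so $\mathrm{Fil}^N\mathcal{C}_{p^s}$ is a finitely generated $\mathfrak{D}_q$-module, and bilinearity on generators already suffices.
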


	Let
	\begin{equation}
		\overline{D}_{t^{p^{s}},l}^{(1)}:=\left\{\begin{array}{cc}
			x_{1}\frac{\partial}{\partial x_{1}}+(mx_{1}^{m}-\frac{t^{p^{s}}}{x_{1}\cdot\cdot\cdot x_{n}})  & \mathrm{for}\ l=1 \\
			x_{l}\frac{\partial}{\partial x_{l}}+(x_{l}-\frac{t^{p^{s}}}{x_{1}\cdot\cdot\cdot x_{n}})  & \mathrm{for}\ 2\leq l\leq n
		\end{array}\right.
	\end{equation}
	and
	\begin{equation} \label{differential (1)}
		D_{t^{p^{s}},l}^{(1)}:=\left\{\begin{array}{cc}
			x_{1}\frac{\partial}{\partial x_{1}}+\gamma(mx_{1}^{m}-\frac{t^{p^{s}}}{x_{1}\cdot\cdot\cdot x_{n}})  & \mathrm{for}\ l=1 \\
			x_{l}\frac{\partial}{\partial x_{l}}+\gamma(x_{l}-\frac{t^{p^{s}}}{x_{1}\cdot\cdot\cdot x_{n}})  & \mathrm{for}\ 2\leq l\leq n.
		\end{array}\right.
	\end{equation}
	Observe that $\overline{D}_{t^{p^{s}},l}^{(1)}=D_{t^{p^{s}},l}^{(1)}=D_{t^{p^{s}},l}\ \mathrm{mod}\ \gamma^{1/mq}$.
	Hence we can define the reduced complex $\Omega^{\bullet}(\overline{\mathcal{C}}_{p^{s}},\overline{D}_{t^{p^{s}}}^{(1)})$ in the same way as $\Omega^{\bullet}(\mathcal{C}_{p^{s}},D_{t^{p^{s}}})$, and denote its cohomology by $H^{\bullet}(\overline{\mathcal{C}}_{p^{s}},\overline{D}_{t^{p^{s}}}^{(1)})$. It follows that the reduction $\Omega^{\bullet}(\mathcal{C}_{p^{s}},D_{t^{p^{s}}})$ mod $\gamma^{1/mq}$ is isomorphic to $\Omega^{\bullet}(\overline{\mathcal{C}}_{p^{s}},\overline{D}_{t^{p^{s}}}^{(1)})$ as $\overline{\mathcal{O}}_{p^{s}}$\text{-}modules. 
	
	For $r\in \mathbb{R}$, denote by $\lfloor r\rfloor$  the floor of $r$, and $\lceil r\rceil$ the ceiling of $r$.
	
	\begin{lem}\label{lem24}
		For $\xi\in\mathrm{Fil}^{N}\mathcal{C}_{p^{s}}$ and $l\leq \min\{\lfloor mN\rfloor,mn\}$,there exist $a_{l}\in\mathrm{Fil}^{N-l/m}\mathcal{O}_{p^{s}}$ and $\zeta_{j}\in\mathrm{Fil}^{N-1}\mathcal{C}_{p^{s}}$ such that
		$$\xi=\sum_{l=0}^{\min\{\lfloor mN\rfloor,mn\}}a_{l}e_{l}+\sum_{j=1}^{n}D_{t^{p^{s}},j}^{(1)}(\zeta_{j}).$$
	\end{lem}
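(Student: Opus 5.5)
The plan is to prove the corresponding statement over the reduced ring first and then lift it by a $\gamma^{1/mq}$-adic approximation argument. This is the standard strategy of Adolphson--Sperber and Haessig--Sperber.

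\textbf{Step 1 (reduced decomposition).} We show the following. For every $\bar\xi\in\mathrm{Fil}^N\overline{\mathcal C}_{p^s}$ there are $\bar a_l\in\mathrm{Fil}^{N-l/m}\overline{\mathcal O}_{p^s}$ and $\bar\zeta_j\in\mathrm{Fil}^{N-1}\overline{\mathcal C}_{p^s}$ with
\[
\bar\xi=\sum_{l\le\min\{\lfloor mN\rfloor,mn\}}\bar a_l\bar e_l+\sum_j\overline D^{(1)}_{t^{p^s},j}(\bar\zeta_j).
\]
The natural tool is the graded ring. Take the associated graded of $\overline{\mathcal C}_{p^s}$ with respect to the total weight $W_{p^s}$. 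On this graded ring the operators $\overline D^{(1)}_j$ act, modulo lower filtration, as multiplication by the weight-one elements
\[
g_1=mx_1^m-t^{p^s}/(x_1\cdots x_n),\qquad g_l=x_l-t^{p^s}/(x_1\cdots x_n).
\]
The term $x_l\partial_{x_l}$ preserves the filtration, so it drops to lower order. Because $Kl_{n,m}$ is nondegenerate ($p\nmid m$, and $p\nmid mn+1$ for the face containing all vertices), the Kouchnirenko/Adolphson--Sperber theory gives the following. The graded ring is Cohen--Macaulay, the $g_j$ form a regular sequence, and the quotient is free over $\overline{\mathcal O}_{p^s}$. Its basis is given by the classes of the $\bar e_l$, with $\bar e_l$ in degree $l/m$; this is the graded version of the basis from \cite[Theorem 1.2]{WY}. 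Its Poincaré series is $(1-T^m)/(1-T^{mn+1})$ over that of $\overline{\mathcal O}$.

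\textbf{Step 2 (induction on the weight).} Given $\bar\xi$ of top weight exactly $N$, write its top-weight part as
\[
\sum_l \bar a_l\bar e_l+\sum_j g_j\bar\zeta_j,
\]
where $\bar a_l$ has weight $N-l/m$ and $\bar\zeta_j$ has weight $N-1$. Replacing $g_j\bar\zeta_j$ by $\overline D^{(1)}_j(\bar\zeta_j)$ changes $\bar\xi$ only by terms of weight $<N$. The weights lie in the discrete set $\frac1{mp^s}\mathbb Z_{\ge0}$, so we can descend. The terms $\bar a_l\bar e_l$ with $l>mN$ cannot occur because $\bar a_l$ would have negative weight, which explains the bound $\min\{\lfloor mN\rfloor,mn\}$. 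The filtration bounds survive the descent because the later corrections have smaller weight.

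\textbf{Step 3 (lifting).} For $\xi\in\mathrm{Fil}^N\mathcal C_{p^s}$, reduce modulo $\gamma^{1/mq}$ and apply Step 1. Lift the $\bar a_l,\bar\zeta_j$ term by term, keeping the $\gamma$-powers dictated by the weights. These lifts use $e_l=\gamma^{l/m}\bar e_l$ and $D^{(1)}$, which contains the factor $\gamma$ multiplying $g_j$; this matches $\mathrm{Fil}^{N-1}$, since $\gamma^{1}$ carries weight $1$. The remainder then equals $\gamma^{1/mq}\xi'$ with $\xi'\in\mathrm{Fil}^{N}\mathcal C_{p^s}$. Here we use that $\gamma^{1/mq}$ can be absorbed because the filtration is by generated $\mathfrak D_q$-modules. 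Iterating, the corrections converge $\gamma^{1/mq}$-adically. Because every piece stays in the prescribed filtration level, the sums $a_l=\sum_k\gamma^{k/mq}a_l^{(k)}$ and $\zeta_j$ converge in $\mathrm{Fil}^{N-l/m}\mathcal O_{p^s}$ and $\mathrm{Fil}^{N-1}\mathcal C_{p^s}$ respectively. Completeness is built into the definitions of $\mathcal O_{p^s}$ and $\mathcal C_{p^s}$.

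\textbf{Main obstacle.} The main obstacle is Step 1. Specifically, we must verify that the $\bar e_l$ span the graded quotient over $\overline{\mathcal O}_{p^s}$ with the sharp filtration shift $N-l/m$. This is a weighted Koszul regularity statement on the face decomposition of $\Delta$, and it needs the coefficient $m\not\equiv0$ and $mn+1\not\equiv0$ mod $p$. The first thing to try is to check the identity directly on the unique maximal face cone structure. One writes monomials in terms of $x_1^m$, $x_l$ and $t^{p^s}/(x_1\cdots x_n)$, and reduces modulo the $g_j$. The aim is to show every monomial is congruent to $t^{r}$ times some $\bar e_l$, using the relation $x_1^{mn+1}\cdots\sim t^{\cdot}$ up to the unit $m^{-1}$, $(mn+1)^{-1}$.
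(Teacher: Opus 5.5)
Your proposal is correct and follows the paper's route. The paper obtains the filtered decomposition in $\overline{\mathcal{C}}_{p^s}$ from \cite[Theorem 2.5]{HS172}, together with the fact that $\overline{D}^{(1)}_{t^{p^s},j}$ raises the weight filtration by one, and then applies a standard lifting modulo $\gamma^{1/mq}$; this is exactly your Steps 1--3, and your weight descent in Step 2 spells out what the paper compresses into ``Hence''. The step you flag as the main obstacle is in fact immediate: with $y=t^{p^s}/(x_1\cdots x_n)$ one has $\overline{\mathcal{C}}_{p^s}=\mathbb{F}_q[t,x_1,\dots,x_n,y]/(t^{p^s}-x_1\cdots x_n y)$, so modulo the $g_j$ the ring becomes $\mathbb{F}_q[t,x_1]/(t^{p^s}-m^n x_1^{mn+1})$ with $\bar e_0=1$ and $\bar e_l\equiv m^{\alpha}x_1^{l}$ for $l=i_1+\alpha m$; this needs only $p\nmid m$ (not $p\nmid mn+1$), and it shows the graded quotient has Poincar\'e series $(1+T+\cdots+T^{mn})$ times that of $\overline{\mathcal{O}}_{p^s}$, not the $(1-T^m)/(1-T^{mn+1})$ you wrote (you never use that formula, so nothing breaks).
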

	\begin{proof}
		It follows from \cite[Theorem 2.5]{HS172} that the complex $\Omega^{\bullet}(\overline{\mathcal{C}}_{p^{s}},\overline{D}_{t^{p^{s}}}^{(1)})$ is acyclic except in the top dimension $n$ where $H^{n}(\overline{\mathcal{C}}_{p^{s}},\overline{D}_{t^{p^{s}}}^{(1)})$ is a free $\overline{\mathcal{O}}_{p^{s}}$\text{-}module of rank $mn+1$ with the basis $\{\bar{e}_{i}\}_{0\leq i\leq mn}$ defined after (\ref{basis}).
		
		Notice that
		$$x_{1}^{m}-\frac{t^{p^{s}}}{x_{1}\cdot\cdot\cdot x_{n}},\  x_{l}-\frac{t^{p^{s}}}{x_{1}\cdot\cdot\cdot x_{n}}\in\mathrm{Fil}^{1}\bar{\mathcal{C}}_{p^{s}}\ \mathrm{with}\ 2\leq l\leq n,$$
		and $x_{i}\frac{\partial}{\partial x_{i}}$ preserve weights. By Lemma \ref{Lemma 2.5} we have that $\overline{D}_{t^{p^{s}},i}^{(1)}\mathrm{Fil}^{N}\overline{\mathcal{C}}_{p^{s}}\subset\mathrm{Fil}^{N+1}\overline{\mathcal{C}}_{p^{s}}$ for all $1\leq i\leq n$.
		Hence for $\bar{\xi}\in\mathrm{Fil}^{N}\overline{\mathcal{C}}_{p^{s}}$, there exist $\bar{a}_{l}\in\mathrm{Fil}^{N-l/m}\overline{\mathcal{O}}_{p^{s}}$ and $\bar{\zeta}_{j}\in\mathrm{Fil}^{N-1}\overline{\mathcal{C}}_{p^{s}}$ such that
		$$\bar{\xi}=\sum_{l=0}^{\min\{\lfloor mN\rfloor,mn\}}\bar{a}_{l}\bar{e}_{l}+\sum_{j=1}^{n}\overline{D}_{t^{p^{s}},j}^{(1)}(\bar{\zeta}_{j}).$$
		Then Lemma \ref{lem24} follows from a standard lifting argument.
	\end{proof}

	To recover a $p$-adic estimate from $D^{(1)}_{t^{p^{s}}}$ to $D_{t^{p^{s}}}$, we need the following lemma.
	\begin{lem} \label{Lemma 2.7}
		For $\xi\in\mathrm{Fil}^{N}\mathcal{C}_{p^{s}}$, there exist $a_{l}\in\mathrm{Fil}^{N-l/m}\mathcal{O}_{p^{s}}$, $\zeta_{j}\in\mathrm{Fil}^{N-1}\mathcal{C}_{p^{s}}$ and $\varpi_{r}\in\mathrm{Fil}^{N+p^{r}-1}\mathcal{C}_{p^{s}}$ such that
		$$\xi=\sum_{l=0}^{\min\{\lfloor mN\rfloor,mn\}}a_{l}e_{l}+\sum_{j=1}^{n}D_{t^{p^{s}},j}(\zeta_{j})+\sum_{r=1}^{\infty}p^{p^{r}-1}\varpi_{r}.$$
	\end{lem}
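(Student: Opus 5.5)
We apply Lemma \ref{lem24} to $\xi$ and then measure the error made by replacing $D^{(1)}_{t^{p^s},j}$ with the true operators $D_{t^{p^s},j}$. Lemma \ref{lem24} gives $a_l\in\mathrm{Fil}^{N-l/m}\mathcal{O}_{p^s}$ and $\zeta_j\in\mathrm{Fil}^{N-1}\mathcal{C}_{p^s}$ with
\[
\xi=\sum_{l}a_le_l+\sum_{j=1}^n D^{(1)}_{t^{p^s},j}(\zeta_j).
\]
By (\ref{differential}) and (\ref{differential (1)}), the difference is multiplication by a power series:
\[
D_{t^{p^s},j}-D^{(1)}_{t^{p^s},j}=\sum_{r\ge1}\gamma_r p^r\, g_j(t^{p^{s+r}},x^{p^r}),
\]
where $g_1=mx_1^m-\frac{t}{x_1\cdots x_n}$ and $g_j=x_j-\frac{t}{x_1\cdots x_n}$ for $j\ge2$ (with $t$ replaced by $t^{p^s}$ before the $p^r$-th power substitution). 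Substituting, we get
\[
\xi=\sum_l a_le_l+\sum_j D_{t^{p^s},j}(\zeta_j)-\sum_{r\ge1}\sum_j\gamma_rp^r g_j(t^{p^{s+r}},x^{p^r})\zeta_j .
\]
It remains to put the last sum in the form $\sum_r p^{p^r-1}\varpi_r$ with $\varpi_r\in\mathrm{Fil}^{N+p^r-1}\mathcal{C}_{p^s}$.

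We next determine the filtration level and $p$-adic size of each correction term. The monomials $x_j^{p^r}$ (or $x_1^{mp^r}$) and $t^{p^{s+r}}/(x_1\cdots x_n)^{p^r}$ have total weight $W_{p^s}=p^r$. For instance, $u=-p^r(1,\dots,1)$ has $\omega(u)=-p^r(n-1+1/m)+\frac{mn+1}{m}p^r=p^r$ and $M(u)=p^r$, so $t^{p^{s+r}}x^u=t^{p^sM(u)}x^u$ with $r'=0$. In the normalized basis of $\mathcal{C}_{p^s}$ each such monomial is therefore $\gamma^{-p^r}$ times an element of $\mathrm{Fil}^{p^r}\mathcal{C}_{p^s}$. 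By Lemma \ref{Lemma 2.5}, $g_j(\cdot)^{(p^r)}\zeta_j\in\gamma^{-p^r}\mathrm{Fil}^{N-1+p^r}\mathcal{C}_{p^s}$. The coefficient is $\gamma_rp^r\gamma^{-p^r}$, and by (\ref{(2.4)}) its order is
\[
\frac{p^{r+1}}{p-1}-r-1+r-\frac{p^r}{p-1}=p^r-1 .
\]
So the coefficient equals $p^{p^r-1}$ times a $p$-adic unit in $\mathfrak{D}_q$. Setting
\[
\varpi_r:=-p^{-(p^r-1)}\gamma_rp^r\sum_j g_j(t^{p^{s+r}},x^{p^r})\zeta_j ,
\]
we get $\varpi_r\in\mathrm{Fil}^{N+p^r-1}\mathcal{C}_{p^s}$, which is the required decomposition.

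The main obstacle is the bookkeeping at this step, and three points need checking. First, the normalization: $\gamma^{W}$ must attach correctly in $\mathrm{Fil}$ when a monomial of $g_j$ is multiplied by an element of $\mathcal{C}_{p^s}$. When the two factors are not cofacial, the excess weight $\omega(u)+\omega(v)-\omega(u+v)$ and the extra power $t^{p^s\delta'}$ must combine to a nonnegative $\gamma$-power, and the total weight $W_{p^s}$ must not exceed the sum; this is the content of Lemma \ref{Lemma 2.5}. Second, the multiplier $m$ in $g_1$ causes no harm, because $p\nmid m$. Third, convergence: $\varpi_r$ must lie in $\mathcal{C}_{p^s}$, so its coefficients must tend to $0$. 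This holds because $\zeta_j$ does and multiplication by a polynomial preserves this property, while the series over $r$ converges since $p^{p^r-1}\to0$. We also verify that the ranges of $l$ match Lemma \ref{lem24}; they do, since the $a_l$ are taken unchanged.
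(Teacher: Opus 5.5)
Your proposal is correct and is essentially the paper's proof: apply Lemma~\ref{lem24}, rewrite each $D^{(1)}_{t^{p^s},j}$ as $D_{t^{p^s},j}$ minus the tail $\sum_{r\ge1}\gamma_rp^r(\cdots)$, and use (\ref{(2.4)}) to write $\gamma_rp^r=p^{p^r-1}\gamma^{p^r}\tau_r$ with $\tau_r$ a unit, so that each correction term $-\tau_r\gamma^{p^r}(\cdots)\zeta_j$ lies in $\mathrm{Fil}^{N+p^r-1}\mathcal{C}_{p^s}$ and $\varpi_r$ is their sum over $j$. Your extra bookkeeping is fine; in fact the total weight $W_{p^s}$ is exactly additive on products of normalized monomials, since $\omega(u)+\omega(v)-\omega(u+v)=\frac{mn+1}{m}\delta'(u,v)$, so no stray $\gamma$-power appears.
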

	\begin{proof}
		For $\xi\in\mathrm{Fil}^{N}\mathcal{C}_{p^{s}}$, from Lemma \ref{lem24} we have that
		\begin{equation}\label{xi}
			\xi=\sum_{l=0}^{\min\{\lfloor mN\rfloor,mn\}}a_{l}e_{l}+\sum_{j=1}^{n}D_{t^{p^{s}},j}^{(1)}(\zeta_{j})
		\end{equation}
		with $\zeta_j\in {\rm Fil}^{N-1}\mathcal{C}_{p^s}$.
		
		It follows from (\ref{differential}) and (\ref{differential (1)}) that
		$$D^{(1)}_{t^{p^{s}},1}=D_{t^{p^{s}},1}-\sum_{r=1}^{\infty}\gamma_{r}p^{r}(mx_{1}^{mp^{r}}-\frac{t^{p^{s+r}}}{x_{1}^{p^{r}}\cdot\cdot\cdot x_{n}^{p^{r}}}).
		$$
		and
		$$D^{(1)}_{t^{p^{s}},l}=D_{t^{p^{s}},l}-\sum_{r=1}^{\infty}\gamma_{r}p^{r}(x_{l}^{p^{r}}-\frac{t^{p^{s+r}}}{x_{1}^{p^{r}}\cdot\cdot\cdot x_{n}^{p^{r}}})
		$$
		for $2\le l\le n$.
		Then by \hyperref[(2.4)]{(2.4)} we can write $\gamma_{r}p^{r}=p^{p^{r}-1}\gamma^{p^{r}}\tau_{r}$ where $\tau_{r}$ is a $p$-adic unit. Let
		$$\eta_{p^{r},1}:=-\tau_{r}\gamma^{p^{r}}(mx_{1}^{mp^{r}}-\frac{t^{p^{s+r}}}{x_{1}^{p^{r}}\cdot\cdot\cdot x_{n}^{p^{r}}}) $$
		and
		$$\eta_{p^{r},l}:=-\tau_{r}\gamma^{p^{r}}(x_{l}^{p^{r}}-\frac{t^{p^{s+r}}}{x_{1}^{p^{r}}\cdot\cdot\cdot x_{n}^{p^{r}}}) $$
		for $2\le l\le n$.
		Hence (\ref{xi}) can be written as
		$$\xi=\sum_{l=0}^{\min\{\lfloor mN\rfloor,mn\}}a_{l}e_{l}+\sum_{j=1}^{n}D_{t^{p^{s}},j}(\zeta_{j})+\sum_{r=1}^{\infty}\sum_{j=1}^{n}p^{p^{r}-1}\eta_{p^{r},j}\zeta_{j},$$
		where $\eta_{p^{r},j}\zeta_{j}\in\mathrm{Fil}^{p^{r}+N-1}\mathcal{C}_{p^{s}}$ for $1\leq j\leq n$. The result then follows by setting $\varpi_{r}:=\sum_{j=1}^{n}\eta_{p^{r},j}\zeta_{j}$.
	\end{proof}

	In order to detect finer $p$-adic estimates of coefficients, for $b,c\in\mathbb{R}$, we define
	\begin{equation}
		\mathcal{O}_{p^{s}}(b;c):=\{\sum_{r=0}^{\infty}\zeta(r)\gamma^{\frac{(mn+1)r}{mp^{s}}}t^{r}: \zeta(r)\in\mathfrak{D}_q,\ \ord_{p}\zeta(r)\geq br+c\}.
	\end{equation}
	And to simplify the notations for following properties, for any positive integer $i$, denote $$\mathcal{F}_{i}=\{\textbf{a}=(a_{1},a_{2},\cdot\cdot\cdot)\in\mathbb{Z}^{^{\infty}}_{\geq 0}: a_{j}\neq 0\ \mathrm{for\ all}\ 1\leq j\leq i,\ a_{j}=0\ \mathrm{for}\ \mathrm{all}\ j\geq i+1\}$$
	and let $\rho(\textbf{a}):=p^{a_{1}}+\cdot\cdot\cdot +p^{a_{i}}-i$ for $\textbf{a}\in\mathcal{F}_{i}$.
	
	\begin{thm} \label{Theorem 2.8}
		For $\xi\in\mathrm{Fil}^{N}\mathcal{C}_{p^{s}}$, there exist $C(l,\xi)\in\mathcal{O}_{p^{s}}(\frac{mn+1}{mp^{s}};\frac{l}{m}-N)$ such that
		$$\xi=\sum_{l=0}^{mn}C(l,\xi)e_{l}\ \ \mathrm{mod}\ D_{t^{p^{s}}}$$
		in $H^{n}(\mathcal{C}_{p^{s}},D_{t^{p^{s}}})$. The coefficient $C(l,\xi)$ can be explicitly expressed as
		$$\sum_{l=0}^{mn}C(l,\xi)e_{l}=\sum_{l=0}^{\min\{\lfloor mN\rfloor,mn\}}a(l,\xi)e_{l}+\sum_{i=1}^{\infty}\sum_{\textbf{r}\in\mathcal{F}_{i}}\sum_{l=0}^{\min\{mn,m\rho(\textbf{r})+\lfloor mN\rfloor\}}p^{\rho(\textbf{r})}a(l,\xi;\textbf{r})e_{l},$$
		where $a(l,\xi)\in\mathrm{Fil}^{N-l/m}\mathcal{O}_{p^{s}}$ and $a(l,\xi;\textbf{r})\in\mathrm{Fil}^{\rho(\textbf{r})+N-l/m}\mathcal{O}_{p^{s}}$.
	\end{thm}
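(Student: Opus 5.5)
The plan is to iterate Lemma \ref{Lemma 2.7} and organize the iterations by the index sets $\mathcal{F}_i$.

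\emph{First step.} Apply Lemma \ref{Lemma 2.7} to $\xi\in\mathrm{Fil}^N\mathcal{C}_{p^s}$. This gives the term $\sum_{l\le \min\{\lfloor mN\rfloor,mn\}} a(l,\xi)e_l$ with $a(l,\xi)\in\mathrm{Fil}^{N-l/m}\mathcal{O}_{p^s}$, plus a coboundary, plus the error $\sum_{r_1\ge1}p^{p^{r_1}-1}\varpi_{r_1}$ with $\varpi_{r_1}\in\mathrm{Fil}^{N+p^{r_1}-1}\mathcal{C}_{p^s}$. Since $\rho((r_1))=p^{r_1}-1$, each $\varpi_{r_1}$ lies in $\mathrm{Fil}^{N+\rho(\mathbf r)}$ with $\mathbf r=(r_1)\in\mathcal{F}_1$.

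\emph{Recursion.} Apply Lemma \ref{Lemma 2.7} again to each $\varpi_{r_1}$. This produces basis terms $a(l,\xi;(r_1))e_l$ with $l\le \min\{mn, m\rho((r_1))+\lfloor mN\rfloor\}$ and coefficients in $\mathrm{Fil}^{\rho((r_1))+N-l/m}$, a coboundary, and new errors $p^{p^{r_2}-1}\varpi_{r_1,r_2}$ with $\varpi_{r_1,r_2}\in\mathrm{Fil}^{N+\rho((r_1,r_2))}$. The total $p$-power in front is $p^{\rho((r_1,r_2))}$, since $\rho$ is additive under concatenation. Inducting on $i$, after $i$ steps every term is indexed by some $\mathbf r\in\mathcal{F}_i$ and carries the factor $p^{\rho(\mathbf r)}$, with filtration level $N+\rho(\mathbf r)$. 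This gives exactly the stated explicit expansion, provided the formal sum converges.

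\emph{Convergence.} For $\mathbf r\in\mathcal{F}_i$ we have $\rho(\mathbf r)\ge i(p-1)$, and also $\rho(\mathbf r)\ge p^{\max r_j}-1$. So the $p$-adic size of the $\mathbf r$-terms tends to $0$ as either $i$ or the entries grow. The coboundary parts accumulate to $D_{t^{p^s}}$ applied to convergent sums in $\mathcal{C}_{p^s}$, because their filtration growth, linear in $\rho$, is dominated by the $p$-power decay once $\gamma$-weights are accounted for: elements of $\mathrm{Fil}^{M}$ have norm at most $1$, and $\rho(\mathbf r)\to\infty$. Hence the identity holds in $H^n(\mathcal{C}_{p^s},D_{t^{p^s}})$.

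\emph{Coefficient estimate.} This is the main obstacle. An element $a\in\mathrm{Fil}^{d}\mathcal{O}_{p^s}$ is a $\mathfrak{D}_q$-combination of $\gamma^{(mn+1)r/mp^s}t^r$ with $(mn+1)r/(mp^s)\le d$. So in the $\zeta(r)$-normalization it has $\zeta(r)=0$ unless $\frac{mn+1}{mp^s}r\le d$, and $\operatorname{ord}_p\zeta(r)\ge0\ge \frac{mn+1}{mp^s}r-d$. Thus $\mathrm{Fil}^d\mathcal{O}_{p^s}\subset\mathcal{O}_{p^s}(\frac{mn+1}{mp^s};-d)$.

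For $a(l,\xi)$ this gives the lower bound $\frac{mn+1}{mp^s}r+\frac lm-N$. For the $\mathbf r$-terms, $p^{\rho(\mathbf r)}a(l,\xi;\mathbf r)$ has $\operatorname{ord}_p\zeta(r)\ge \rho(\mathbf r)+\frac{mn+1}{mp^s}r-\rho(\mathbf r)-N+\frac lm$, which is again the required bound. The bound is uniform in $\mathbf r$, so it passes to the convergent sum, giving $C(l,\xi)\in\mathcal{O}_{p^s}(\frac{mn+1}{mp^s};\frac lm-N)$.

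The delicate points are checking that the normalization $\operatorname{ord}_p$ here is in the same units as the filtration index, and that the $\gamma$-scaling in $\mathcal{O}_{p^s}$ is consistent with $\mathcal{O}_{p^s}(b;c)$. If there is a unit mismatch, I would instead measure $\operatorname{ord}_p$ in the coordinates where $\gamma$ is absorbed, so that $p^{\rho}$ exactly cancels the filtration shift $\rho$.
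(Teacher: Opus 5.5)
Your proposal is correct and follows the paper's proof. The paper likewise iterates Lemma \ref{Lemma 2.7}, indexing the error terms by $\mathbf{r}\in\mathcal{F}_i$ (formalized there as an induction on the depth $L$), and lets the depth tend to infinity, using that $\rho(\mathbf{r})\to\infty$. It then obtains the coefficient bound from $\mathrm{Fil}^{d}\mathcal{O}_{p^s}\subset\mathcal{O}_{p^s}(\frac{mn+1}{mp^s};-d)$ exactly as you do, and your closing worry about a unit mismatch is unfounded: $\rho(\mathbf{r})\ge\frac{(mn+1)r}{mp^s}+\frac{l}{m}-N$ is a plain numerical inequality that holds whenever the coefficient of $t^r$ is nonzero.
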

	
	\begin{proof}
		For any positive integer $L$, we claim that
		$\xi$ can be written as follows
		\begin{equation} \label{(2.13)}
			\begin{aligned}
				\xi=&\sum_{l=0}^{\min\{\lfloor mN\rfloor,mn\}}a(l,\xi)e_{l}+\sum_{i=1}^{L}\sum_{\textbf{r}\in\mathcal{F}_{i}}\sum_{l=0}^{\min\{mn,\lfloor mN\rfloor+m\rho(\textbf{r})\}}p^{\rho(\textbf{r})}a(l,\xi;\textbf{r})e_{l}\\
				&+\sum_{j=1}^{n}D_{t^{p^{s}},j}\big(\zeta_{j}+\sum_{i=1}^{L}\sum_{\textbf{r}\in\mathcal{F}_{i}}p^{\rho(\textbf{r})}\zeta(j;\textbf{r})\big)+\sum_{\textbf{r}\in\mathcal{F}_{L+1}}p^{\rho(\textbf{r})}\varpi_{\textbf{r}}
			\end{aligned}
		\end{equation}
		where the coefficients $a(l,\xi)\in\mathrm{Fil}^{N-l/m}\mathcal{O}_{p^{s}}$, $a(l,\xi;\textbf{r})\in\mathrm{Fil}^{\rho(\textbf{r})+N-l/m}\mathcal{O}_{p^{s}}$, $\xi_{j}\in\mathrm{Fil}^{N-1}\mathcal{C}_{p^{s}}$, $\xi(j;\textbf{r})\in\mathrm{Fil}^{N+\rho(\textbf{r})-1}\mathcal{C}_{p^{s}}$ and $\varpi_{\textbf{r}}\in\mathrm{Fil}^{N+\rho(\textbf{r})}\mathcal{C}_{p^{s}}$.
		
		We prove the claim by induction on $L$.
		By Lemma \ref{Lemma 2.7}, there exist $a(l,\xi)\in\mathrm{Fil}^{N-l/m}\mathcal{O}_{p^{s}}$, $\zeta_{j}\in\mathrm{Fil}^{N-1}\mathcal{C}_{p^{s}}$ and $\varpi_{r_{1}}\in\mathrm{Fil}^{N+p^{r_{1}}-1}\mathcal{C}_{p^{s}}$ such that
		\begin{equation} \label{(2.14)}
			\xi=\sum_{l=0}^{\min\{\lfloor mN\rfloor,mn\}}a(l,\xi)e_{l}+\sum_{j=1}^{n}D_{t^{p^{s}},j}(\zeta_{j})+\sum_{r_{1}=1}^{\infty}p^{p^{r_{1}}-1}\varpi_{r_{1}}.
		\end{equation}
		Then we apply Lemma \ref{Lemma 2.7} to $\varpi_{r_{1}}$ and  obtain
		\begin{equation} \label{(2.15)}
			\varpi_{r_{1}}=\sum_{l=0}^{\min\{\lfloor mN\rfloor+mp^{r_{1}}-m,mn\}}a(l,\xi;r_{1})e_{l}+\sum_{j=1}^{n}D_{t^{p^{s}},j}(\zeta(j;r_{1}))+\sum_{r_{2}=1}^{\infty}p^{p^{r_{2}}-1}\varpi_{r_{1},r_{2}}
		\end{equation}
		for some $a(l,\xi;r_{1})\in\Fil^{N+p^{r_{1}}-1-l/m}\mathcal{O}_{p^{s}}$, $\zeta(j;r_{1})\in\Fil^{N+p^{r_{1}}-2}\mathcal{C}_{p^{s}}$ and $\varpi_{r_{1},r_{2}}\in\Fil^{N+p^{r_{1}}+p^{r_{2}}-2}\mathcal{C}_{p^{s}}$.
		Let $\mathbf{r}_{1}:=(r_{1},0,\cdot\cdot\cdot)\in\mathcal{F}_{1}$ and $\mathbf{r}_{2}:=(r_{1},r_{2},0,\cdot\cdot\cdot)\in\mathcal{F}_{2}$.
		Then combining  (\ref{(2.14)}) and (\ref{(2.15)}),
		we get that
		\begin{align*}
			\xi=&\sum_{l=0}^{\min\{\lfloor mN\rfloor,mn\}}a(l,\xi)e_{l}+\sum_{l=0}^{\min\{\lfloor mN\rfloor+m\rho(\mathbf{r}_1),mn\}}p^{\rho(\mathbf{r}_1)}a(l,\xi;\mathbf{r}_{1})e_{l}\\
			&+\sum_{j=1}^{n}D_{t^{p^{s}},j}(\zeta_{j}+\sum_{\mathbf{r}_{1}\in\mathcal{F}_1}p^{\rho(\mathbf{r}_1)}\zeta(j;\mathbf{r}_1))+\sum_{\mathbf{r}_2\in \mathcal{F}_2}p^{\rho(\mathbf{r}_2)}\varpi_{\textbf{r}_2}.
		\end{align*}
		Hence the claim holds for $L=1$.
		
		Suppose the claim is true for $L$. That is, $\xi$ can be written as (\ref{(2.13)}).
		Then apply Lemma \ref{Lemma 2.7} for $\varpi_{\textbf{r}}\in\Fil^{N+\rho(\textbf{r})}\mathcal{C}_{p^{s}}$ to obtain that
		$$\varpi_{\textbf{r}}=\sum_{l=0}^{\min\{\lfloor mN\rfloor+m\rho(\textbf{r}),mn\}}a(l,\xi;\textbf{r})e_{l}+\sum_{j=1}^{n}D_{t^{p^{s}},j}(\zeta(j;\textbf{r}))+\sum_{r_{L+2}=1}^{\infty}p^{p^{r_{L+2}}-1}\varpi_{\textbf{r},r_{L+2}}$$
		where $a(l,\xi;\textbf{r})\in\Fil^{N+\rho(\textbf{r})-l/m}\mathcal{O}_{p^{s}}$, $\zeta(j;\textbf{r})\in\Fil^{N+\rho(\textbf{r})-1}\mathcal{C}_{p^{s}}$ and $\varpi_{\textbf{r},r_{L+2}}\in\Fil^{N+\rho(\textbf{r})+p^{r_{L+2}}-1}\mathcal{C}_{p^{s}}$. Note that $\mathbf{r}\in \mathcal{F}_{L+1}$. Then we set $\mathbf{r}_{L+2}:=(r_{1},\cdot\cdot\cdot,r_{L+2},0,\cdot\cdot\cdot)\in\mathcal{F}_{L+2}$. Then by the induction we conclude the claim holds.
		
		If we let $L\rightarrow\infty$ in (\ref{(2.13)}), then the term $\sum_{\textbf{r}\in\mathcal{F}_{L+1}}p^{\rho(\textbf{r})}\varpi_{\textbf{r}}\rightarrow 0$.
		It remains to show that  $C(l,\xi)\in\mathcal{O}_{p^{s}}(\frac{mn+1}{mp^{s}};\frac{l}{m}-N)$.
		Since
		$a(l,\xi)\in\Fil^{N-l/m}\mathcal{O}_{p^{s}}$, we can write
		$$a(l,\xi)=\sum_{\begin{subarray}{c}
				r\geq 0, \
				\frac{(mn+1)r}{mp^{s}}\leq N-\frac{l}{m}
		\end{subarray}}A(l,\xi,r)\gamma^{\frac{(mn+1)r}{mp^{s}}}t^{r},$$
		where the coefficients $A(l,\xi,r)\in\mathfrak{D}_q$.
		Then $\ord_{p}A(l,\xi,r)\geq 0\geq \frac{(mn+1)r}{mp^{s}}+\frac{l}{m}-N$.
		This shows $a(l,\xi)\in\mathcal{O}_{p^{s}}(\frac{mn+1}{mp^{s}};\frac{l}{m}-N)$.
		
		For $a(l,\xi;\textbf{r})\in\Fil^{\rho(\textbf{r})+N-l/m}\mathcal{O}_{p^{s}}$, we write
		$$p^{\rho(\textbf{r})}a(l,\xi;\textbf{r})=\sum_{r\geq0,\ \frac{(mn+1)r}{mp^{s}}\leq\rho(\textbf{r})+N-\frac{l}{m}}p^{\rho(\textbf{r})}A(l,\xi;\textbf{r},r)\gamma^{\frac{(mn+1)r}{mp^{s}}}t^{r}$$
		with the coefficients $A(l,\xi;\textbf{r},r)\in\mathfrak{D}_q$. We have
		$$\ord_{p}p^{\rho(\textbf{r})}A(l,\xi;\textbf{r},r)\geq\rho(\textbf{r})\geq\frac{(mn+1)r}{mp^{s}}+\frac{l}{m}-N.$$
		Hence $p^{\rho(\textbf{r})}a(l,\xi;\textbf{r})\in\mathcal{O}_{p^{s}}(\frac{mn+1}{mp^{s}};\frac{l}{m}-N)$. Thus $C(l,\xi)\in\mathcal{O}_{p^{s}}(\frac{mn+1}{mp^{s}};\frac{l}{m}-N)$.  This finishes the proof of Theorem \ref{Theorem 2.8}.
	\end{proof}

	\subsection{Connection map on the relative cohomology}
	Recall that
	$$\nabla_{t^{p^{s}}}=\exp(-H(t^{p^{s}},x))\circ t\frac{d}{dt}\circ\exp(H(t^{p^{s}},x))=t\frac{d}{dt}+\sum_{j=0}^{\infty}\frac{\gamma_{j}p^{s+j}t^{p^{s+j}}}{x_{1}^{p^{j}}x_{2}^{p^{j}}\cdot\cdot\cdot x_{n}^{p^{j}}}$$
	and it induces a connection map on $H^{n}(\mathcal{C}_{p^{s}},D_{t^{p^{s}}})$, which is also  denoted by $\nabla_{t^{p^{s}}}$.

	However, the infinite series is the main obstacle to write down the explicit connection matrix. We then turn to the connection map on the reduced relative cohomology, we focus on the case $s=0$ since the terms contains positive power of $p$ vanish in the residue field $\mathbb{F}_q$. 
	Define
	$$\overline{\nabla}^{(1)}=t\frac{d}{dt}+\frac{t}{x_{1}\cdot\cdot\cdot x_{n}},$$
	and we denote $\overline{G}=t/(x_{1}\cdot\cdot\cdot x_{n})$. Observe that $\overline{\nabla}^{(1)}\equiv\nabla\ \mathrm{mod}\ \gamma^{1/mq}$.
	Since $\overline{\nabla}^{(1)}$ commutes with $\overline{D}_{t,l}^{(1)}$ for $1\leq l\leq n$, with an abuse use of notation, it induces a connection map $\overline{\nabla}^{(1)}$ on $H^{n}(\overline{\mathcal{C}},\overline{D}_{t}^{(1)})$. Since the basis $\{\bar{e}_{i}\}_{0\leq i\leq mn}$ is independent of the parameter $t$, we have that $\overline{\nabla}^{(1)}(\bar{e}_{i})=\overline{G}(\bar{e}_{i})$ for $0\leq i\leq mn$. For each $0\leq i\leq mn$, we write $i=\varepsilon m+j$. A simple calculation shows that on the cohomology space $H^{n} (\overline{\mathcal{C}},\overline{D}_{t}^{(1)})=\overline{\mathcal{C}}/\sum_{l=1}^{n} \overline{D}^{(1)}_{t,l}\overline{\mathcal{C}}$
	we have that
	\begin{equation}  \label{(2.18)}
		\overline{\nabla}^{(1)}(\bar{e}_{i})=\overline{G}(\bar{e}_{i})=\overline{G}(\bar{e}_{\varepsilon m+j})=\left\{\begin{array}{lll}
			m\bar{e}_{m}  & \ \mathrm{for}\ i=0 \\
			\bar{e}_{(\varepsilon+1)m+j}  &\ \mathrm{for}\ 0\leq\varepsilon\leq n-2,1\leq j\leq m   \\
			t\bar{e}_{j-1}  &\ \mathrm{for}\ \varepsilon=n-1,\ \ 1\leq j\leq m
		\end{array}\right.  .
	\end{equation}
	With respect to this basis, the matrix of $\overline{G}$ is
	\begin{equation}
		\overline{G}\left(\begin{array}{c}
			\bar{e}_0 \\ \bar{e}_1 \\ \vdots \\ \bar{e}_{mn}
		\end{array}\right)=\left(\begin{array}{cc}
			\mathbf{0} &   B_{m,n}\\
			t\cdot I_{m} & \mathbf{0}
		\end{array}\right)\left(\begin{array}{c}
			\bar{e}_0 \\ \bar{e}_1 \\ \vdots \\ \bar{e}_{mn}
		\end{array}\right)
	\end{equation}
	where $I_{m}$ is the $m\times m$ identity matrix, $B_{m,n}=\mathrm{diag}\{m,1,1,\cdot\cdot\cdot,1\}$ is a $(mn-m+1)\times(mn-m+1)$ diagonal matrix. 
	
	Motivated by above calculation, we define
	\begin{equation}
		\nabla^{(1)}=t\frac{d}{dt}+G_{0}
	\end{equation}
	where the matrix $G_{0}$ linearly acts on $H^n (\mathcal{C},D_t)$ with respect to the normalized basis $\{e_{i}\}_{0\leq i\leq mn}$ via 
	\begin{equation*}
		\nabla^{(1)}(e_{i})=G_{0}(e_i)=G_{0}(e_{\varepsilon m+j})=\left\{\begin{array}{lll}
			m e_{m}  & \ \mathrm{for}\ i=0 \\
			e_{(\varepsilon+1)m+j}  &\ \mathrm{for}\ 0\leq\varepsilon\leq n-2,1\leq j\leq m   \\
			\gamma^{\frac{mn+1}{m}}t e_{j-1}  &\ \mathrm{for}\ \varepsilon=n-1,\ \ 1\leq j\leq m
		\end{array}\right.  .
	\end{equation*}
	One still observe that after the reduction $\overline{\nabla}^{(1)}=\nabla \mathrm{mod}\ \gamma^{1/mq}=\nabla^{(1)} \mathrm{mod}\ \gamma^{1/mq}$, and we then have the $p$-adic estimate for the error terms in $\nabla-\nabla^{(1)}$:
	
	\begin{lem} \label{Lemma 2.11}
		For every $0\leq l\leq mn$, in $H^{n}(\mathcal{C},D_{t})$ we can express
		$$\nabla^{(1)}(e_{l})=\nabla(e_{l})+\sum_{i=1}^{\infty}\sum_{\textbf{r}\in\mathcal{F}_{i}}\sum_{j=0}^{\min\{m\rho(\textbf{r})+l+m,mn\}}p^{\rho(\textbf{r})}b(j,l;\textbf{r})e_{j}\ \mathrm{mod}\ D_{t}$$
		with $b(j,l;\textbf{r})\in\Fil^{\rho(\textbf{r})+1+\frac{l-j}{m}}\mathcal{O}$.
	\end{lem}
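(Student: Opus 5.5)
We apply Theorem \ref{Theorem 2.8} to the difference $(\nabla-\nabla^{(1)})(e_l)$. The plan is first to represent $\nabla(e_l)$ at chain level in $\mathcal{C}$ and compare it with $G_0(e_l)$. Since $e_l$ does not depend on $t$,
$$\nabla(e_l)=\sum_{j\ge 0}\gamma_j p^j\,\frac{t^{p^j}}{x_1^{p^j}\cdots x_n^{p^j}}\,e_l .$$
We split this into the $j=0$ term $\gamma \overline{G}\, e_l$ (with $\overline{G}=t/(x_1\cdots x_n)$) and the tail $j\ge1$.

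For the $j=0$ term, note that $\gamma t/(x_1\cdots x_n)$ lies in $\Fil^1\mathcal{C}$, since $W(1;(-1,\dots,-1))=\frac{mn+1}{m}-\frac{mn+1}{m}\cdot 1+\dots$; in fact the total weight is $1$. Hence $\gamma\overline{G}e_l\in\Fil^{1+l/m}\mathcal{C}$. By the reduction computation (\ref{(2.18)}), it is congruent modulo $\gamma^{1/mq}$ (and modulo $D^{(1)}_t$) to $G_0(e_l)$. Concretely, $\gamma\overline{G}e_l-G_0(e_l)$ is an element of $\Fil^{1+l/m}\mathcal{C}$ whose image in $H^n(\overline{\mathcal{C}},\overline{D}^{(1)}_t)$ is zero. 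Applying Lemma \ref{lem24} (or rather its proof, with exact reduction) shows that this difference equals $\sum_j D^{(1)}_{t,j}(\zeta_j)$ with $\zeta_j\in\Fil^{l/m}\mathcal{C}$, plus possibly $\gamma^{1/mq}$-divisible terms. This is the main obstacle: the reduced identity must be lifted to an exact chain-level identity $\gamma\overline{G}e_l=G_0(e_l)+\sum_j D^{(1)}_{t,j}(\zeta_j)$ in $\mathcal{C}$.

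I would obtain that identity directly rather than by lifting. The relations behind (\ref{(2.18)}) come from explicit formulas such as $D^{(1)}_{t,l}(e_i)=i_l e_i+\gamma(\cdots)e_i$. The polynomials involved are exactly homogeneous of weight $1$, so the same manipulations hold verbatim in $\mathcal{C}$ with the $\gamma$-normalizations. For instance, $D^{(1)}_{t,l}(x^u)$ produces $\gamma x^u x_l-\gamma\overline{G}x^u$ with no extra error term, because the monomials $x_1^{i_1}x_2\cdots x_{\alpha+1}$ have small exponents. This gives an exact identity with $\zeta_j\in\Fil^{l/m}\mathcal{C}$.

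Having done this, replace each $D^{(1)}_{t,j}$ by $D_{t,j}$ exactly as in the proof of Lemma \ref{Lemma 2.7}. This produces the terms $\sum_{r\ge1}p^{p^r-1}\varpi_r$ with $\varpi_r\in\Fil^{l/m+p^r}\mathcal{C}$. The tail $\sum_{j\ge1}\gamma_jp^j\overline{G}^{p^j}e_l=\sum_j p^{p^j-1}\tau_j\gamma^{p^j}\overline{G}^{p^j}e_l$ has the same shape, with weight $\le p^j+l/m$. Collecting, we get
$$\nabla(e_l)-\nabla^{(1)}(e_l)=\sum_{j=1}^n D_{t,j}(\cdot)+\sum_{r\ge1}p^{p^r-1}\varpi'_r,\qquad \varpi'_r\in\Fil^{p^r+l/m}\mathcal{C}.$$

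Finally, run the iteration of Theorem \ref{Theorem 2.8} on each $\varpi'_r$ with $N=p^r-1+1+l/m$. This is precisely the induction in that proof, started at $\mathbf{r}=(r_1)$ with $\rho=p^{r_1}-1$. It yields coefficients $p^{\rho(\mathbf r)}b(j,l;\mathbf r)$ with $b(j,l;\mathbf r)\in\Fil^{\rho(\mathbf r)+1+l/m-j/m}\mathcal{O}$, and with index range $j\le\min\{mn,\lfloor m(\rho(\mathbf r)+1+l/m)\rfloor\}=\min\{mn,m\rho(\mathbf r)+l+m\}$. The terms with $i\ge1$ components arise from further iterations, and convergence follows as in Theorem \ref{Theorem 2.8}. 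After a sign change, this is the asserted expansion.
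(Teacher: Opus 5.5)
Your proposal is correct and is essentially the paper's proof. The paper also writes an exact chain-level identity case by case: $D_{t,1}(1)$ for $e_0$, $D_{t,\varepsilon+2}(e_{\varepsilon m+\alpha})$ for $0\le\varepsilon\le n-2$, and a direct computation for $\varepsilon=n-1$; the point is that the relevant variable has exponent zero in $e_l$, which is what your ``small exponents'' remark should say. This yields error terms $p^{p^r-1}\nu_{r,l}e_l$ with $\nu_{r,l}e_l\in\Fil^{p^r+l/m}\mathcal{C}$, and the paper then applies Theorem \ref{Theorem 2.8} with $N=p^r+l/m$ and relabels the multi-indices. The only difference is cosmetic: the paper uses the full $D_{t,j}$-relations directly, so the $\overline{G}^{p^r}$ tails cancel against the tail of $\nabla$, whereas you pass through $D^{(1)}$ and the conversion of Lemma \ref{Lemma 2.7} and keep both tails, which changes nothing in the weight bookkeeping.
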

	\begin{proof}
		We prove this lemma by investigating each individual basis element $e_i$. Firstly recall 
		\begin{equation*}  \label{differential}
			D_{t,l}=\left\{\begin{array}{cc}
				x_{1}\frac{\partial}{\partial x_{1}}+\sum\limits_{j=0}^{\infty}\gamma_{j}p^{j}(mx_{1}^{mp^{j}}-\frac{t^{p^{j}}}{x_{1}^{p^{j}}\cdot\cdot\cdot x_{n}^{p^{j}}})   &  \mathrm{for}\ l=1\\
				x_{l}\frac{\partial}{\partial x_{l}}+\sum\limits_{j=0}^{\infty}\gamma_{j}p^{j}(x_{l}^{p^{j}}-\frac{t^{p^{j}}}{x_{1}^{p^{j}}\cdot\cdot\cdot x_{n}^{p^{j}}})  &  \mathrm{for}\ 2\leq l\leq n.
			\end{array}\right.
		\end{equation*}
		Case I. For $e_0$, $D_{t,1}(e_0)=D_{t,1}(1)=0$ in $H^{n}(\mathcal{C},D_t)$ shows
		$$\gamma mx_{1}^{m}-\frac{\gamma t}{x_1\cdots x_n}+\sum\limits_{j=1}^{\infty}\gamma_{j}p^{j}(mx_{1}^{mp^{j}}-\frac{t^{p^{j}}}{x_{1}^{p^{j}}\cdot\cdot\cdot x_{n}^{p^{j}}})=0.$$
		Hence 
		\begin{equation*}
			\begin{aligned}
				\nabla(e_0)&=\frac{\gamma t}{x_1 \cdots x_n}+\sum_{j=1}^{\infty}\frac{\gamma_{j}p^{j}t^{p^j}}{x_{1}^{p^j}\cdots x_{n}^{p^j}} \\
				&=\gamma mx_{1}^{m}+\sum\limits_{j=1}^{\infty}\gamma_{j}p^{j}(mx_{1}^{mp^{j}}-\frac{t^{p^{j}}}{x_{1}^{p^{j}}\cdot\cdot\cdot x_{n}^{p^{j}}})+\sum_{j=1}^{\infty}\frac{\gamma_{j}p^{j}t^{p^j}}{x_{1}^{p^j}\cdots x_{n}^{p^j}} \\
				&=\nabla^{(1)}(e_0)+\sum_{j=1}^{\infty}\gamma_{j}p^{j}mx_{1}^{mp^j}.
			\end{aligned}    
		\end{equation*}
		Case II. For $e_{\varepsilon m+\alpha}$, $0\leq \varepsilon\leq n-2$, $1\leq \alpha\leq m$, the relation $D_{t,\varepsilon+2}(x_{1}^{\alpha}x_{2}\cdots x_{\varepsilon+1})=0$ gives
		$$\gamma x_{1}^{\alpha}x_{2}\cdots x_{\varepsilon+2}-\frac{\gamma tx_{1}^{\alpha}x_{2}\cdots x_{\varepsilon+1}}{x_{1}\cdots x_{n}}+x_{1}^{\alpha}x_{2}\cdots x_{\varepsilon+1}\sum_{j=1}^{\infty}\gamma_{j}p^{j}(x_{\varepsilon+2}^{p^{j}}-\frac{t^{p^{j}}}{x_{1}^{p^{j}}\cdot\cdot\cdot x_{n}^{p^{j}}})=0.$$
		Hence
		\begin{equation*}
			\begin{aligned}
				\nabla(e_{\varepsilon m+\alpha})&=\nabla(\gamma^{\frac{\varepsilon m+\alpha}{m}}x_{1}^{\alpha}x_{2}\cdots x_{\varepsilon+1})=\frac{\gamma^{\frac{(\varepsilon+1)m+\alpha}{m}}tx_{1}^{\alpha}x_{2}\cdots x_{\varepsilon+1}}{x_{1}\cdots x_{n}}+\gamma^{\frac{\varepsilon m+\alpha}{m}}x_{1}^{\alpha}x_{2}\cdots x_{\varepsilon+1}\sum_{j=1}^{\infty}\frac{\gamma_{j}p^{j}t^{p^j}}{x_{1}^{p^j}\cdots x_{n}^{p^j}}   \\
				&=e_{(\varepsilon+1)m+\alpha}+e_{\varepsilon m+\alpha}\sum_{j=1}^{\infty}\gamma_{j}p^{j}(x_{\varepsilon+2}^{p^j}-\frac{t^{p^j}}{x_{1}^{p^j}\cdots x_{n}^{p^j}})+e_{\varepsilon m+\alpha}\sum_{j=1}^{\infty}\frac{\gamma_{j}p^{j}t^{p^j}}{x_{1}^{p^j}\cdots x_{n}^{p^j}} \\
				&=\nabla^{(1)}(e_{\varepsilon m+\alpha})+e_{\varepsilon m+\alpha}\sum_{j=1}^{\infty}\gamma_{j}p^{j}x_{\varepsilon+2}^{p^j}.
			\end{aligned}    
		\end{equation*}
		Case III. For $e_{(n-1)m+\alpha}$, $1\leq\alpha\leq m$, we directly compute
		\begin{equation*}
			\begin{aligned}
				\nabla(e_{(n-1)m+\alpha})&=\nabla(\gamma^{\frac{(n-1)m+\alpha}{m}}x_{1}^{\alpha}x_{2}\cdots x_{n})=\gamma^{\frac{nm+\alpha}{m}}tx_{1}^{\alpha-1}+e_{(n-1)m+\alpha}\sum_{j=1}^{\infty}\frac{\gamma_{j}p^{j}t^{p^j}}{x_{1}^{p^j}\cdots x_{n}^{p^j}}  \\  
				&=\nabla^{(1)}(e_{(n-1)m+\alpha})+e_{(n-1)m+\alpha}\sum_{j=1}^{\infty}\frac{\gamma_{j}p^{j}t^{p^j}}{x_{1}^{p^j}\cdots x_{n}^{p^j}} 
			\end{aligned}    
		\end{equation*}

		By (\ref{(2.4)}), we may write $\gamma_{r}p^{r}=p^{p^{r}-1}\gamma^{p^r}\tau_r$ where $\tau_r$ is a $p$-adic unit. Combining above three cases we let
		\begin{equation*}
			\nu_{r,l}=\begin{cases}
				-\gamma^{p^r}\tau_{r}mx_{1}^{mp^r } & \mathrm{when}\ l=0, \\
				-\gamma^{p^{r}}\tau_{r}x_{\varepsilon+2}^{p^r} & \mathrm{when}\ l=\varepsilon m+\alpha, \ 0\leq\varepsilon\leq n-2, \ 1\leq\alpha\leq m, \\
				-\gamma^{p^r}\tau_{r}\frac{t^{p^r}}{x_{1}^{p^r}\cdots x_{n}^{p^r}} & \mathrm{when}\ l=(n-1)m+\alpha, \ 1\leq\alpha\leq m .
			\end{cases}    
		\end{equation*}
		
		Then
		\begin{equation} \label{(2.16)}
			\nabla^{(1)}(e_{l})=\nabla(e_{l})+\sum_{r=1}^{\infty}p^{p^{r}-1}\nu_{r,l}e_{l}.
		\end{equation}
		Note that $\nu_{r,l}e_{l}\in\Fil^{p^{r}+l/m}\mathcal{C}$.
		Applying Theorem \ref{Theorem 2.8}, we have
		\begin{equation} \label{(2.17)}
			\begin{aligned}
				p^{p^r-1}\nu_{r,l}e_{l}&=\sum_{j=0}^{\min\{mp^{r}+l,mn\}}p^{p^r-1}a(j,r,l)e_{j} \\ &+\sum_{i=1}^{\infty}\sum_{\textbf{r}\in\mathcal{F}_{i}}\sum_{j=0}^{\min\{m\rho(\textbf{r})+l+mp^{r},mn\}}p^{\rho(\textbf{r})+p^r-1}a(j,r,l;\textbf{r})e_{j}\ \ \mathrm{mod}\ D_{t},
			\end{aligned}
		\end{equation}
		where $a(j,r,l)\in\Fil^{p^{r}+\frac{l-j}{m}}\mathcal{O}$ and $a(j,r,l;\textbf{r})\in\Fil^{\rho(\textbf{r})+p^{r}+\frac{l-j}{m}}\mathcal{O}$. Let
		\begin{equation*}
			b(j,l;\textbf{r})=\begin{cases}
				a(j,r_i ,l;(r_1, \cdots,r_{i-1},0\cdots))  & \mathrm{when}\ \textbf{r}=(r_1 ,\cdots ,r_i ,0 ,\cdots)\in\mathcal{F}_{i}\ \mathrm{for}\ i>1  ,   \\
				a(j,r,l) & \mathrm{when}\ \textbf{r}=(r,0,\cdots)\in\mathcal{F}_1 .
			\end{cases}    
		\end{equation*}
		Combining (\ref{(2.16)}) and (\ref{(2.17)}), we conclude the Lemma \ref{Lemma 2.11} holds by this labeling.
	\end{proof}
	
	\begin{rmk}
		One immediately note that the relative cohomology constructed in Section 2 of this weighted family $Kl_{n,m}$ is a one-parameter pullback of a $p$-adic GKZ hypergeometric $F$-isocrystal with trivial multiplicative character, which has been studied in details in \cite{FW23}. Concretely, for the connection equation, if one replace $\gamma$ in all above constructions starting from Section 2 by the classical Dwork's $\pi\in\overline{\mathbb{Q}}_p$ where $\pi^{p-1}=-p$, and replace the construction $H(t,x)$ in (\ref{H}) by $H(t,x)=\pi Kl_{n,m}(t,x)$, then one readily compute that
		$$\nabla=\nabla^{(1)}=t\frac{d}{dt}+\frac{\pi t}{x_1\cdots x_n}.$$
		
		Like above, now $\nabla$ acts on $H^{n}(\mathcal{C},D_{t})$, with respect to the normalized basis $\{e_i\}_{0\leq i\leq nm}$, the corresponding matrix $G_0$ becomes 
		\begin{equation}
			G_0=\left(\begin{array}{cc}
				\mathbf{0} &   B_{m,n}\\
				\pi^{\frac{mn+1}{m}} t\cdot I_{m} & \mathbf{0}
			\end{array}\right)
		\end{equation}
		with scalar equation of $t\frac{d}{dt}-G_0=0$ to be 
		\begin{equation*}
			[t\frac{d}{dt}\prod_{j=0}^{m-1}(t\frac{d}{dt}-j)^{n}-m\pi^{mn+1}t^{m}]y=0.   
		\end{equation*}
		A solution of this scalar equation near $t=0$ is just the generalized hypergeometric series
		\begin{equation*}
			y(t)=
			{}_0F_{mn}\!\left(
			\begin{matrix}
				\\
				\underbrace{\frac{1}{m},\ldots,\frac{1}{m}}_{n},
				\underbrace{\frac{2}{m},\ldots,\frac{2}{m}}_{n},
				\ldots,
				\underbrace{1,\ldots,1}_{n}
			\end{matrix}
			\,;\,
			\frac{\pi^{mn+1}t^m}{m^{mn}}
			\right).
		\end{equation*}
		When $m=1$, this becomes the familiar hyper-Kloosterman connection. Moreover, let 
		\begin{equation*}
			A=
			\begin{pmatrix}
				m & 0 & \cdots & 0 & -1\\
				0 & 1 & \cdots & 0 & -1\\
				\vdots & \vdots & \ddots & \vdots & \vdots\\
				0 & 0 & \cdots & 1 & -1
			\end{pmatrix}
			\in M_{n\times(n+1)}(\mathbb Z),
		\end{equation*}
		with the GKZ twist parameters $\gamma_{1}=\cdots=\gamma_{n}=0$ in [\cite{FW23}, Section 1.1], let $F(x_{1},\cdots, x_{n+1})$ be the solution of the corresponding $A$-hypergeometric system, one will realize the solution $y(t)$ as
		$$y(t)=F(\pi,\cdots,\pi,\pi t).$$
		Thus the connection for $Kl_{n,m}$ is just a codimension 1 instance of the $p$-adic GKZ system.
	\end{rmk}

	\subsection{ Reduced symmetric-power cohomology} \label{Subsection 3.2}
	
	Let $\overline{\mathcal{S}}_{k}:={\rm Sym}^{k}_{\overline{\mathcal{O}}}H^{n}(\overline{\mathcal{C}},\overline{D}_{t})$ be the free $\overline{\mathcal{O}}$(=$\mathbb{F}_{q}[t]$)-module of rank $\binom{mn+k}{k}$ with a basis
	$${\rm Sym}^{k}\bar{\mathcal{B}}=\{\bar{e}^{\underline{i}}=\bar{e}_{0}^{i_{0}}\cdot\cdot\cdot \bar{e}_{mn}^{i_{mn}}|\underline{i}=(i_{0},\cdot\cdot\cdot,i_{mn})\in I_{k}\}.$$
	We define the connection map $\overline{\nabla}^{(1)}:\overline{\mathcal{S}}_{k}\rightarrow\overline{\mathcal{S}}_{k}$ by
	$$\overline{\nabla}^{(1)}(\bar{\xi}\bar{e}^{\underline{i}}):=t\frac{d}{dt}(\bar{\xi})\bar{e}^{\underline{i}}+\sum_{j=0}^{mn}i_{j}\bar{\xi}\bar{e}_{0}^{i_{0}}\cdot\cdot\cdot\bar{e}_{j}^{i_{j}-1}\cdot\cdot\cdot e_{mn}^{i_{mn}}\overline{\nabla}^{(1)}(\bar{e}_{j}),$$
	and define a $\mathbb{F}_{q}[t]$-linear map $\phi_{\overline{G}}:\overline{\mathcal{S}}_{k}\rightarrow\overline{\mathcal{S}}_{k}$ by
	$$\phi_{\overline{G}}(\bar{\xi}\bar{e}^{\underline{i}}):=\sum_{j=0}^{mn}i_{j}\bar{\xi}\bar{e}_{0}^{i_{0}}\cdot\cdot\cdot\bar{e}_{j}^{i_{j}-1}\cdot\cdot\cdot e_{mn}^{i_{mn}}\overline{G}(\bar{e}_{j}).$$
	Recall we have that $\overline{\nabla}^{(1)}(\bar{e}_{i})=\overline{G}(\bar{e}_{i})$ for $0\leq i\leq mn$, so $\overline{\nabla}^{(1)}=t\frac{d}{dt}+\phi_{\overline{G}}$. We define a complex $\Omega^{\bullet}(\overline{\mathcal{S}}_{k},\overline{\nabla}^{(1)})$ by
	$$\Omega^{0}(\overline{\mathcal{S}}_{k},\overline{\nabla}^{(1)})=\overline{\mathcal{S}}_{k},\ \mathrm{and}\ \Omega^{1}(\overline{\mathcal{S}}_{k},\overline{\nabla}^{(1)})=\overline{\mathcal{S}}_{k}\frac{dt}{t}$$
	with the boundary map $\overline{\nabla}^{(1)}(\eta)=\overline{\nabla}^{(1)}(\eta)\frac{dt}{t}$. Denote its cohomology spaces by $H^{i}(\overline{\mathcal{S}}_{k},\overline{\nabla}^{(1)})$ for $i=0,1$. We note that the linear operator $\phi_{\overline{G}}$ is actually the Higgs operator for the logarithmic Higgs field $\phi_{\overline{G}}\frac{dt}{t}$, similarly, we can define the Higgs complex $\Omega^{\bullet}(\overline{\mathcal{S}}_{k},\phi_{\overline{G}})$ and its cohomology $H^{i}(\overline{\mathcal{S}}_{k},\phi_{\overline{G}})$.
	
	Note that $$\det(\lambda I_{mn+1}-\overline{G})=\lambda^{mn+1}-mt^{m}.$$
	Given that $p\nmid m(mn+1)$, all eigenvalues of $\overline{G}$ are $\lambda=m^{1/(mn+1)}t^{m/(mn+1)}\bar{\zeta}_{mn+1}^{i}$ with $0\leq i\leq mn$, where $\bar{\zeta}_{mn+1}$ is a primitive $(mn+1)$-th root of unity in $\overline{\mathbb{F}}_{p}$.
	
	Let $\mathbb{K}:=\overline{\mathbb{F}_{q}}(t^{1/(mn+1)})$. We then can diagonalize the matrix $\overline{G}$ over the extension field $\mathbb{K}$ by $\overline{G}=P\widetilde{G}P^{-1}$, where $$\widetilde{G}=m^{1/(mn+1)}t^{m/(mn+1)}{\rm diag}(1,\bar{\zeta}_{mn+1},\cdot\cdot\cdot,\bar{\zeta}_{mn+1}^{mn})$$ and $P$ is an invertible matrix over $\mathbb{K}$. 
	
	For $(mn+1)\times(mn+1)$ matrix $P$ over $\mathbb{K}$, define ${\rm Sym}^{k}M$ on $\overline{\mathcal{S}}_{k}\otimes\mathbb{K}$ by
	$${\rm Sym}^{k}M(\bar{\xi}\bar{e}^{\underline{i}}):=\bar{\xi}(M\bar{e}_{0})^{i_{0}}\cdot\cdot\cdot (M\bar{e}_{mn})^{i_{mn}},$$
	and define $\phi_{\widetilde{G}}$ similarly to that of $\phi_{\overline{G}}$. Then
	\begin{equation*}
		\begin{aligned}
			\phi_{\overline{G}}(\bar{\xi}\bar{e}^{\underline{i}})&=\sum_{j=0}^{mn}i_{j}\bar{\xi}\bar{e}_{0}^{i_{0}}\cdot\cdot\cdot\bar{e}_{j}^{i_{j}-1}\cdot\cdot\cdot e_{mn}^{i_{mn}}\overline{G}(\bar{e}_{j})    \\
			&=\sum_{j=0}^{mn}i_{j}\bar{\xi}(PP^{-1}\bar{e}_{0})^{i_{0}}\cdot\cdot\cdot(PP^{-1}\bar{e}_{j})^{i_{j}-1}\cdot\cdot\cdot(PP^{-1}\bar{e}_{mn})^{i_{mn}}(P\widetilde{G}P^{-1}\bar{e}_{j}) \\
			&={\rm Sym}^{k}P\circ\phi_{\widetilde{G}}\circ{\rm Sym}^{k}P^{-1}(\bar{\xi}\bar{e}^{\underline{i}}).
		\end{aligned}
	\end{equation*}
	This means $\phi_{\overline{G}}={\rm Sym}^{k}P\circ\phi_{\widetilde{G}}\circ{\rm Sym}^{k}P^{-1}$ as linear operators over $\mathbb{K}$. Since $P$ is an invertible matrix, the injectivity of $\phi_{\bar{G}}$ on $\overline{\mathcal{S}}_{k}$ is equivalent to the injectivity of $\phi_{\widetilde{G}}$ on $\overline{\mathcal{S}}_{k}\otimes\mathbb{K}$.
	
	Recall $$d_{k}(n,m,p):=
	\#\{\underline{i}=(i_{0},\cdot\cdot\cdot,i_{mn})\in I_{k}|\sum_{j=0}^{mn}i_{j}\bar{\zeta}_{mn+1}^{j}= 0\ \mathrm{in}\ \overline{\mathbb{F}}_{p}\}.$$
	
	\begin{thm} \label{d_{k}(n,m,p)=0}
		Suppose $d_{k}(n,m,p)=0$. Then $\phi_{\overline{G}}$ is injective on $\overline{\mathcal{S}}_{k}$. Hence $H^{0}(\overline{\mathcal{S}}_{k},\phi_{\overline{G}})=0$.
	\end{thm}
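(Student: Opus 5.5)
We reduce to the diagonal case using the conjugation identity established just above, $\phi_{\overline{G}}={\rm Sym}^{k}P\circ\phi_{\widetilde{G}}\circ{\rm Sym}^{k}P^{-1}$ over $\mathbb{K}=\overline{\mathbb{F}_{q}}(t^{1/(mn+1)})$. Since $\overline{\mathcal{S}}_{k}$ is a free $\mathbb{F}_q[t]$-module and $\mathbb{F}_q[t]\to\mathbb{K}$ is injective, $\overline{\mathcal{S}}_{k}$ embeds into $\overline{\mathcal{S}}_{k}\otimes\mathbb{K}$. The operator $\phi_{\overline{G}}$ is $\mathbb{F}_q[t]$-linear and extends to this tensor product. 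Because ${\rm Sym}^kP$ is invertible with inverse ${\rm Sym}^kP^{-1}$, injectivity of $\phi_{\overline{G}}$ on $\overline{\mathcal{S}}_{k}$ follows from injectivity of $\phi_{\widetilde{G}}$ on $\overline{\mathcal{S}}_{k}\otimes\mathbb{K}$.

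Next we compute $\phi_{\widetilde{G}}$ on the monomial basis. Let $f_j:=P^{-1}$-transformed eigenvectors, so that in the basis $f_0,\dots,f_{mn}$ we have $\widetilde{G}f_j=\lambda\bar\zeta_{mn+1}^{j}f_j$ with $\lambda=m^{1/(mn+1)}t^{m/(mn+1)}$. (Equivalently, $\widetilde G$ is diagonal in the transported basis.) The monomials $f^{\underline{i}}=f_0^{i_0}\cdots f_{mn}^{i_{mn}}$, $\underline{i}\in I_k$, form a $\mathbb{K}$-basis of $\overline{\mathcal{S}}_{k}\otimes\mathbb{K}$. By the Leibniz-type definition of $\phi$,
$$\phi_{\widetilde{G}}(f^{\underline{i}})=\sum_{j=0}^{mn}i_j f_0^{i_0}\cdots f_j^{i_j-1}\cdots f_{mn}^{i_{mn}}\,\lambda\bar\zeta_{mn+1}^{j}f_j=\lambda\Big(\sum_{j=0}^{mn}i_j\bar\zeta_{mn+1}^{j}\Big)f^{\underline{i}}.$$
Thus $\phi_{\widetilde{G}}$ is diagonal in this basis, with eigenvalues $\lambda\sum_j i_j\bar\zeta_{mn+1}^j$, where $i_j$ is read in $\mathbb{F}_p$.

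Since $p\nmid m$, $\lambda\neq0$ in $\mathbb{K}$. The hypothesis $d_k(n,m,p)=0$ says that $\sum_j i_j\bar\zeta_{mn+1}^j\ne0$ for every $\underline{i}\in I_k$. Hence every diagonal entry is nonzero, $\phi_{\widetilde{G}}$ is invertible on $\overline{\mathcal{S}}_{k}\otimes\mathbb{K}$, and so $\phi_{\overline{G}}$ is injective on $\overline{\mathcal{S}}_{k}$. Since $H^0(\overline{\mathcal{S}}_{k},\phi_{\overline{G}})=\ker\phi_{\overline{G}}$, it vanishes.

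The only delicate point is the bookkeeping that ${\rm Sym}^kP$ really intertwines the Higgs operators. This is the displayed identity already derived in the text, and it uses only that $\phi$ is a derivation-type extension of the linear map on $H^n$, compatible with any change of basis.
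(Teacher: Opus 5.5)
Your proof is correct and follows the paper's argument. Like the paper, you pass to $\mathbb{K}$ via the conjugation identity, compute $\phi_{\widetilde{G}}$ as a diagonal operator on the symmetric monomials of an eigenbasis, and use $d_k(n,m,p)=0$ to see that every diagonal entry $\lambda\sum_j i_j\bar\zeta_{mn+1}^{j}$ is nonzero. Your extra remarks, namely the embedding $\overline{\mathcal{S}}_k\hookrightarrow\overline{\mathcal{S}}_k\otimes\mathbb{K}$ and $\lambda\neq 0$ because $p\nmid m$, only make explicit what the paper leaves implicit.
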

	\begin{proof}
		Let $\{v_{i}\}_{0\leq i\leq mn}$ be an eigenbasis of $\widetilde{G}$ in $H^{n}(\overline{\mathcal{C}},\overline{D}_{t}^{(1)})\otimes\mathbb{K}$ with eigenvalues $$\{m^{1/(mn+1)}t^{m/(mn+1)}\bar{\zeta}_{mn+1}^{i}\}_{0\leq i\leq mn}.$$ 
		Clearly,  $\overline{\mathcal{S}}_{k}\otimes\mathbb{K}$ admits a basis $\{v^{\underline{i}}=v_{0}^{i_{0}}\cdot\cdot\cdot v_{mn}^{i_{mn}}|\underline{i}\in I_{k}\}$.
		Then we have that
		\begin{equation*}
			\begin{aligned}
				\phi_{\widetilde{G}}(v^{\underline{i}})&=\sum_{j=0}^{mn}i_{j}v_{0}^{i_{0}}\cdot\cdot\cdot v_{j}^{i_{j}-1}\cdot\cdot\cdot v_{mn}^{i_{mn}}(\widetilde{G}v_{j}) \\
				&=\sum_{j=0}^{mn}i_{j}v_{0}^{i_{0}}\cdot\cdot\cdot v_{j}^{i_{j}-1}\cdot\cdot\cdot v_{mn}^{i_{mn}}(m^{\frac{1}{mn+1}}t^{\frac{m}{mn+1}}\bar{\zeta}_{mn+1}^{j}v_{j}) \\
				&=(\sum_{j=0}^{mn}i_{j}\bar{\zeta}_{mn+1}^{j})m^{\frac{1}{mn+1}}t^{\frac{m}{mn+1}}v^{\underline{i}}.
			\end{aligned}
		\end{equation*}
		By $d_{k}(n,m,p)=0$, we get $\phi_{\widetilde{G}}$ is injective.
		Hence $\phi_{\overline{G}}$ is injective.
	\end{proof}
	
	For $\underline{i}=(i_{0},\cdot\cdot\cdot,i_{mn})\in\mathbb{Z}^{mn+1}_{\geq 0}$, we denote $|\underline{i}|:=i_{0}+i_{1}+\cdot\cdot\cdot i_{mn}$, and $$\omega(\underline{i}):=\frac{0}{m}i_{0}+\frac{1}{m}i_{1}+\frac{2}{m}i_{2}+\cdot\cdot\cdot+\frac{mn}{m}i_{mn}.$$ For a monomial $t^{r}\bar{e}^{\underline{i}}\in\overline{\mathcal{S}}_{k}$,  we define the symmetric total weight function for the pair $(r,\underline{i})\in\mathbb{Z}_{\geq 0}\times I_k$ by $$W(r;\underline{i}):=\frac{(mn+1)r}{m}+w(\underline{i}).$$
	Note that this weight function $W$ also has image in $(1/m)\mathbb{Z}_{\geq 0}$. For any $N\in(1/m)\mathbb{Z}_{\geq 0}$ we define a weighted filtration on $\overline{\mathcal{S}}_{k}$ as follows:
	$$\Fil\overline{\mathcal{S}}_{k}^{N}:=\{\mathbb{F}_{q}\text{-}\mathrm{
		vector}\ \mathrm{space}\ \mathrm{generated}\ \mathrm{by}\ t^{r}\bar{e}^{\underline{i}}\ \mathrm{with}\ W(r;\underline{i})\leq N \}$$
	and
	$$\overline{\mathcal{S}}_{k}^{(N)}:=\{\mathbb{F}_{q}\text{-}\mathrm{
		vector}\ \mathrm{space}\ \mathrm{generated}\ \mathrm{by}\ t^{r}\bar{e}^{\underline{i}}\ \mathrm{with}\ W(r;\underline{i})=N \}.$$
	Observe that $\overline{\mathcal{S}}_{k}\simeq\bigoplus_{mN=0}^{\infty}\overline{\mathcal{S}}_{k}^{(N)}$ as $(1/m)\mathbb{Z}_{\geq 0}$\text{-}graded algebra over $\mathbb{F}_{q}$.  Assuming $d_{k}(n,m,p)=0$, by Theorem \ref{d_{k}(n,m,p)=0} we have the following exact sequence:
	$$0\longrightarrow\overline{\mathcal{S}}_{k}\xlongrightarrow[]{\phi_{\overline{G}}}\overline{\mathcal{S}}_{k}\frac{dt}{t}\longrightarrow\overline{\mathcal{S}}_{k}\frac{dt}{t}/\phi_{\overline{G}}\overline{\mathcal{S}}_{k}=H^{1}(\overline{\mathcal{S}}_{k},\phi_{\overline{G}})\longrightarrow0.$$
	Let $P(T)$ be the Poincar\'e series of $\overline{\mathcal{S}}_{k}$, and $Q(T)$ the Poincar\'e series of $H^{1}(\overline{\mathcal{S}}_{k},\phi_{\overline{G}})$. The Poincar\'e series of Image($\phi_{\overline{G}}$) is $T^{m}P(T)$ since $\phi_{\overline{G}}$ is injective and of weight $1$. Hence $Q(T)=(1-T^{m})P(T)$.
	
	For each $N\in(1/m)\mathbb{Z}_{\geq 0}$, we let $d_{N}:=\#\{\underline{i}\in I_{k}|\omega(\underline{i})=N\}$. Then $P(T)$ may be  written as
	\begin{equation*}
		\begin{aligned}
			P(T)&=\sum_{N\in(1/m)\mathbb{Z}_{\geq 0}}\dim_{\mathbb{F}_{q}}\overline{\mathcal{S}}_{k}^{(N)}T^{mN}
			=\sum_{N\in(1/m)\mathbb{Z}_{\geq 0}}(\sum_{r=0}^{\infty}d_{N-\frac{(mn+1)r}{m}})T^{mN}\\
			&=\sum_{N\in(1/m)\mathbb{Z}_{\geq 0}}\sum_{r=0}^{\infty}d_{M}T^{mM+(mn+1)r}
			=(\sum_{N\in(1/m)\mathbb{Z}_{\geq 0}}d_{M}T^{mM})(\sum_{r=0}^{\infty}T^{(mn+1)r}) \\
			&=\frac{\sum_{\underline{i}\in I_{k}}T^{m\omega(\underline{i})}}{1-T^{mn+1}}.
		\end{aligned}
	\end{equation*}
	Let $R(T):=\sum_{\underline{i}\in I_{k}}T^{\sum_{j=0}^{mn}j\cdot i_{j}}$.
	Then $P(T)=\frac{R(T)}{1-T^{mn+1}}$ and $Q(T)=\frac{(1-T^{m})R(T)}{1-T^{mn+1}}$.
	
	\begin{thm} \label{Theorem 3.3}
		Suppose $d_{k}(n,m,p)=0$, then $H^{1}(\overline{\mathcal{S}}_{k},\phi_{\overline{G}})$ is an $\mathbb{F}_{q}$\text{-}vector space of dimension $\frac{m}{mn+1}\binom{mn+k}{k}$.
	\end{thm}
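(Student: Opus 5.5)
The plan is to compute $\dim_{\mathbb{F}_q}H^{1}(\overline{\mathcal{S}}_{k},\phi_{\overline{G}})$ as $Q(1)$. The preceding discussion gives the Poincar\'e series
\[
Q(T)=\frac{(1-T^{m})R(T)}{1-T^{mn+1}}.
\]
It was obtained from the exact sequence $0\to\overline{\mathcal{S}}_{k}\xrightarrow{\phi_{\overline{G}}}\overline{\mathcal{S}}_{k}\frac{dt}{t}\to H^{1}\to 0$. Injectivity of $\phi_{\overline{G}}$ comes from Theorem \ref{d_{k}(n,m,p)=0}, and $\phi_{\overline{G}}$ is homogeneous of weight $1$, i.e.\ it shifts $T$-degree by $m$. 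Each graded piece $\overline{\mathcal{S}}_{k}^{(N)}$ is finite dimensional, so $Q$ records graded dimensions. It remains to show two things: that $Q(T)$ is a polynomial, so the total dimension is finite, and that $Q(1)=\frac{m}{mn+1}\binom{mn+k}{k}$.

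\textbf{Step 1: $Q$ is a polynomial.} I would show that $R(T)$ is divisible by $\frac{1-T^{mn+1}}{1-T^{m}}$ in a suitable sense, via the $(mn+1)$-th roots of unity. For a complex primitive $d$-th root of unity $\zeta$ with $d\mid mn+1$, $d>1$, consider
\[
R(\zeta)=\sum_{\underline{i}\in I_k}\zeta^{\sum_j j i_j}=h_k(1,\zeta,\dots,\zeta^{mn}),
\]
the complete homogeneous symmetric polynomial evaluated on all $(mn+1)$-th roots of unity, each repeated $(mn+1)/d$ times up to permutation. Its generating function is $\prod_{j=0}^{mn}(1-\zeta^{j}x)^{-1}=(1-x^{d})^{-(mn+1)/d}$. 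Hence $R(\zeta)$ vanishes unless $d\mid k$, and in that case it equals $\binom{(mn+1)/d+k/d-1}{k/d}$.

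So $R$ need not vanish at all such roots. Divisibility therefore has to be checked root by root against the multiplicity of $\zeta$ as a root of $(1-T^{mn+1})/(1-T^{m})$. Since $\gcd(m,mn+1)=1$, that quotient is exactly $\prod_{1\neq\zeta^{mn+1}=1}(T-\zeta)$ up to sign, and $1-T^{m}$ shares no roots with it. I expect this to be the main obstacle: for $d\mid k$ the value $R(\zeta)\neq 0$, so pure polynomiality fails in general. I would then fall back on the hypothesis $d_k(n,m,p)=0$. Note that when $d\mid k$, the vector $\underline{i}$ with $i_{j}=k/d$ on the indices $j$ that are multiples of $(mn+1)/d$ gives $\sum_j i_j\bar\zeta^{j}=(k/d)\sum_{\text{$d$-th roots}}=0$. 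So $d_k=0$ forces $\gcd(k,mn+1)=1$, and then $R(\zeta)=0$ at every nontrivial $(mn+1)$-th root of unity. Thus $Q(T)\in\mathbb{Z}[T]$, consistent with Remark \ref{rmk4.5}. Alternatively, finiteness of $H^1$ follows directly: $\phi_{\overline G}$ is injective with determinant a nonzero multiple of a power of $t$ over $\mathbb{F}_q[t]$, so the cokernel is torsion and finitely generated.

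\textbf{Step 2: evaluate at $T=1$.} Write $Q(T)=(1-T^{m})R(T)/(1-T^{mn+1})$ and let $T\to1$. The factor $(1-T^{m})/(1-T^{mn+1})$ tends to $m/(mn+1)$, and $R(1)=\#I_k=\binom{mn+k}{k}$. Therefore
\[
\dim_{\mathbb{F}_q}H^{1}(\overline{\mathcal{S}}_{k},\phi_{\overline{G}})=Q(1)=\frac{m}{mn+1}\binom{mn+k}{k}.
\]
As a cross-check via determinants: $\det\phi_{\overline G}$ on the rank $\binom{mn+k}{k}$ module has $t$-degree equal to $\frac{m}{mn+1}\binom{mn+k}{k}$. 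This holds because its eigenvalues are $c_{\underline i}\,t^{m/(mn+1)}$ with $c_{\underline{i}}\neq0$ by $d_k=0$, and the $\mathbb{F}_q$-length of the cokernel of an injective endomorphism of $\mathbb{F}_q[t]^{r}$ is the $t$-adic valuation of its determinant. This is the same number, confirming the result.
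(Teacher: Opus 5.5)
Your proposal is correct and follows the paper's proof: $d_k(n,m,p)=0$ forces $\gcd(k,mn+1)=1$ via the same choice of $\underline{i}$, this makes $Q(T)$ a polynomial, and the dimension is $Q(1)=\frac{m}{mn+1}\binom{mn+k}{k}$. There are two differences: you prove the needed divisibility directly from $\prod_{j=0}^{mn}(1-\zeta^{j}x)^{-1}=(1-x^{d})^{-(mn+1)/d}$, where the paper simply cites \cite[Lemma 4.8]{HS24}; and your determinant cross-check, $\dim_{\mathbb{F}_q}\operatorname{coker}\phi_{\overline{G}}=\deg_t\det\phi_{\overline{G}}$ with $\det\phi_{\overline{G}}$ a nonzero constant times $t^{\frac{m}{mn+1}\binom{mn+k}{k}}$, is by itself a complete proof of the total dimension, though unlike the Poincar\'e series it does not give the graded pieces $h_i(n,m,k)$ used later.
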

	\begin{proof}
		When $d_k (n,m,p)=0$, we claim that $\gcd(mn+1,k)=1$, otherwise let $d=\gcd(mn+1,k)$, let
		\begin{equation*}
			i_j =\begin{cases}
				\frac{k}{d} & j=\frac{\alpha(mn+1)}{d}\ \mathrm{for}\ 0\leq \alpha\leq d-1, \\
				0 & \mathrm{otherwise}.
			\end{cases}    
		\end{equation*}
		When $p\nmid mn+1$, we have $p\nmid d$, so the primitive $d$-th root of unity,$\bar{\zeta}_d$ ,exists in $\overline{\mathbb{F}}_p$. One immediately obtains that $(i_0 ,\cdots, i_{mn})\in I_k$ and 
		$$\sum_{j=0}^{mn}i_{j}\bar{\zeta}_{mn+1}^{j}=\frac{k}{d}\sum_{\alpha=0}^{d-1}\bar{\zeta}_{d}^{\alpha}=0$$
		in $\overline{\mathbb{F}}_p$, contradicting $d_{k}(n,m,p)=0$, so we have $\gcd(mn+1,k)=1$. Apply [\cite{HS24}, Lemma 4.8], $$Q(T)=\frac{(1-T^m)R(T)}{1-T^{mn+1}}=\frac{(1+T+\cdots +T^{m-1})R(T)}{1+T+\cdots +T^{mn}}$$ is a polynomial with integer coefficients.
		Note that $R(1)=\binom{mn+k}{k}$ is the $\mathbb{F}_{q}[t]$\text{-}rank of $\overline{\mathcal{S}}_{k}$. Then
		$$\dim_{\mathbb{F}_{q}}H^{1}(\overline{\mathcal{S}}_{k},\phi_{\overline{G}})=Q(1)=\frac{m}{mn+1}R(1)=\frac{m}{mn+1}\binom{mn+k}{k}.$$
	\end{proof}
	
	\begin{rmk} \label{rmk4.5}
		Under the assumption $d_k (n,m,p)=0$, follows from the calculation of the Hilbert series $Q(T)$ we immediately have 
		$$h_{i}(n,m,k)=\#\{(r,\underline{j})\in B_k |W(r;\underline{j})=\frac{i}{m}\}$$
		are non-negative integers and $h_i (n,m,k)$ are independent of the choice of basis index set $B_k$. This is why in the introduction we call the lower convex hull of $(0,0)$ and points in (\ref{Hodge}) combinatorial Hodge polygon. We hope further research in irregular Hodge theory will give a geometric interpretation for the numbers $h_i (n,m,k)$ as in the $Kl_{n,1}$ case studied by \cite{Qin24}.
	\end{rmk}
	
	Now Let $B_{k}$ be a subset of $\mathbb{Z}_{\geq 0}\times I_{k}$ with cardinality $\frac{m}{mn+1}\binom{mn+k}{k}$ such that $\{t^{r}\bar{e}^{\underline{i}}\}_{(r,\underline{i})\in B_{k}}$ is a basis of $H^{1}(\overline{\mathcal{S}}_{k},\phi_{\overline{G}})$. Note that 
	$\phi_{\overline{G}}$ maps weight $N$ elements to weight $N+1$. We then have the following result.
	\begin{cor} \label{Corollary 3.4}
		Let $d_{k}(n,m,p)=0$. For $\bar{\xi}\in\overline{\mathcal{S}}_{k}^{(N)}$, there exist $\bar{a}(r,\underline{i})\in\mathbb{F}_{q}$ with $(r,\underline{i})\in B_{k}$, $W(r;\underline{i})=N$, and $\bar{\zeta}\in\overline{\mathcal{S}}_{k}^{(N-1)}$, such that
		$$\bar{\xi}=\sum_{\begin{subarray}{c}
				(r,\underline{i})\in B_{k}  \\
				W(r;\underline{i})=N
		\end{subarray}}\bar{a}(r,\underline{i})t^{r}\bar{e}^{\underline{i}}+\phi_{\overline{G}}(\bar{\zeta}).$$
	\end{cor}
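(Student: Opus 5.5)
The proof will rest on the grading of $\overline{\mathcal{S}}_{k}$ and the fact that $\phi_{\overline{G}}$ is homogeneous of weight $1$. First I check homogeneity. By (\ref{(2.18)}), $\overline{G}$ sends $\bar{e}_0$ to $m\bar{e}_m$, which raises the weight $\omega$ from $0$ to $1$. It sends $\bar{e}_{\varepsilon m+j}$ to $\bar{e}_{(\varepsilon+1)m+j}$, which raises the weight by exactly $1$. For $\varepsilon=n-1$ it sends $\bar{e}_{(n-1)m+j}$ to $t\bar{e}_{j-1}$. Here the total weight goes from $\frac{(n-1)m+j}{m}$ to $\frac{mn+1}{m}+\frac{j-1}{m}=\frac{nm+j}{m}$, again an increase of $1$. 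Since $\phi_{\overline{G}}$ is $\mathbb{F}_q[t]$-linear and acts on monomials $\bar{e}^{\underline{i}}$ by the Leibniz rule, every monomial of $\phi_{\overline{G}}(t^r\bar{e}^{\underline{i}})$ has total weight $W(r;\underline{i})+1$. Hence $\phi_{\overline{G}}(\overline{\mathcal{S}}_k^{(N-1)})\subset\overline{\mathcal{S}}_k^{(N)}$.

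Next I deduce that $H^{1}(\overline{\mathcal{S}}_{k},\phi_{\overline{G}})=\overline{\mathcal{S}}_k/\phi_{\overline{G}}\overline{\mathcal{S}}_k$ is graded, with homogeneous pieces
$$H^{1}(\overline{\mathcal{S}}_{k},\phi_{\overline{G}})^{(N)}=\overline{\mathcal{S}}_k^{(N)}/\phi_{\overline{G}}\big(\overline{\mathcal{S}}_k^{(N-1)}\big),$$
where $\overline{\mathcal{S}}_k^{(N-1)}=0$ if $N<1$. Each piece is finite-dimensional over $\mathbb{F}_q$, because only finitely many pairs $(r,\underline{i})$ have a given weight.

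The main obstacle is the choice of $B_k$. An arbitrary set of monomials that forms a basis of $H^1$ need not be adapted to the grading, although any monomial class is automatically homogeneous. So I will fix $B_k$ concretely: for each $N$, choose monomials $t^r\bar{e}^{\underline{i}}$ with $W(r;\underline{i})=N$ whose images form a basis of $H^{1}(\overline{\mathcal{S}}_{k},\phi_{\overline{G}})^{(N)}$. This is possible because $\overline{\mathcal{S}}_k^{(N)}$ is spanned by such monomials, so their images span the quotient and a basis can be extracted from them. Let $B_k$ be the union of these choices over all $N$. Since the sum over $N$ is direct, these classes form a basis of $H^1$, and by Theorem \ref{Theorem 3.3} the cardinality of $B_k$ is $Q(1)=\frac{m}{mn+1}\binom{mn+k}{k}$. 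I will also point out that any monomial basis of $H^1$ is in fact of this form. Indeed, the images of monomials of weight $N$ lie in the $N$-th graded piece. If the monomial classes of some weight $N$ failed to span the $N$-th piece, then a homogeneous class of weight $N$ could not be written using the basis: projecting any expression onto the $N$-th component would give a contradiction.

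With this choice the corollary follows directly. Given $\bar{\xi}\in\overline{\mathcal{S}}_k^{(N)}$, its class in $H^{1}(\overline{\mathcal{S}}_{k},\phi_{\overline{G}})^{(N)}$ is an $\mathbb{F}_q$-combination $\sum\bar{a}(r,\underline{i})t^r\bar{e}^{\underline{i}}$ over those $(r,\underline{i})\in B_k$ with $W(r;\underline{i})=N$. The difference between $\bar{\xi}$ and this combination lies in $\overline{\mathcal{S}}_k^{(N)}\cap\phi_{\overline{G}}(\overline{\mathcal{S}}_k)$. Decomposing a preimage $\bar{\zeta}'$ into homogeneous components and using homogeneity of $\phi_{\overline{G}}$, only the component $\bar{\zeta}\in\overline{\mathcal{S}}_k^{(N-1)}$ contributes in weight $N$. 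Moreover, by injectivity of $\phi_{\overline{G}}$ (Theorem \ref{d_{k}(n,m,p)=0}), the other components must vanish. This gives the stated decomposition.
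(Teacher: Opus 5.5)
Your proposal is correct and takes essentially the paper's approach: the paper's only justification is the remark that $\phi_{\overline{G}}$ raises the weight by exactly $1$, so that $H^{1}(\overline{\mathcal{S}}_{k},\phi_{\overline{G}})$ inherits the grading. You fill in the details the paper omits, in particular the useful check that any monomial basis indexed by $B_k$ is automatically adapted to the grading. Injectivity of $\phi_{\overline{G}}$ is not needed in your last step, since you can simply take $\bar{\zeta}$ to be the weight-$(N-1)$ component of any preimage.
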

	\par Recall $\overline{\nabla}^{(1)}=t\frac{d}{dt}+\phi_{\overline{G}}$. Combining Theorem \ref{d_{k}(n,m,p)=0}, Theorem \ref{Theorem 3.3} and Corollary \ref{Corollary 3.4}, we obtain:
	\begin{thm} \label{Theorem 3.5}
		Let $d_{k}(n,m,p)=0$. Then $H^{0}(\overline{\mathcal{S}}_{k},\overline{\nabla}^{(1)})=0$  and $H^{1}(\overline{\mathcal{S}}_{k},\overline{\nabla}^{(1)})$ is an $\mathbb{F}_{q}$\text{-}vector space of dimension $\frac{m}{mn+1}\binom{mn+k}{k}$ with basis $\{t^{r}\bar{e}^{\underline{i}}\}_{(r,\underline{i})\in B_{k}}$. For any $\bar{\xi}\in\Fil^{N}\overline{\mathcal{S}}_{k}$, there exist $\bar{a}(r,\underline{i})\in\mathbb{F}_{q}$ with $(r,\underline{i})\in B_{k}$, $W(r;\underline{i})\le N$ with $\bar{\zeta}\in\Fil^{(N-1)}\overline{\mathcal{S}}_{k}$, such that
		$$\bar{\xi}=\sum_{\begin{subarray}{c}
				(r,\underline{i})\in B_{k}  \\
				W(r;\underline{i})\leq N
		\end{subarray}}\bar{a}(r,\underline{i})t^{r}\bar{e}^{\underline{i}}+\overline{\nabla}^{(1)}(\bar{\zeta}).$$
	\end{thm}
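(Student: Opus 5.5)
The plan is to deduce Theorem \ref{Theorem 3.5} from the Higgs-complex statements (Theorem \ref{d_{k}(n,m,p)=0}, Theorem \ref{Theorem 3.3}, Corollary \ref{Corollary 3.4}) by a filtration argument. The key observation is that $\overline{\nabla}^{(1)}=t\frac{d}{dt}+\phi_{\overline{G}}$, where $t\frac{d}{dt}$ preserves the grading pieces $\overline{\mathcal{S}}_{k}^{(N)}$: it sends $t^r\bar e^{\underline i}$ to $r t^r\bar e^{\underline i}$. By contrast, $\phi_{\overline{G}}$ maps $\overline{\mathcal{S}}_{k}^{(N)}$ into $\overline{\mathcal{S}}_{k}^{(N+1)}$. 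This last fact follows from (\ref{(2.18)}): $\overline G(\bar e_i)$ has weight $i/m+1$, since $\bar e_0\mapsto m\bar e_m$, $\bar e_{\varepsilon m+j}\mapsto \bar e_{(\varepsilon+1)m+j}$, and $\bar e_{(n-1)m+j}\mapsto t\bar e_{j-1}$, the last having weight $(mn+1)/m+(j-1)/m=(n-1)m/m+j/m+1$. Hence $\phi_{\overline{G}}$ is the top graded part of $\overline{\nabla}^{(1)}$ with respect to the increasing filtration $\Fil^N\overline{\mathcal{S}}_k$, and $\overline{\nabla}^{(1)}\Fil^N\subset \Fil^{N+1}$.

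For $H^0=0$: let $\bar\xi\neq 0$ with $\overline{\nabla}^{(1)}\bar\xi=0$, and let $N$ be the largest weight with a nonzero component $\bar\xi_N$. The weight-$(N+1)$ component of $\overline{\nabla}^{(1)}\bar\xi$ is exactly $\phi_{\overline{G}}(\bar\xi_N)$, because $t\frac{d}{dt}$ contributes only in weights $\le N$. So $\phi_{\overline{G}}(\bar\xi_N)=0$, which contradicts injectivity (Theorem \ref{d_{k}(n,m,p)=0}).

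For the decomposition: induct on $N\in\frac1m\mathbb{Z}_{\ge0}$. Take $\bar\xi\in\Fil^N\overline{\mathcal{S}}_k$ and write $\bar\xi=\bar\xi_N+\bar\xi'$ with $\bar\xi'\in\Fil^{N-1/m}$. Corollary \ref{Corollary 3.4} gives $\bar\xi_N=\sum \bar a(r,\underline i)t^r\bar e^{\underline i}+\phi_{\overline{G}}(\bar\zeta_N)$ with $\bar\zeta_N\in\overline{\mathcal{S}}_k^{(N-1)}$. Replace $\phi_{\overline{G}}(\bar\zeta_N)$ by $\overline{\nabla}^{(1)}(\bar\zeta_N)-t\frac{d}{dt}\bar\zeta_N$; the error term has weight $N-1\le N-1/m$. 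Now apply induction to $\bar\xi'-t\frac{d}{dt}\bar\zeta_N\in\Fil^{N-1/m}$. The accumulated $\bar\zeta$ lies in $\Fil^{N-1}$. Negative-weight pieces vanish, which gives the base case. This shows that $\{t^r\bar e^{\underline i}\}_{B_k}$ spans $H^1$.

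For linear independence, suppose $\sum \bar a(r,\underline i)t^r\bar e^{\underline i}=\overline{\nabla}^{(1)}(\bar\zeta)$ with not all $\bar a=0$. Let $N$ be the maximal weight with some $\bar a\ne0$, and let $M$ be the top weight of $\bar\zeta$. If $M+1>N$, the top component gives $\phi_{\overline{G}}(\bar\zeta_M)=0$, so $\bar\zeta_M=0$ by injectivity; descending, we may assume $M+1\le N$. Comparing weight-$N$ components then gives $\sum_{W=N}\bar a\, t^r\bar e^{\underline i}=\phi_{\overline{G}}(\bar\zeta_{N-1})$, contradicting the independence of the basis of $H^1(\overline{\mathcal{S}}_k,\phi_{\overline{G}})$ in that graded piece. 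The dimension count then follows from Theorem \ref{Theorem 3.3}. The main care needed is in this weight bookkeeping, in particular checking that $\phi_{\overline{G}}$ is exactly homogeneous of weight $1$ so that the top-degree comparison is valid; everything else is routine.
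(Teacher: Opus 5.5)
Your proposal is correct and follows the paper's approach: the paper obtains this theorem by combining the injectivity of $\phi_{\overline{G}}$, Theorem \ref{Theorem 3.3} and Corollary \ref{Corollary 3.4} through the decomposition $\overline{\nabla}^{(1)}=t\frac{d}{dt}+\phi_{\overline{G}}$. You write out the weight bookkeeping the paper leaves implicit: $t\frac{d}{dt}$ has weight $0$ and $\phi_{\overline{G}}$ is homogeneous of weight $1$, top-graded components are compared for $H^0$ and for independence, and spanning follows by descending induction on $N$.
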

	
	\subsection{Symmetric power cohomology} 
	
	Let $\mathcal{S}_{k}$ denote $\mathcal{S}_{k,t^{p^{s}}}$ for the case $s=0$.
	Then from the definition of $\mathcal{S}_{k}$ we may write it as 
	$$\mathcal{S}_{k}=\{\sum_{(r,\underline{i})\in\mathbb{Z}_{\geq 0}\times I_{k}}a(r,\underline{i})\gamma^{\frac{(mn+1)r}{m}}t^{r}e^{\underline{i}}|a(r,\underline{i})\in\mathfrak{D}_q, a(r,\underline{i})\rightarrow 0\ {\rm as}\ W(r,\underline{i})\rightarrow \infty\}.$$
	For each $N\in(1/m)\mathbb{Z}_{\geq 0}$, define an increasing filtration on $\mathcal{S}_{k}$ by
	$$\Fil^{N}\mathcal{S}_{k}=\{\mathfrak{D}_q\text{-}\mathrm{module}\ \mathrm{generated}\ \mathrm{by}\ \gamma^{\frac{(mn+1)r}{m}}t^{r}e^{\underline{i}}\ \mathrm{with}\ (r,\underline{i})\in\mathbb{Z}_{\geq 0}\times I_{k}\  {\rm and}\ W(r;\underline{i})\leq N\}$$
	We linearly extend $\nabla^{(1)}$ to $\mathcal{S}_{k}$ by
	$$\nabla^{(1)}(\gamma^{\frac{(mn+1)r}{m}}t^{r}e^{\underline{i}}):=r\gamma^{\frac{(mn+1)r}{m}}t^{r}e^{\underline{i}}+\gamma^{\frac{(mn+1)r}{m}}t^{r}\sum_{l=0}^{mn}i_{l}e_{0}^{i_{0}}\cdot\cdot\cdot e_{l}^{i_{l}-1}\cdot\cdot\cdot e_{mn}^{i_{mn}}\nabla^{(1)}(e_{l}).$$
	Note that the complex $\Omega^{\bullet}(\mathcal{S}_{k},\nabla)\ \mathrm{mod}\ \gamma^{1/mq}$ with respect to the basis $\{\gamma^{\frac{(mn+1)r}{m}}t^{r}e^{\underline{i}}\}_{(r,\underline{i})\in\mathbb{Z}_{\geq 0}\times I_{k}}$ is isomorphic to the complex $\Omega^{\bullet}(\overline{\mathcal{S}}_{k},\overline{\nabla}^{(1)})$ under the reduction $a(r,\underline{i})\gamma^{\frac{(mn+1)r}{m}}t^{r}e^{\underline{i}}\mapsto\bar{a}(r,\underline{i})t^{r}\bar{e}^{\underline{i}}$.
	Then a standard lifting argument together with Theorem \ref{Theorem 3.5} show the following.
	\begin{lem} \label{Lemma 3.6}
		Let $d_{k}(n,m,p)=0$. Then for any $\xi\in\Fil^{N}\mathcal{S}_{k}$, there exist $a(r,\underline{i})\in\mathfrak{D}_q$ with $(r,\underline{i})\in B_{k}$ and $W(r;\underline{i})\leq N$, and $\zeta\in\Fil^{N-1}\mathcal{S}_{k}$, such that
		$$\xi=\sum_{\begin{subarray}{c}
				(r,\underline{i})\in B_{k}  \\
				W(r;\underline{i})\leq N
		\end{subarray}}a(r,\underline{i})\gamma^{\frac{(mn+1)r}{m}}t^{r}e^{\underline{i}}+\nabla^{(1)}(\zeta).$$
	\end{lem}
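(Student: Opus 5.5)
The plan is to lift the decomposition of Theorem \ref{Theorem 3.5} one $\gamma^{1/mq}$-adic step at a time and then pass to the limit. Write $\pi_0:=\gamma^{1/mq}$ for the uniformizer of $\mathfrak{D}_q$. The key compatibility is the isomorphism between $\Omega^{\bullet}(\mathcal{S}_k,\nabla^{(1)})\bmod \pi_0$ and $\Omega^{\bullet}(\overline{\mathcal{S}}_k,\overline{\nabla}^{(1)})$, given by $a(r,\underline{i})\gamma^{\frac{(mn+1)r}{m}}t^re^{\underline{i}}\mapsto \bar a(r,\underline{i})t^r\bar e^{\underline{i}}$. Under this map $\Fil^N\mathcal{S}_k$ reduces onto $\Fil^N\overline{\mathcal{S}}_k$.

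First I check that $\nabla^{(1)}$ raises the filtration by at most $1$, that is, $\nabla^{(1)}(\Fil^{N-1}\mathcal{S}_k)\subset \Fil^N\mathcal{S}_k$. The term $t\frac{d}{dt}$ preserves weight. From the explicit formula for $G_0$:
\begin{itemize}
\item $e_0\mapsto me_m$, raising $\omega$ by $1$;
\item $e_{\varepsilon m+j}\mapsto e_{(\varepsilon+1)m+j}$, raising $\omega$ by $1$;
\item $e_{(n-1)m+j}\mapsto \gamma^{\frac{mn+1}{m}}te_{j-1}$, which changes $W$ by $\frac{mn+1}{m}+\frac{j-1}{m}-\frac{(n-1)m+j}{m}=1$.
\end{itemize}
Moreover the powers of $\gamma$ match the normalization $\gamma^{(mn+1)r/m}t^r$, so every coefficient stays in $\mathfrak{D}_q$. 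Thus $\nabla^{(1)}$ is homogeneous of weight exactly $1$, in the sense that it maps the $\mathfrak{D}_q$-span of weight-$N$ monomials into the span of weight-$(N+1)$ monomials.

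Next comes the lifting step. Given $\xi\in\Fil^N\mathcal{S}_k$, reduce to $\bar\xi\in\Fil^N\overline{\mathcal{S}}_k$ and apply Theorem \ref{Theorem 3.5}. This gives $\bar a(r,\underline{i})$ for $(r,\underline{i})\in B_k$ with $W(r;\underline{i})\le N$, and $\bar\zeta\in\Fil^{N-1}\overline{\mathcal{S}}_k$. Lift each $\bar a$ to the Teichm\"uller representative, or any lift, in $\mathfrak{D}_q$. Lift $\bar\zeta$ monomialwise to $\zeta_0\in\Fil^{N-1}\mathcal{S}_k$. The remainder
\[
\xi_1:=\xi-\sum a_0(r,\underline{i})\gamma^{\frac{(mn+1)r}{m}}t^re^{\underline{i}}-\nabla^{(1)}(\zeta_0)
\]
then lies in $\pi_0\mathcal{S}_k$, and by the previous step also in $\Fil^N\mathcal{S}_k$. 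Hence $\xi_1=\pi_0\xi_1'$ with $\xi_1'\in\Fil^N\mathcal{S}_k$, because the filtration is defined monomialwise and is saturated, so dividing by $\pi_0$ preserves it. Iterating produces coefficient series $\sum_j\pi_0^ja_j(r,\underline{i})$ and $\zeta=\sum_j\pi_0^j\zeta_j$.

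The main point to check is convergence. The issue is that $\Fil^N\mathcal{S}_k$ contains infinite sums, since $\xi$ may have infinitely many monomials; one should note that $\Fil^N$ involves only the finitely many $(r,\underline{i})$ with $W\le N$. Because $W\ge\frac{(mn+1)r}{m}$ and $I_k$ is finite, $\Fil^N\mathcal{S}_k$ is a finite free $\mathfrak{D}_q$-module. All sums are therefore finite at each stage, and the $\pi_0$-adic series converge in the finitely generated $\pi_0$-adically complete modules $\Fil^N\mathcal{S}_k$ and $\Fil^{N-1}\mathcal{S}_k$. Since $\nabla^{(1)}$ is $\mathfrak{D}_q$-linear and continuous on these finite modules, passing to the limit gives the stated decomposition, with $a(r,\underline{i})\in\mathfrak{D}_q$ and $\zeta\in\Fil^{N-1}\mathcal{S}_k$.
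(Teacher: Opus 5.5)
Your proposal is correct and is exactly the standard lifting argument the paper invokes without detail: reduce modulo $\gamma^{1/mq}$, apply Theorem \ref{Theorem 3.5}, lift, divide the remainder by the uniformizer inside the finite free module $\Fil^N\mathcal{S}_k$, and iterate (equivalently, apply Nakayama). One small slip: $\nabla^{(1)}$ is not homogeneous of weight exactly $1$, since its $t\frac{d}{dt}$ part has weight $0$, but your argument only uses $\nabla^{(1)}(\Fil^{N-1}\mathcal{S}_k)\subset\Fil^{N}\mathcal{S}_k$, which does hold.
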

	Passing from $\nabla^{(1)}$ to $\nabla$, we obtain:
	\begin{lem} \label{Lemma 3.7}
		Let $d_{k}(n,m,p)=0$. Then for any $\xi\in\Fil^{N}\mathcal{S}_{k}$, there exist $a(r,\underline{i})\in\mathfrak{D}_q$, $\zeta\in\Fil^{N-1}\mathcal{S}_{k}$, and $\varpi(\textbf{r})\in\Fil^{N+\rho(\textbf{r})}\mathcal{S}_{k}$ for $\textbf{r}\in\mathcal{F}_{i}$ such that
		$$\xi=\sum_{\begin{subarray}{c}
				(r,\underline{i})\in B_{k}  \\
				W(r;\underline{i})\leq N
		\end{subarray}}a(r,\underline{i})\gamma^{\frac{(mn+1)r}{m}}t^{r}e^{\underline{i}}+\nabla(\zeta)+\sum_{i=1}^{\infty}\sum_{\textbf{r}\in\mathcal{F}_{i}}p^{\rho(\textbf{r})}\varpi(\textbf{r}).$$
	\end{lem}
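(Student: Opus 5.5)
Starting from Lemma \ref{Lemma 3.6}, we write
$$\xi=\sum_{\substack{(r,\underline{i})\in B_k\\ W(r;\underline{i})\le N}}a(r,\underline{i})\gamma^{\frac{(mn+1)r}{m}}t^{r}e^{\underline{i}}+\nabla^{(1)}(\zeta),\qquad \zeta\in\Fil^{N-1}\mathcal{S}_k .$$
Then $\nabla^{(1)}(\zeta)=\nabla(\zeta)+(\nabla^{(1)}-\nabla)(\zeta)$, and the whole task is to show that the error $(\nabla^{(1)}-\nabla)(\zeta)$ splits as $\sum_i\sum_{\mathbf r\in\mathcal F_i}p^{\rho(\mathbf r)}\varpi(\mathbf r)$ with $\varpi(\mathbf r)\in\Fil^{N+\rho(\mathbf r)}\mathcal{S}_k$. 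On the scalar factor $t\frac{d}{dt}$ the two connections agree, since $\nabla^{(1)}$ was extended by $t\frac{d}{dt}$ plus the Leibniz rule with $\nabla^{(1)}(e_l)=G_0(e_l)$. So
$$(\nabla^{(1)}-\nabla)\big(\gamma^{\frac{(mn+1)r}{m}}t^re^{\underline{i}}\big)=\gamma^{\frac{(mn+1)r}{m}}t^r\sum_{l=0}^{mn}i_l\,e_0^{i_0}\cdots e_l^{i_l-1}\cdots e_{mn}^{i_{mn}}\big(\nabla^{(1)}(e_l)-\nabla(e_l)\big),$$
where $\nabla(e_l)$ means the induced connection on $H^n(\mathcal C,D_t)$ expressed in the basis $\{e_j\}$.

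Next we insert Lemma \ref{Lemma 2.11}:
$$\nabla^{(1)}(e_l)-\nabla(e_l)=\sum_i\sum_{\mathbf r\in\mathcal F_i}\sum_j p^{\rho(\mathbf r)}b(j,l;\mathbf r)e_j,\qquad b(j,l;\mathbf r)\in\Fil^{\rho(\mathbf r)+1+\frac{l-j}{m}}\mathcal O .$$
Replacing one factor $e_l$ in $e^{\underline i}$ by $b(j,l;\mathbf r)e_j$ raises the symmetric weight by at most $\rho(\mathbf r)+1$. The weight drop $\frac{j-l}{m}$ in $\omega(\underline i)$ is compensated by the $t$-weight of $b$, because the $t$-filtration on $\mathcal O$ and the weight $\frac{(mn+1)r}{m}$ in $W$ use the same normalization. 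Filtration additivity then gives, for each $\mathbf r$, a term of the form $p^{\rho(\mathbf r)}\varpi(\mathbf r)$ with
$$\varpi(\mathbf r)\in\Fil^{(N-1)+\rho(\mathbf r)+1}\mathcal{S}_k=\Fil^{N+\rho(\mathbf r)}\mathcal{S}_k .$$
To make this step honest I first check a lemma analogous to Lemma \ref{Lemma 2.5} for $\mathcal{S}_k$: $\Fil^{d}\mathcal O\cdot\Fil^{N'}\mathcal S_k\subset\Fil^{d+N'}\mathcal S_k$, and substituting $b\,e_j$ for $e_l$ behaves as claimed. This holds because both $\gamma$-exponents and weights are additive on monomials $\gamma^{(mn+1)r/m}t^r e^{\underline i}$.

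The main obstacle is convergence and uniformity when $\zeta$ is an infinite sum in $\Fil^{N-1}\mathcal{S}_k$. The filtration pieces are finitely generated $\mathfrak D_q$-modules, since only finitely many $(r,\underline i)$ satisfy $W\le N-1$, so $\zeta$ is a finite combination and linearity suffices. Over all $\mathbf r$, however, we need $\sum_{\mathbf r}p^{\rho(\mathbf r)}\varpi(\mathbf r)$ to converge in $\mathcal S_k$. Here I use that the coefficients of $b(j,l;\mathbf r)$ lie in $\mathfrak D_q$, so $\|\varpi(\mathbf r)\|\le 1$, and that $\rho(\mathbf r)\to\infty$ as $\mathbf r$ runs over $\bigcup_i\mathcal F_i$, since each $r_j\ge1$ contributes $p^{r_j}-1\ge p-1\ge 2$. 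Finally, for each fixed $\mathbf r$ I collect the contributions from all monomials of $\zeta$ and all indices $l,j$ into a single $\varpi(\mathbf r)$, which gives exactly the stated form.
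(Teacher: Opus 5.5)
Your proposal is correct and follows essentially the same route as the paper. You start from Lemma \ref{Lemma 3.6}, use the Leibniz rule and Lemma \ref{Lemma 2.11} to rewrite $\nabla^{(1)}(\zeta)-\nabla(\zeta)$ as $\sum_{\mathbf{r}}p^{\rho(\mathbf{r})}\varpi(\mathbf{r})$, and verify the weight bound $W(r;\underline{i})+1+\rho(\mathbf{r})\le N+\rho(\mathbf{r})$. Your extra remarks on the finiteness of $\Fil^{N-1}\mathcal{S}_k$ and the convergence of the sum over $\mathbf{r}$ spell out points the paper leaves implicit.
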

	\begin{proof}
		By Lemma \ref{Lemma 3.6}, there exist $a(r,\underline{i})\in\mathfrak{D}_q$ with $(r,\underline{i})\in B_{k}$ and $W(r,\underline{i})\leq N$, and $\zeta\in\Fil^{N-1}\mathcal{S}_{k}$, such that
		$$\xi=\sum_{\begin{subarray}{c}
				(r,\underline{i})\in B_{k}  \\
				W(r;\underline{i})\leq N
		\end{subarray}}a(r,\underline{i})\gamma^{\frac{(mn+1)r}{m}}t^{r}e^{\underline{i}}+\nabla^{(1)}(\zeta).$$
		Now we write
		$$\zeta=\sum_{\begin{subarray}{c}
				(r,\underline{i})\in \mathbb{Z}_{\geq 0}\times I_{k}  \\
				W(r;\underline{i})\leq N-1
		\end{subarray}}b(r,\underline{i})\gamma^{\frac{(mn+1)r}{m}}t^{r}e^{\underline{i}}$$
		with $b(r,\underline{i})\in\mathfrak{D}_q$.
		Applying Lemma \ref{Lemma 2.11}, we obtain that
		\begin{equation*}
			\begin{aligned}
				\nabla^{(1)}(\zeta)&=t\frac{d \zeta}{dt}+\sum_{\begin{subarray}{c}
						(r,\underline{i})\in \mathbb{Z}_{\geq 0}\times I_{k}  \\
						W(r;\underline{i})\leq N-1
				\end{subarray}}b(r,\underline{i})\gamma^{\frac{(mn+1)r}{m}}t^{r}\sum_{l=0}^{mn}i_{l}e_{0}^{i_{0}}\cdot\cdot\cdot e_{l}^{i_{l}-1}\cdot\cdot\cdot e_{mn}^{i_{mn}}\nabla^{(1)}(e_{l})   \\
				&=t\frac{d\zeta}{dt}+\sum_{\begin{subarray}{c}
						(r,\underline{i})\in\mathbb{Z}_{\geq 0}\times I_{k}  \\
						W(r;\underline{i})\leq N-1
				\end{subarray}}b(r,\underline{i})\gamma^{\frac{(mn+1)r}{m}}t^{r}\sum_{l=0}^{mn}i_{l}e_{0}^{i_{0}}\cdot\cdot\cdot e_{l}^{i_{l}-1}\cdot\cdot\cdot e_{mn}^{i_{mn}}\Big(\nabla(e_{l}) \\
				&+\sum_{i=1}^{\infty}\sum_{\textbf{r}\in\mathcal{F}_{i}}\sum_{j=0}^{\min\{mn,m\rho(\textbf{r})+l+m\}}p^{\rho(\textbf{r})}b(j,l;\textbf{r})e_{j}\Big) \\
				&=\nabla(\zeta)+\sum_{i=1}^{\infty}\sum_{\textbf{r}\in\mathcal{F}_{i}}p^{\rho(\textbf{r})}\varpi(\textbf{r})
			\end{aligned}
		\end{equation*}
		where
		$$\varpi(\textbf{r})=\sum_{l=0}^{mn}\sum_{j=0}^{\min\{mn,m\rho(\textbf{r})+l+m\}}b(j,l;\textbf{r})\sum_{\begin{subarray}{c}
				(r,\underline{i})\in \mathbb{Z}_{\geq 0}\times I_{k}  \\
				W(r;\underline{i})\leq N-1
		\end{subarray}}b(r,\underline{i})\gamma^{\frac{(mn+1)r}{m}}t^{r}i_{l}e_{0}^{i_{0}}\cdot\cdot\cdot e_{l}^{i_{l}-1}\cdot\cdot\cdot e_{mn}^{i_{mn}}e_{j}$$
		with $b(j,l;\textbf{r})\in\Fil^{\rho(\textbf{r})+1+(l-j)/m}\mathcal{O}$.
		Then the weight of each summand of $\varpi(\textbf{r})$ admits an upper bound
		$$\rho(\textbf{r})+1+\frac{l-j}{m}+\frac{(mn+1)r}{m}+\omega(\underline{i})+\frac{j}{m}-\frac{l}{m}=W(r,\underline{i})+1+\rho(\textbf{r})\leq N+\rho(\textbf{r}).$$
		Hence $\varpi(\textbf{r})\in {\rm Fil}^{N+\rho(\mathbf{r})}$.
		This finishes the proof of Lemma \ref{Lemma 3.7}.

	\end{proof}
	
	\begin{thm} \label{Theorem 3.8}
		Suppose $d_{k}(n,m,p)=0$, then $H^{0}(\mathcal{S}_{k},\nabla)=0$ and $H^{1}(\mathcal{S}_{k},\nabla)$ is a free $\mathfrak{D}_q$\text{-}module of rank $\frac{m}{mn+1}\binom{mn+k}{k}$ with basis $\{\gamma^{\frac{(mn+1)r}{m}}t^{r}e^{\underline{i}}\}_{(r,\underline{i})\in B_{k}}$. Furthermore, for any $\xi\in\Fil^{N}\mathcal{S}_{k}$, there exists $C(r,\underline{i};\xi)\in\mathfrak{D}_q$ with $\ord_{p}C(r,\underline{i};\xi)\geq W(r;\underline{i})-N$, such that in $H^{1}(\mathcal{S}_{k},\nabla)$
		$$\xi=\sum_{(r,\underline{i})\in B_{k}}C(r,\underline{i};\xi)\gamma^{\frac{(mn+1)r}{m}}t^{r}e^{\underline{i}}.$$
	\end{thm}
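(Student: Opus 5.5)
We follow the pattern of the proof of Theorem \ref{Theorem 2.8}, now with Lemma \ref{Lemma 3.7} as the basic reduction step. The plan has three parts: an iterated reduction giving the representation and the coefficient bound, the vanishing of $H^{0}$, and freeness of $H^{1}$.

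\textbf{Representation and estimate.} First we iterate Lemma \ref{Lemma 3.7}. For $\xi\in\Fil^{N}\mathcal{S}_k$ it gives
\[
\xi=\sum_{B_k,\,W\le N}a(r,\underline{i})\gamma^{\frac{(mn+1)r}{m}}t^re^{\underline{i}}+\nabla(\zeta)+\sum_{i\ge1}\sum_{\textbf{r}\in\mathcal{F}_i}p^{\rho(\textbf{r})}\varpi(\textbf{r}),
\qquad \varpi(\textbf{r})\in\Fil^{N+\rho(\textbf{r})}\mathcal{S}_k .
\]
We apply the lemma again to each $\varpi(\textbf{r})$ with $N$ replaced by $N+\rho(\textbf{r})$. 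Arguing by induction on the depth $L$ exactly as in (\ref{(2.13)}), we obtain
\[
\xi=\sum_{\textbf{r}}p^{\rho(\textbf{r})}\sum_{B_k,\,W\le N+\rho(\textbf{r})}a(r,\underline{i};\textbf{r})\gamma^{\frac{(mn+1)r}{m}}t^re^{\underline{i}}+\nabla\Big(\sum_{\textbf{r}}p^{\rho(\textbf{r})}\zeta(\textbf{r})\Big)+(\text{remainder at depth }L+1).
\]
Here $\textbf{r}$ runs over concatenated index sequences, with the empty sequence having $\rho=0$. Since every $\textbf{r}\in\mathcal{F}_i$ has $\rho(\textbf{r})\ge (p-1)i$, the remainder and the tails converge $p$-adically, so we can let $L\to\infty$.

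Next we collect terms: set $C(r,\underline{i};\xi)=\sum_{\textbf{r}}p^{\rho(\textbf{r})}a(r,\underline{i};\textbf{r})$. A term with index $\textbf{r}$ contributes only when $W(r;\underline{i})\le N+\rho(\textbf{r})$, so $\ord_p$ of that contribution is at least $\rho(\textbf{r})\ge W(r;\underline{i})-N$. This gives the stated bound, exactly as at the end of Theorem \ref{Theorem 2.8}. Convergence of $C$ as an element of $\mathfrak{D}_q$ uses that only finitely many $\textbf{r}$ have $\rho(\textbf{r})$ below any given bound. Since $B_k$ is finite, the sum over $B_k$ is finite. For a general $\xi\in\mathcal{S}_k$, not lying in a single $\Fil^N$, we decompose $\xi$ into its filtration pieces; their coefficients tend to zero, so the images add up convergently. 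Hence the classes of $\gamma^{\frac{(mn+1)r}{m}}t^re^{\underline{i}}$, $(r,\underline{i})\in B_k$, generate $H^1(\mathcal{S}_k,\nabla)$.

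\textbf{Vanishing of $H^0$.} Suppose $\nabla\eta=0$ with $\eta\neq0$. Since $\mathcal{S}_k$ is torsion-free, we may divide $\eta$ by a power of the uniformizer $\gamma^{1/mq}$ until its reduction $\bar\eta$ is nonzero. Because $\nabla$ reduces to $\overline{\nabla}^{(1)}$, we get $\overline{\nabla}^{(1)}\bar\eta=0$, contradicting $H^0(\overline{\mathcal{S}}_k,\overline{\nabla}^{(1)})=0$ from Theorem \ref{Theorem 3.5}.

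\textbf{Freeness of $H^1$; the main obstacle.} We must show the generators above are linearly independent over $\mathfrak{D}_q$. Suppose $\sum_{B_k}c(r,\underline{i})\gamma^{\frac{(mn+1)r}{m}}t^re^{\underline{i}}=\nabla(\zeta)$ with not all $c(r,\underline{i})$ zero. Divide both sides by the gcd power of $\gamma^{1/mq}$ among the $c(r,\underline{i})$. To do this we need that $\zeta$ is divisible by the same power; this follows inductively from the injectivity of $\overline{\nabla}^{(1)}$ modulo $\gamma^{1/mq}$, since if the left side is divisible by $\gamma^{1/mq}$ then $\bar\zeta$ is killed by $\overline{\nabla}^{(1)}$, hence $\bar\zeta=0$. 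After this normalization, reducing modulo $\gamma^{1/mq}$ gives a nontrivial relation among the $t^r\bar e^{\underline{i}}$ in $H^1(\overline{\mathcal{S}}_k,\overline{\nabla}^{(1)})$, contradicting Theorem \ref{Theorem 3.5}.

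The delicate step is this divisibility argument. Besides injectivity mod $\gamma^{1/mq}$, it requires that $\nabla(\mathcal{S}_k)$ is $\gamma^{1/mq}$-saturated in the sense just used, and that the $\zeta$ produced by the iteration in the first part really lies in $\mathcal{S}_k$, meaning its coefficients tend to zero. Both follow from the same weight bookkeeping, using $\zeta(\textbf{r})\in\Fil^{N+\rho(\textbf{r})-1}\mathcal{S}_k$ together with the factors $p^{\rho(\textbf{r})}$.
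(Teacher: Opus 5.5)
Your proposal is correct and follows the paper's proof: you get the coefficient estimate by recursively applying Lemma \ref{Lemma 3.7} and collecting the $p^{\rho}$-weighted coefficients, exactly as the paper does, and your reduction-mod-$\gamma^{1/mq}$ arguments for $H^{0}=0$ and for linear independence spell out the ``standard lifting'' from Theorem \ref{Theorem 3.5} that the paper only invokes. One small correction to your closing remark: the $\gamma^{1/mq}$-saturation of $\nabla(\mathcal{S}_k)$ already follows from $H^{0}(\overline{\mathcal{S}}_k,\overline{\nabla}^{(1)})=0$, as your own induction shows, so no weight bookkeeping is needed there; that bookkeeping only matters for convergence of the iteration.
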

	\begin{proof}
		It follows from Theorem \ref{Theorem 3.5} together with a standard lifting that $H^{0}(\mathcal{S}_{k},\nabla)=0$ and $H^{1}(\mathcal{S}_{k},\nabla)$ is a free $\mathfrak{D}_q$\text{-}module with basis $\{\gamma^{\frac{(mn+1)r}{m}}t^{r}e^{\underline{i}}\}_{(r,\underline{i})\in B_{k}}$.
		
		Let $\xi\in\Fil^{N}\mathcal{S}_{k}$.
		By Lemma \ref{Lemma 3.7}, we have
		$$\xi=\sum_{\begin{subarray}{c}
				(r,\underline{i})\in B_{k}  \\
				W(r;\underline{i})\leq N
		\end{subarray}}a(r,\underline{i})\gamma^{\frac{(mn+1)r}{m}}t^{r}e^{\underline{i}}+\sum_{i=1}^{\infty}\sum_{\textbf{r}_{1}\in\mathcal{F}_{i}}p^{\rho(\textbf{r}_{1})}\varpi(\textbf{r}_{1})$$
		in $H^{1}(\mathcal{S}_{k},\nabla)$, where $a(r,\underline{i})\in\mathfrak{D}_q$ and $\omega(\textbf{r}_{1})\in\Fil^{N+\rho(\textbf{r}_{1})}\mathcal{S}_{k}$. Applying Lemma \ref{Lemma 3.7} to $\varpi(\textbf{r}_{1})$ we have
		$$\varpi(\textbf{r}_{1})=\sum_{\begin{subarray}{c}
				(r,\underline{i})\in B_{k}  \\
				W(r;\underline{i})\leq N+\rho(\textbf{r}_{1})
		\end{subarray}}a^{(\textbf{r}_{1})}(r,\underline{i})\gamma^{\frac{(mn+1)r}{m}}t^{r}e^{\underline{i}}+\sum_{i=1}^{\infty}\sum_{\textbf{r}_{2}\in\mathcal{F}_{i}}p^{\rho(\textbf{r}_{2})}\varpi(\textbf{r}_{1},\textbf{r}_{2})$$
		with $a^{(\textbf{r}_{1})}(r,\underline{i})\in\mathfrak{D}_q$ and $\varpi(\textbf{r}_{1},\textbf{r}_{2})\in\Fil^{N+\rho(\textbf{r}_{1})+\rho(\textbf{r}_{2})}\mathcal{S}_{k}$. Recursively applying Lemma \ref{Lemma 3.7} we get
		$$\varpi(\textbf{r}_{1},\cdot\cdot\cdot,\textbf{r}_{l})=\sum_{\begin{subarray}{c}
				(r,\underline{i})\in B_{k}  \\
				W(r;\underline{i})\leq N+\rho(\textbf{r}_{1})+\cdot\cdot\cdot+\rho(\textbf{r}_{l})
		\end{subarray}}a^{(\textbf{r}_{1},\cdot\cdot\cdot,\textbf{r}_{l})}(r,\underline{i})\gamma^{\frac{(mn+1)r}{m}}t^{r}e^{\underline{i}}+\sum_{i=1}^{\infty}\sum_{\textbf{r}_{l+1}\in\mathcal{F}_{i}}p^{\rho(\textbf{r}_{l+1})}\varpi(\textbf{r}_{1},\cdot\cdot\cdot,\textbf{r}_{l+1})$$
		with $a^{(\textbf{r}_{1},\cdot\cdot\cdot,\textbf{r}_{l})}(r,\underline{i})\in\mathfrak{D}_q$ and $\varpi(\textbf{r}_{1},\cdot\cdot\cdot,\textbf{r}_{l+1})\in\Fil^{N+\rho(\textbf{r}_{1})+\cdot\cdot\cdot+\rho(\textbf{r}_{l+1})}\mathcal{S}_{k}$.
		Then we have
		$$\xi=\sum_{(r,\underline{i})\in B_{k}}C^{(l)}(r,\underline{i};\xi)\gamma^{\frac{(mn+1)r}{m}}t^{r}e^{\underline{i}}+\sum_{i_{1},\cdot\cdot\cdot,i_{l+1}\geq 1}\sum_{\begin{subarray}{c}
				\textbf{r}_{j}\in\mathcal{F}_{i_{j}}\\
				1\leq j\leq l+1
		\end{subarray}}p^{\rho(\textbf{r}_{1})+\cdot\cdot\cdot\rho(\textbf{r}_{l+1})}\varpi(\textbf{r}_{1},\cdot\cdot\cdot,\textbf{r}_{l+1})$$
		with
		$$C^{(l)}(r,\underline{i};\xi)=\widetilde{a}(r,\underline{i})+\sum_{j=1}^{l}\sum_{i_{1},\cdot\cdot\cdot,i_{j}\geq 1}\sum_{\begin{subarray}{c}
				\textbf{r}_{\alpha}\in\mathcal{F}_{i_{\alpha}}\\
				1\leq\alpha\leq j
		\end{subarray}}p^{\rho(\textbf{r}_{1})+\cdot\cdot\cdot\rho(\textbf{r}_{j})}\widetilde{a}^{(\textbf{r}_{1},\cdot\cdot\cdot,\textbf{r}_{j})}(r,\underline{i}),$$
		where 
		\begin{equation*}
			\widetilde{a}(r,\underline{i})=\begin{cases}
				a(r,\underline{i}) & \mathrm{when}\ W(r;\underline{i})\leq N ,   \\
				0 & \mathrm{otherwise},
			\end{cases}    
		\end{equation*} and
		\begin{equation*}
			\widetilde{a}^{(\textbf{r}_{1},\cdot\cdot\cdot,\textbf{r}_{j})}(r,\underline{i})=\begin{cases}
				a^{(\textbf{r}_{1},\cdot\cdot\cdot,\textbf{r}_{j})}(r,\underline{i}) & \mathrm{when}\  W(r;\underline{i})\leq N+\rho(\textbf{r}_{1})+\cdot\cdot\cdot+\rho(\textbf{r}_{j}), \\
				0 & \mathrm{otherwise}.
			\end{cases}    
		\end{equation*}
		We see that
		${\rm ord}_{p}\widetilde{a}(r,\underline{i})\geq 0\geq W(r;\underline{i})-N$, and
		$${\rm ord}_{p}p^{\rho(\textbf{r}_{1})+\cdots\rho(\textbf{r}_{j})}\widetilde{a}^{(\textbf{r}_{1},\cdots,\textbf{r}_{j})}(r,\underline{i})\geq\rho(\textbf{r}_{1})+\cdot\cdot\cdot\rho(\textbf{r}_{j})\geq W(r;\underline{i})-N.$$
		This shows $\ord_{p}C^{(l)}(r,\underline{i};\xi)\geq W(r;\underline{i})-N$ for every $l\geq 1$. 
		
		Note that for any $\mathbf{r}=(r_1 ,\cdots, r_i , 0 , \cdots)\in\mathcal{F}_i$, $\rho(\mathbf{r})=\sum_{s=1}^{i}(p^{r_s}-1)\geq i(p-1)$, thus the terms in $C^{(l+1)}(r,\underline{i};\xi)-C^{(l)}(r,\underline{i};\xi)$ admit $p$-adic valuations at least $(l+1)(p-1)$, the series $\{C^{(l)}(r,\underline{i};\xi)\}_{l\geq 1}$ convergence $p$-adically in $\mathfrak{D}_q$. For the same reason, those error terms in the expression of $\xi$, $p^{\rho(\mathbf{r}_1)+\cdots+\rho(\mathbf{r}_{l+1})}\varpi(\mathbf{r}_1 ,\cdots, \mathbf{r}_{l+1})$, will $p$-adically tends to $0$ as $l\rightarrow+\infty$.
		
		The thereom then follows by setting $l\rightarrow+\infty$ and $C(r,\underline{i};\xi):=\lim_{l\rightarrow\infty}C^{(l)}(r,\underline{i};\xi)$.
	\end{proof}

	\section{Frobenius estimates}

	Recall that $\bar{\alpha}_{1}(t)=\Frob^{n}(\alpha_{1}(t)):H^{n}(\mathcal{C},D_{t})\rightarrow H^{n}(\mathcal{C}_{p},D_{t^{p}})$. For each $0\le i\le mn$, write $$\bar{\alpha}_{1}(t)(e_{i}):=\sum_{j=0}^{mn}\widetilde{A}(j,i)e_{j}$$
	with coefficient $\widetilde{A}(j,i)\in\mathcal{O}_{p}$. Then \cite[Corollary 3.6]{HS172} gives the following $p$-adic estimates.
	\begin{lem} \label{Lemma 2.10}
		For $0\leq i,j\leq mn$, we have $\widetilde{A}(j,i)\in\mathcal{O}_{p}(\frac{mn+1}{mp};\frac{j}{m})$.
	\end{lem}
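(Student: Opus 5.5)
Write $\bar{\alpha}_1(t)(e_i)=\sum_j \widetilde{A}(j,i)e_j$. The plan is to compute the coefficient of $e_j$ by pushing $F(t,x)e_i$ through $\psi_x$ and then reducing modulo $D_{t^p}$ with Theorem \ref{Theorem 2.8}, applied with $s=1$. No separate cohomological input is needed beyond that theorem.

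\emph{Step 1: expand $F(t,x)e_i$.} We have $\bar{\alpha}_1(t)(e_i)=p^{0}\psi_x(F(t,x)e_i)$ up to the normalization $\Frob^n$, which carries the factor $p^{n-n}=1$. Using $F=\sum_u B(u,t)x^u$ with $e_i=\gamma^{i/m}x^{w_i}$ and $\omega(w_i)=i/m$, this gives
$$\psi_x(F e_i)=\sum_{v}\gamma^{i/m}B(pv-w_i,t)\,x^{v}.$$

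\emph{Step 2: place each term in a filtration piece.} For each $v$ and each power $t^l$ with $l\ge M(pv-w_i)$, Proposition \ref{Proposition 2.2} gives
$$\ord_p B(pv-w_i;l)\ge \frac{\omega(pv-w_i)}{p-1}+\frac{(mn+1)(l-M(pv-w_i))}{m(p-1)}.$$
Write $t^l x^v=t^{r+pM(v)}x^v$ in $\mathcal{C}_p$. The term then lies in $\mathrm{Fil}^{N}\mathcal{C}_p$ with $N=W_p(r;v)=\frac{(mn+1)r}{mp}+\omega(v)$, multiplied by a scalar of valuation at least
$$\frac{\omega(pv-w_i)+\frac{mn+1}{m}(l-M(pv-w_i))}{p-1}+\frac{i}{m(p-1)}-\Big(\text{the }\gamma\text{-normalization }\tfrac{1}{p-1}W_p(r;v)\Big).$$
By the triangle inequality (Proposition \ref{weight}) we have $\omega(pv)\le\omega(pv-w_i)+\omega(w_i)$, so $p\,\omega(v)\le \omega(pv-w_i)+i/m$. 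The analogous inequality holds for $M$, via part (4) of Proposition \ref{weight}. I expect these to show that the scalar has valuation at least $\frac{p-1}{p-1}W_p(r;v)=W_p(r;v)$ beyond the normalizing $\gamma$-power. Equivalently, the coefficient of the normalized monomial of total weight $N$ has $\ord_p\ge N\cdot\frac{p-1}{p-1}$, roughly $\ge N$.

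\emph{Step 3: reduce modulo $D_{t^p}$.} Apply Theorem \ref{Theorem 2.8} with $s=1$ to each such monomial. This gives coefficients in $\mathcal{O}_p(\frac{mn+1}{mp};\frac{j}{m}-N)$. Multiplying by the scalar of valuation $\ge N$ cancels the $-N$, and summing over all $v$, $r$ (which converges, since the valuations grow) yields $\widetilde{A}(j,i)\in\mathcal{O}_p(\frac{mn+1}{mp};\frac{j}{m})$.

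\emph{Main obstacle.} The hardest step is the bookkeeping in Step 2. It must turn the growth of $B(u;l)$ into a valuation bound of at least the total weight $W_p$ of the image monomial, carefully tracking the $t$-exponent shift $p\delta'$ and the $\gamma$-normalizations of $\mathcal{O}_p$ and $\mathcal{C}_p$. If this estimate falls short by the extra factor in the $t$-variable, I would first try replacing $\gamma$-valuation bounds by total weights $W$ and exploiting $p\ge 3$. This is exactly the computation of \cite[Corollary 3.6]{HS172}, so in the worst case the lemma can simply be cited from there.
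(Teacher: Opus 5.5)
The paper gives no argument for this lemma; it simply quotes \cite[Corollary 3.6]{HS172}. Your direct route is the standard one: expand $\psi_x(Fe_i)$, normalize each term in $\mathcal{C}_p$, and reduce with Theorem~\ref{Theorem 2.8} at $s=1$. Steps 1 and 3 are fine, including the factor $p^{n-n}=1$ and the convergence of the sum over $(r,v)$.

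The gap is Step 2, which is the entire content of the lemma. You leave it as an expectation, and the justification you sketch does not work. Put $u=pv-w_i$ and $l=r+pM(v)$. By Proposition~\ref{Proposition 2.2}, the required bound $\ord_p(\gamma^{i/m}B(u;l))\ge \frac{p}{p-1}W_p(r;v)$ reduces to
\[
\frac{i}{m}+\omega(u)-\frac{mn+1}{m}M(u)\ \ge\ p\,\omega(v)-\frac{mn+1}{m}\,p\,M(v).
\]
Subadditivity of $\omega$ gives $\frac{i}{m}+\omega(u)\ge p\,\omega(v)$. Subadditivity of $M$, however, gives $M(u)\ge M(pv)-M(w_i)=pM(v)$, and $M$ enters the inequality with a minus sign. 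So the two triangle inequalities pull in opposite directions and cannot be combined. The $M$-inequality is still needed, but only to see that $l\ge M(u)\ge pM(v)$, i.e.\ that $r\ge 0$ and the term lies in $\mathcal{C}_p$.

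The missing observation is that
\[
\omega(u)-\tfrac{mn+1}{m}M(u)=\tfrac{u_1}{m}+u_2+\cdots+u_n=:L(u)
\]
is linear in $u$. Equivalently, from $\ord_p\theta_j\ge j/(p-1)$ with $k_1=(u_1+l)/m$ and $k_i=u_i+l$, one gets $\ord_p B(u;l)\ge\frac{1}{p-1}\bigl(L(u)+\frac{(mn+1)l}{m}\bigr)$. Since $M(w_i)=0$ and $L(w_i)=\omega(w_i)=i/m$, we have $L(pv-w_i)=pL(v)-\frac{i}{m}$. The displayed inequality then becomes the identity $pL(v)=pL(v)$.

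Hence $\ord_p(\gamma^{i/m}B(u;l))\ge\frac{p}{p-1}W_p(r;v)$, with no slack, and $p\ge 3$ plays no role. After removing the normalizing factor $\gamma^{W_p(r;v)}$, the scalar has valuation at least $W_p(r;v)=N$. This cancels the $-N$ in $\mathcal{O}_p(\frac{mn+1}{mp};\frac{j}{m}-N)$ from Theorem~\ref{Theorem 2.8}. With this identity inserted, your Step 3 completes the proof.
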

	For each $\underline{i}\in I_k$, we write
	$$\Sym^{k}\bar{\alpha}_{1}(t)(e^{\underline{i}})=\sum_{\underline{j}\in I_{k}}\widetilde{A}^{(k)}(\underline{j},\underline{i})e^{\underline{j}}$$
	with the coefficients $\widetilde{A}^{(k)}(\underline{j},\underline{i})\in\mathcal{O}_{p}$. 
	We obtain the $p$-adic estimate for the symmetric power Frobenius from \cite[Proposition 4.1]{HS172}.
	\begin{lem} \label{Lemma 3.9}
		For $\underline{j},\underline{i}\in I_{k}$, we have $\widetilde{A}^{(k)}(\underline{j},\underline{i})\in\mathcal{O}_{p}(\frac{mn+1}{mp};\omega(\underline{j}))$.
	\end{lem}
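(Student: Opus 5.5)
The plan is to expand $\Sym^{k}\bar{\alpha}_{1}(t)(e^{\underline{i}})=\prod_{l=0}^{mn}\bar{\alpha}_{1}(t)(e_{l})^{i_{l}}$ multinomially and read off the coefficient of each $e^{\underline{j}}$. Substituting the expansion $\bar{\alpha}_{1}(t)(e_{l})=\sum_{j}\widetilde{A}(j,l)e_{j}$ from Lemma \ref{Lemma 2.10}, we write $e^{\underline{i}}$ as an ordered product of $k$ factors $e_{l_{1}}\cdots e_{l_{k}}$ with $l_{1}\le\cdots\le l_{k}$, the index $l$ occurring $i_{l}$ times. Then
$$\Sym^{k}\bar{\alpha}_{1}(t)(e^{\underline{i}})=\sum_{(j_{1},\dots,j_{k})}\widetilde{A}(j_{1},l_{1})\cdots\widetilde{A}(j_{k},l_{k})\,e_{j_{1}}\cdots e_{j_{k}},$$
and $\widetilde{A}^{(k)}(\underline{j},\underline{i})$ is the sum of those products for which the multiset $\{j_{1},\dots,j_{k}\}$ has multiplicity vector $\underline{j}$. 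Since each summand is $\widetilde{A}(j_{1},l_{1})\cdots\widetilde{A}(j_{k},l_{k})$ with $\sum_{s}j_{s}/m=\omega(\underline{j})$, and since the integer multinomial coefficients do not decrease valuations, it suffices to prove two things. First, $\mathcal{O}_{p}(b;c)$ is closed under addition. Second, $\mathcal{O}_{p}(b;c_{1})\cdot\mathcal{O}_{p}(b;c_{2})\subset\mathcal{O}_{p}(b;c_{1}+c_{2})$ with $b=\frac{mn+1}{mp}$.

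Closure under addition is immediate. For the product, write $\zeta=\sum_{r}\zeta(r)\gamma^{(mn+1)r/(mp)}t^{r}$ and $\eta$ similarly. The coefficient of $\gamma^{(mn+1)r/(mp)}t^{r}$ in $\zeta\eta$ is $\sum_{r_{1}+r_{2}=r}\zeta(r_{1})\eta(r_{2})$, because the $\gamma$-powers add exactly. Each term has $\ord_{p}\ge br_{1}+c_{1}+br_{2}+c_{2}=br+c_{1}+c_{2}$. The infinite sums converge because $|\zeta(r)|_{p}\to 0$, so the valuation bound persists in the limit. By induction on $k$ the product lies in $\mathcal{O}_{p}(b;\sum_{s}j_{s}/m)=\mathcal{O}_{p}(b;\omega(\underline{j}))$.

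One point needs care, and I expect it to be the main (though mild) obstacle: multiplication in $\mathcal{S}_{k,t^{p}}$ is formal multiplication in the symmetric algebra of the free module $H^{n}(\mathcal{C}_{p},D_{t^{p}})$. So $e_{j_{1}}\cdots e_{j_{k}}$ is literally the basis monomial $e^{\underline{j}}$, and no further reduction in cohomology is needed. The only remaining verification is therefore the ring-closure property above, together with the convergence of the coefficient series. Alternatively, one can simply cite \cite[Proposition 4.1]{HS172}, whose proof is exactly this multinomial argument applied to the estimate of Lemma \ref{Lemma 2.10}.
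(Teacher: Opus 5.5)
Your proof is correct. You expand $\prod_{l}\bar{\alpha}_{1}(t)(e_{l})^{i_{l}}$ in the symmetric algebra and apply $\mathcal{O}_{p}(b;c_{1})\cdot\mathcal{O}_{p}(b;c_{2})\subset\mathcal{O}_{p}(b;c_{1}+c_{2})$, with $b=\frac{mn+1}{mp}$, to the estimates of Lemma \ref{Lemma 2.10}; this yields exactly $\omega(\underline{j})$, and the convolution sums $\sum_{r_{1}+r_{2}=r}$ are finite, so no convergence issue arises there. The paper gives no argument of its own and simply cites \cite[Proposition 4.1]{HS172}, whose content is this same multinomial estimate, so your route is essentially the paper's with the cited step written out.
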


			We use the $\mathfrak{D}_q$\text{-}basis $\{\gamma^{\frac{(mn+1)r}{m}}t^{r}e^{\underline{i}}\}_{(r,\underline{i})\in B_{k}}$ of $H^{1}(\mathcal{S}_{k},\nabla)$ to estimate the Frobenius action. Recall that $\beta_{k,1}=\psi_{t}\circ\Sym^{k}\bar{\alpha}_{1}(t):\mathcal{S}_{k}\rightarrow\mathcal{S}_{k}$.
			Since $\beta_{k,1}\circ\nabla=p\nabla\circ\beta_{k,1}$, it induces $\bar{\beta}_{k,1}:H^{1}(\mathcal{S}_{k},\nabla)\rightarrow H^{1}(\mathcal{S}_{k},\nabla)$. 
			For $(s,\underline{j}),(r,\underline{i})\in B_k$, we write
			$$\bar{\beta}_{k,1}(\gamma^{\frac{(mn+1)r}{m}}t^{r}e^{\underline{i}}):=\sum_{(s,\underline{j})\in B_{k}}B^{(k)}\big((s,\underline{j}),(r,\underline{i})\big)\gamma^{\frac{(mn+1)s}{m}}t^{s}e^{\underline{j}}$$
			with the coefficient $B^{(k)}\big((s,\underline{j}),(r,\underline{i})\big)\in\mathfrak{D}_q$.
			
			\begin{thm} \label{Theorem 3.10}
				Let $d_{k}(n,m,p)=0$. Then for any $(s,\underline{j}),(r,\underline{i})\in B_{k}$, ${\rm ord}_{p}B^{(k)}\big((s,\underline{j}),(r,\underline{i})\big)\geq W(s;\underline{j})$.
			\end{thm}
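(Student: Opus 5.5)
We compute $\bar\beta_{k,1}$ on a basis element $\gamma^{\frac{(mn+1)r}{m}}t^{r}e^{\underline{i}}$, $(r,\underline{i})\in B_k$, at chain level in $\mathcal{S}_k$. We then reduce the result to the basis of $H^1(\mathcal{S}_k,\nabla)$ using Theorem \ref{Theorem 3.8}. The key point is that the Frobenius output, although not lying in a fixed filtration piece, decomposes into pieces of weight $N$ carrying $p$-adic valuation at least $N$. Theorem \ref{Theorem 3.8} loses at most $N$, so the net valuation of the coefficient of a basis element of weight $W(s;\underline{j})$ is at least $W(s;\underline{j})$.

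First, Lemma \ref{Lemma 3.9} gives
\[
\Sym^k\bar\alpha_1(t)(e^{\underline{i}})=\sum_{\underline{j}\in I_k}\widetilde{A}^{(k)}(\underline{j},\underline{i})e^{\underline{j}},
\qquad
\widetilde{A}^{(k)}(\underline{j},\underline{i})=\sum_{l\ge0}A_l\,\gamma^{\frac{(mn+1)l}{mp}}t^l,
\]
with $\ord_p A_l\ge \frac{(mn+1)l}{mp}+\omega(\underline{j})$. Multiply by $\gamma^{\frac{(mn+1)r}{m}}t^{r}$. This equals $\gamma^{\frac{(mn+1)pr}{mp}}t^{r}$, which in the $\mathcal{O}_p$-normalization is the monomial $t^{r}$ with $\gamma$-exponent $(mn+1)pr/(mp)$ and coefficient a unit. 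So the coefficient of $\gamma^{\frac{(mn+1)(l+r)}{mp}}t^{l+r}$ keeps the same lower bound in terms of $l$. Then apply $\psi_t$. It retains only the exponents $l+r=p\sigma$, and the term $\gamma^{\frac{(mn+1)p\sigma}{mp}}t^{p\sigma}$ becomes $\gamma^{\frac{(mn+1)\sigma}{m}}t^{\sigma}$, correctly normalized in $\mathcal{O}$. The resulting coefficient $c(\sigma,\underline{j})$ of $\gamma^{\frac{(mn+1)\sigma}{m}}t^{\sigma}e^{\underline{j}}$ satisfies
\[
\ord_p c(\sigma,\underline{j})\ge \frac{(mn+1)(p\sigma-r)}{mp}+\omega(\underline{j})\ge W(\sigma;\underline{j})-\frac{(mn+1)}{m}\Big(\frac{r}{p}-\sigma\Big)_{\!+}\cdots .
\]
This needs care. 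Since $l=p\sigma-r\ge0$ anyway, the valuation is at least $\frac{(mn+1)(p\sigma-r)}{mp}+\omega(\underline{j})$. I would rather use the cleaner observation that the bound $\frac{(mn+1)l}{mp}\ge0$ and, more usefully, $\frac{(mn+1)l}{mp}\ge \frac{(mn+1)\sigma}{m}\cdot\frac{p-1}{p}$-type estimates as in \cite{HS24}. The clean route I would commit to is the following: show $\ord_p c(\sigma,\underline{j})\ge W(\sigma;\underline{j})-\frac{(mn+1)r}{mp}$, absorbing the $-r/p$ loss.

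Second, write $\beta_{k,1}(\cdot)=\sum_{(\sigma,\underline{j})}c(\sigma,\underline{j})\gamma^{\frac{(mn+1)\sigma}{m}}t^\sigma e^{\underline{j}}$. Apply Theorem \ref{Theorem 3.8} to each single monomial, which lies in $\Fil^{W(\sigma;\underline{j})}\mathcal{S}_k$. This expresses it in $H^1$ as $\sum_{(s,\underline{j}')\in B_k}C\,\gamma^{\frac{(mn+1)s}{m}}t^{s}e^{\underline{j}'}$ with $\ord_p C\ge W(s;\underline{j}')-W(\sigma;\underline{j})$. Multiplying the two estimates gives the contribution
\[
\ord_p\ge W(s;\underline{j}')-\tfrac{(mn+1)r}{mp}.
\]
Summing is legitimate by $p$-adic convergence, since valuations tend to infinity with $\sigma$.

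The main obstacle is the leftover $-\frac{(mn+1)r}{mp}$, which must be eliminated to reach exactly $W(s;\underline{j})$. I would first try to remove it via the freedom in choosing $B_k$. Following \cite{HS24}, take $B_k$ such that the elements with $r\ge1$ can be avoided: show that $H^1(\overline{\mathcal{S}}_k,\phi_{\overline G})$ is spanned by $\bar e^{\underline{i}}$ with $r=0$. This holds because the $t$-multiples of $\bar e_0,\dots,\bar e_{m-1}$ are images of $\overline G$ by (\ref{(2.18)}), and $Q(T)$ is a polynomial. Then $r=0$ for all basis elements and the loss vanishes. Failing that, I would bound $\ord_p$ of $\gamma^{\frac{(mn+1)r}{m}}$ against $\gamma^{\frac{(mn+1)r}{mp}}$ more carefully: the extra factor $\gamma^{\frac{(mn+1)r(p-1)}{mp}}$ has valuation $\frac{(mn+1)r}{mp}$, which exactly cancels the loss.
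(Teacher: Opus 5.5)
Once you adopt your second fallback, your argument is exactly the paper's proof. Split $\gamma^{\frac{(mn+1)r}{m}}=\gamma^{\frac{(mn+1)(p-1)r}{mp}}\gamma^{\frac{(mn+1)r}{mp}}$; the first factor has valuation $\frac{(mn+1)r}{mp}$. So after $\psi_t$ the coefficient of $\gamma^{\frac{(mn+1)\sigma}{m}}t^{\sigma}e^{\underline j}$ has valuation at least $\frac{(mn+1)r}{mp}+\frac{(mn+1)(p\sigma-r)}{mp}+\omega(\underline j)=W(\sigma;\underline j)$. Theorem~\ref{Theorem 3.8} then loses at most $W(\sigma;\underline j)-W(s;\underline j')$, giving $W(s;\underline j')$.

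The ``loss'' you struggled with comes only from the false claim in your first step. There you said $\gamma^{\frac{(mn+1)r}{m}}t^{r}$ is a unit times the normalized monomial of $\mathcal{O}_p$, but that monomial is $\gamma^{\frac{(mn+1)r}{mp}}t^{r}$. You should build the factor $\gamma^{\frac{(mn+1)(p-1)r}{mp}}$ in from the start.

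You should also drop the $r=0$ detour, for two reasons. First, it is not proved. The rank-one relation $\overline G(\bar e_{(n-1)m+j})=t\bar e_{j-1}$ does not by itself show that $t$-free monomials span the cokernel of the derivation $\phi_{\overline G}$ on $\overline{\mathcal S}_k$, because $\phi_{\overline G}$ applied to a monomial produces several other terms. Second, even if it held, it would only cover a special choice of $B_k$, whereas the statement is for any admissible $B_k$.
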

			
			\begin{proof}
				It follows from the definition and Lemma \ref{Lemma 3.9} that
				\begingroup
				\allowdisplaybreaks
				\begin{align*}
					\beta_{k,1}(\gamma^{\frac{(mn+1)r}{m}}t^{r}e^{\underline{i}})& =\psi_{t}(\sum_{\underline{i}'\in I_{k}}\gamma^{\frac{(mn+1)r}{m}}t^{r}\widetilde{A}^{(k)}(\underline{i}',\underline{i})e^{\underline{i}'}) \\
					&=\psi_{t}(\sum_{\underline{i}'\in I_{k},\ l\geq 0}\gamma^{\frac{(mn+1)(p-1)r}{mp}}\widetilde{A}^{(k)}(\underline{i}',\underline{i};l)\gamma^{\frac{(mn+1)(l+r)}{mp}}t^{l+r}e^{\underline{i}'}) \\
					&=\psi_{t}(\sum_{\underline{i}'\in I_{k},\ l\geq r}\gamma^{\frac{(mn+1)(p-1)r}{mp}}\widetilde{A}^{(k)}(\underline{i}',\underline{i};l-r)\gamma^{\frac{(mn+1)l}{mp}}t^{l}e^{\underline{i}'}) \\
					&=\sum_{\underline{i}'\in I_{k},\ l\geq \lceil\frac{r}{p}\rceil}\gamma^{\frac{(mn+1)(p-1)r}{mp}}\widetilde{A}^{(k)}(\underline{i}',\underline{i};pl-r)\gamma^{\frac{(mn+1)l}{m}}t^{l}e^{\underline{i}'},
				\end{align*}
				\endgroup
				where $\widetilde{A}^{(k)}(\underline{i}',\underline{i})=\sum_{l\geq 0}\widetilde{A}^{(k)}(\underline{i}',\underline{i};l)\gamma^{\frac{(mn+1)l}{mp}}t^{l}$. Lemma \ref{Lemma 3.9} shows that $\widetilde{A}^{(k)}(\underline{i}',\underline{i})$ is a well-defined element in $\mathcal{O}_{p}(\frac{mn+1}{mp};\omega(\underline{i}'))$ with $\ord_{p}\widetilde{A}^{(k)}(\underline{i}',\underline{i};l)\geq\frac{(mn+1)l}{mp}+\omega(\underline{i}')$. Note that $\gamma^{\frac{(mn+1)l}{m}}t^{l}e^{\underline{i}'}\in\Fil^{W(l;\underline{i}')}\mathcal{S}_{k}$. By Theorem \ref{Theorem 3.8} we have
				$$\gamma^{\frac{(mn+1)l}{m}}t^{l}e^{\underline{i}'}=\sum_{(s,\underline{j})\in B_{k}}C(s,\underline{j};l,\underline{i}')\gamma^{\frac{(mn+1)s}{m}}t^{s}e^{\underline{j}}\ \ \mathrm{mod}\ \nabla$$
				with ${\rm ord}_{p}C(s,\underline{j};l,\underline{i}')\geq W(s;\underline{j})-W(l;\underline{i}')$.
				Then
				$$B^{(k)}\big((s,\underline{j}),(r,\underline{i})\big)=\sum_{\underline{i}'\in I_{k},\ l\geq \lceil\frac{r}{p}\rceil}\gamma^{\frac{(mn+1)(p-1)r}{mp}}\widetilde{A}^{(k)}(\underline{i}',\underline{i};pl-r)C(s,\underline{j};l,\underline{i}').$$
				Theorem \ref {Theorem 3.10} holds from
				\begingroup
				\allowdisplaybreaks
				\begin{align*}
					{\rm ord}_{p}&\gamma^{\frac{(mn+1)(p-1)r}{mp}}\widetilde{A}^{(k)}(\underline{i}',\underline{i};pl-r)C(s,\underline{j};l,\underline{i}') \\ &\geq\frac{(mn+1)r}{mp}+\frac{(mn+1)(pl-r)}{mp}+\omega(\underline{i}')+W(s;\underline{j})-W(l;\underline{i}')=W(s;\underline{j}).
				\end{align*}
				\endgroup
				
			\end{proof}
			
			\begin{thm} \label{thm0110}
				Suppose $p\nmid m(mn+1)$ and $d_{k}(n,m,p)=0$. Then $L({\rm Sym}^{k}Kl_{n,m}/\mathbb{F}_{q},T)$ is a polynomial in $1+T\mathbb{Z}[\zeta_{p}][T]$ of degree at most $\frac{m}{mn+1}\binom{mn+k}{mn}$, and its $q$-adic Newton polygon lies on or above the $q$-adic Newton polygon of $\prod(1-q^{i/m}T)^{h_{i}(n,m,k)}$.
			\end{thm}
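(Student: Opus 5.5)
Starting from the cohomological formula (\ref{(3.2)}), by Theorem \ref{Theorem 3.8} we have $H^{0}(\mathcal{S}_{k},\nabla)=0$ under $d_k(n,m,p)=0$. The denominator is therefore $1$, and
$$L({\rm Sym}^{k}Kl_{n,m}/\mathbb{F}_{q},T)=\det(1-\bar{\beta}_{k,a}T\mid H^{1}(\mathcal{S}_{k},\nabla)).$$
Again by Theorem \ref{Theorem 3.8}, $H^{1}(\mathcal{S}_{k},\nabla)$ is free over $\mathfrak{D}_q$ of rank $\frac{m}{mn+1}\binom{mn+k}{k}=\frac{m}{mn+1}\binom{mn+k}{mn}$, with basis indexed by $B_k$. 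Hence the $L$-function is a polynomial of degree at most this rank with constant term $1$. To see that its coefficients lie in $\mathbb{Z}[\zeta_p]$, I will use the definition of the $L$-function as a product over closed points, with reciprocal roots in $\mathbb{Z}[\zeta_p]$. This makes it a power series in $1+T\mathbb{Z}[\zeta_p][[T]]$. Being also a polynomial over $\overline{\mathbb{Q}}_p$, its coefficients lie in $\mathbb{Z}[\zeta_p]$.

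For the Newton polygon, I plan the standard Dwork semilinear-to-linear reduction. By Lemma \ref{Lemma 3.1}, $\beta_{k,a}=\beta_{k,1}^{a}$, and $\beta_{k,1}$ is $\sigma^{-1}$-semilinear over $\mathfrak{D}_q$, with $\sigma$ the Frobenius acting on $\mathbb{Q}_q$ and fixing $\gamma^{1/mq}$ suitably. Let $B=(B^{(k)}((s,\underline{j}),(r,\underline{i})))$ be the matrix of $\bar{\beta}_{k,1}$ in the basis $\{\gamma^{(mn+1)r/m}t^re^{\underline{i}}\}_{(r,\underline{i})\in B_k}$. The matrix of $\bar{\beta}_{k,a}$ is then $B\,B^{\sigma^{-1}}\cdots B^{\sigma^{-(a-1)}}$. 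By Theorem \ref{Theorem 3.10}, row $(s,\underline{j})$ of $B$ is divisible by $p^{W(s;\underline{j})}$, and conjugates by $\sigma$ preserve this. Thus $B=\Lambda B'$ with $\Lambda=\mathrm{diag}(p^{W(s;\underline{j})})$ (interpreted via a fixed root $p^{1/m}$, since $W\in\frac1m\mathbb{Z}$) and $B'$ integral.

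The product is the key step. A direct bound on the product matrix only gives row divisibility $p^{W}$, not $q^{W}$. I would instead use the standard trick of viewing $\bar{\beta}_{k,1}$ as a $\mathfrak{D}_p$-linear map, where $\mathfrak{D}_p$ is the ring of integers of $\mathbb{Q}_p(\gamma^{1/mq})$. Its matrix is $B$ expanded along a $\mathbb{Z}_p$-basis of $\mathbb{Z}_q$, of size $a\cdot\#B_k$. Each basis vector indexed by $(s,\underline{j})$ appears $a$ times, and each of these rows is still divisible by $p^{W(s;\underline{j})}$.

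From the row divisibility I get the Hodge bound for the linear map. The $\ell$-th elementary symmetric function of the eigenvalues is a sum of $\ell\times\ell$ principal minors, and each minor has valuation at least the sum of the $\ell$ smallest row weights. So the $p$-adic Newton polygon of $\det(1-\bar{\beta}_{k,1}T)$ over $\mathfrak{D}_p$ lies above the polygon with slopes $W(s;\underline{j})$, each of multiplicity $a$.

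The classical lemma $\det(1-\bar\beta_{k,a}T^{a}\mid\mathfrak{D}_q)=\prod_{\zeta^a=1}\det(1-\zeta\bar\beta_{k,1}T\mid\mathfrak{D}_p)$ then converts this into the statement that the $q$-adic Newton polygon of $\det(1-\bar\beta_{k,a}T)$ lies above the polygon with slopes $W(s;\underline{j})$, $(s,\underline{j})\in B_k$. By Remark \ref{rmk4.5}, $\#\{(s,\underline{j})\in B_k: W(s;\underline{j})=i/m\}=h_i(n,m,k)$, so this is exactly the Newton polygon of $\prod(1-q^{i/m}T)^{h_i(n,m,k)}$.

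The main obstacle I expect is handling the semilinearity cleanly, that is, the passage from the $\sigma$-semilinear $\bar{\beta}_{k,1}$ to the linear $a$-fold composite. In particular I must check that the Frobenius twist on coefficients in $\mathfrak{D}_q$ does not disturb the valuations from Theorem \ref{Theorem 3.10}. This is clear since $\sigma$ is an isometry. I also need to be careful with fractional exponents of $\gamma$, which contribute only to the valuation, not to the semilinearity.
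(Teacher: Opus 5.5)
Your proposal is correct and is essentially the paper's proof. Theorem~\ref{Theorem 3.8} gives $H^{0}(\mathcal{S}_k,\nabla)=0$, freeness of $H^{1}$ and the degree bound. Theorem~\ref{Theorem 3.10} gives the row divisibility, hence via principal minors the Hodge bound for $\bar\beta_{k,1}$. Then $\det(1-\bar\beta_{k,1}^{a}T^{a})=\prod_{\zeta^{a}=1}\det(1-\zeta\bar\beta_{k,1}T)$ turns $p$-adic slopes of $\bar\beta_{k,1}$ into $q$-adic slopes of $\bar\beta_{k,a}$, and Remark~\ref{rmk4.5} matches them with the $h_i(n,m,k)$.

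The semilinearity you anticipate does not occur in this setup. $F(t,x)$ has coefficients in $\mathbb{Z}_p[\gamma]$ and $\psi_x,\psi_t$ act only on exponents, so $\beta_{k,1}$ is $\mathfrak{D}_q$-linear, as the paper states. The paper therefore applies the product identity directly over $\mathfrak{D}_q$, with each weight counted once. Your restriction of scalars to $\mathfrak{D}_p$ is harmless, but then the identity must read
$$\det(1-\bar\beta_{k,a}T^{a}\mid\mathfrak{D}_q)^{a}=\prod_{\zeta^{a}=1}\det(1-\zeta\bar\beta_{k,1}T\mid\mathfrak{D}_p).$$
As you wrote it, without the exponent $a$, the degrees in $T$ of the two sides do not match. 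With this correction all multiplicities scale by the same factor, and your conclusion follows.

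Your argument for $\mathbb{Z}[\zeta_p]$-integrality, which the paper leaves implicit, is fine once phrased correctly. What lies in $\mathbb{Z}[\zeta_p]$ are the coefficients of each Euler factor, which are symmetric functions of the $\pi_j(\bar t)$, not the reciprocal roots themselves.
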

			\begin{proof}
				By Lemma \ref{Lemma 3.1} we know $\bar{\beta}_{k,1}^{a}=\bar{\beta}_{k,a}$. Since $d_{k}(n,m,p)=0$, (\ref{(3.2)}) and Theorem \ref{Theorem 3.8} show that $$L({\rm Sym}^{k}Kl_{n,m}/\mathbb{F}_{q},T)=\det(1-\bar{\beta}_{k,a}T|H^{1}(\mathcal{S}_{k},\nabla)).$$
				It follows from Theorem \ref{Theorem 3.8} that  $L({\rm Sym}^{k}Kl_{n,m}/\mathbb{F}_{q},T)$ is a polynomial of degree at most $\frac{m}{mn+1}\binom{mn+k}{mn}$.

				On the other hand, we have that 
				\begin{align}\label{eq44}
					\det(1-\bar{\beta}_{k,a}T^a|H^{1}(\mathcal{S}_k,\nabla))&=\det (1-\bar{\beta}_{k,1}^aT^a|H^{1}(\mathcal{S}_k,\nabla))\\\nonumber
					&=\prod_{\zeta^a=1} \det (1-\zeta\bar{\beta}_{k,1}T|H^{1}(\mathcal{S}_k,\nabla)).
				\end{align}
				Let $m_i$ denote the number of reciprocal roots of $\det (1-\bar{\beta}_{k,1}T|H^{1}(\mathcal{S}_k,\nabla))$ which has slope $s_i$. Then by (\ref{eq44}) we have that $\det(1-\bar{\beta}_{k,a}T|H^{1}(\mathcal{S}_k,\nabla))$ has $m_i$ reciprocal roots of $q$-adic slope $s_i$.
				Hence Theorem \ref{thm0110}  follows from Theorem \ref{Theorem 3.10} and Dwork's classical argument \cite[Section 7]{Dw64}.
			\end{proof}

			\begin{rmk}
				Our main result asserts only the degree of $L({\rm Sym}^{k}Kl_{n,m}/\mathbb{F}_{q},T)$ is at most $\frac{m}{mn+1}\binom{nm+k}{k}$, since we are unable to prove $\bar{\beta}_{k,a}$ has full rank on $H^1 (\mathcal{S}_k , \nabla)$. We also do not know whether it is always a polynomial without the condition $d_k (n,m,p)=0$. However, as in the case $Kl_{n,1}$ studied in \cite{FW1}, it is reasonable to predict the degree of the rational function $L({\rm Sym}^{k}Kl_{n,m}/\mathbb{F}_{q},T)$(degree of the numerator subtract the degree of the denominator) is 
				$$\frac{m}{mn+1}\left(\binom{mn+k}{k}-d_{k}(n,m,p)\right)$$
				given that $p\nmid m(mn+1)$. This will confirm the equality of the degree in our main result. Further problems include the determination of trivial factors and functional equations, as well as a potential uniform lower bounds of the Newton polygon without the condition $d_k(n,m,p)=0$.
			\end{rmk}
			
			\bibliographystyle{amsplain}

\begin{thebibliography}{10}
				\bibitem{AS1} A. Adolphson and S. Sperber, Exponential sums and Newton
				polyhedra: cohomology and estimates, {\it Ann. of Math.} {\bf 130} (1989), 367-406.
				\bibitem{Dw64} B. Dwork, On the zeta function of a hypersurface.II, {\it Ann. of Math.} {\bf 80} (1964),227-299.
				
				\bibitem{FSY}J. Fres\'{a}n, C. Sabbah, and J.-D. Yu, Hodge theory of Kloosterman connections, {\it Duke Math. J.} {\bf171} (2022), 1649-1747.
				
				
				\bibitem{FW1} L. Fu and D. Wan, $L$-functions for symmetric products of Kloosterman sums, {\it J. Reine Angew. Math.}    {\bf589} (2005),
				79-103.
				
				\bibitem{FW2} L. Fu and D. Wan, $L$-functions of symmetric products of the Kloosterman sheaf over $\mathbf{Z}$, {\it Math. Ann.} {\bf 342} (2008), 387-404.
				
				\bibitem{FW3} L. Fu and D. Wan, Trivial factors for $L$-functions of symmetric products of Kloosterman sheaves, {\it Finite Fields Appl.} {\bf 14} (2008), 549-570.
				
				\bibitem{FW23} L. Fu and P. Li, D. Wan, H. Zhang, $p$-adic GKZ hypergeometric complex, Math. Ann. {\bf 387} (2023), no.~3-4, 1629--1689; MR4657433
				
				\bibitem{H14} C. D. Haessig, Meromorphy of the rank one unit root $L$-function revisited, {\it Finite Fields Appl.} {\bf 30} (2014), 191-202.
				
				\bibitem{H17} C. D. Haessig,  $L$-functions of symmetric powers of Kloosterman sums (unit root $L$-functions and $p$-adic estimates), {\it Math. Ann.} {\bf 369} (2017), 17-47.
				
				\bibitem{HS172} C. D. Haessig and S. Sperber, Symmetric power $L$-functions for families of generalized Kloosterman sums, {\it Trans. Amer. Math. Soc.} {\bf 369} (2017), 1459-1493.
				
				\bibitem{HS24} C. D. Haessig and S. Sperber, Symmetric Power $L$-functions of the hyper-Kloosterman Family,arXiv:2402.13051.
				
				\bibitem{Qin24}Y. Qin, Hodge numbers of motives attached to Kloosterman and Airy moments, {\it J. Reine Angew. Math.} {\bf 808} (2024), 143-192.
				
				\bibitem{Ro86}  P. Robba, Symmetric powers of the $p$-adic Bessel equation, {\it J. Reine Angew. Math.} {\bf 366} (1986), 194-220.
				
				
				\bibitem{WY} C.L. Wang and L.P. Yang, Newton polygons for $L$-functions of generalized kloosterman sums, {\it Forum Math.}
				34 (2022), 77-96.
				
				
				
			\end{thebibliography}
			
		\end{document}